\newcommand{\HE}{Name of Handling Editor}
\newcommand{\DoS}{Month/Day/Year}
\newcommand{\DoA}{Month/Day/Year}
\newcommand{\CA}{Name of Corresponding Author}
\newcommand{\Names}{Franz Achleitner, Anton Arnold, and Volker Mehrmann}
\newcommand{\Title}{Hypocoercivity and hypocontractivity concepts for linear dynamical systems}
\newtheorem{remark}[theorem]{Remark}
\newtheorem{example}[theorem]{Example}
\newenvironment{breakablealgorithm}
  {
   \begin{center}
     \refstepcounter{algorithm}
     \hrule height.8pt depth0pt \kern2pt
     \renewcommand{\caption}[2][\relax]{
       {\raggedright\textbf{\fname@algorithm~\thealgorithm} ##2\par}%
       \ifx\relax##1\relax 
         \addcontentsline{loa}{algorithm}{\protect\numberline{\thealgorithm}##2}%
       \else 
         \addcontentsline{loa}{algorithm}{\protect\numberline{\thealgorithm}##1}%
       \fi
       \kern2pt\hrule\kern2pt
     }
  }{
     \kern2pt\hrule\relax
   \end{center}
  }
\newcommand{\C}{\mathbb{C}}
\newcommand{\N}{\mathbb{N}}
\newcommand{\R}{\mathbb{R}}
\newcommand{\Cnn}{\C^{n\times n}}
\newcommand{\Cn}{\C^n}
\newcommand{\bigO}{{\mathcal{O}}}
\newcommand{\dd}[1][x]{\,\operatorname{d}\!#1}
\newcommand{\ddt}{\frac{\dd[]}{\dd[t]}}
\newcommand{\mA}{\mathbf{A}}
\newcommand{\mB}{\mathbf{B}}
\newcommand{\mC}{\mathbf{C}}
\newcommand{\mD}{\mathbf{D}}
\newcommand{\mE}{\mathbf{E}}
\newcommand{\mI}{\mathbf{I}}
\newcommand{\mJ}{\mathbf{J}}
\newcommand{\mP}{\mathbf{P}}
\newcommand{\mQ}{\mathbf{Q}}
\newcommand{\mR}{\mathbf{R}}
\newcommand{\mS}{\mathbf{S}}
\newcommand{\mT}{\mathbf{T}}
\newcommand{\mU}{\mathbf{U}}
\newcommand{\mV}{\mathbf{V}}
\newcommand{\mW}{\mathbf{W}}
\newcommand{\tA}{\widetilde \mA}
\newcommand{\tB}{\widetilde \mB}
\newcommand{\tQ}{\widetilde \mQ}
\newcommand{\tU}{\widetilde \mU}
\newcommand{\tR}{\widetilde \mR}
\newcommand{\tSigma}{\widetilde \Sigma}
\newcommand{\hB}{\widehat \mB}
\newcommand{\hD}{\widehat \mD}
\newcommand{\mAH}{\mA_H}
\newcommand{\mAS}{\mA_S}
\newcommand{\mBH}{\mB_H}
\newcommand{\mBS}{\mB_S}
\newcommand{\mBI}{\mB^{-1}}
\newcommand{\mBIH}{(\mB^{-1})_H}
\newcommand{\mCH}{\mC_H}
\newcommand{\mDHC}{m_{dHC}}
\newcommand{\mDSHC}{m_{dSHC}}
\newcommand{\mHC}{m_{HC}}
\newcommand{\mSHC}{m_{SHC}}
\newcommand{\mHCmBI}{\widetilde{m}_{HC}}
\newcommand{\lambdaMax}[1]{\lambda_{\max}^{#1}}
\newcommand{\lambdaMinAH}{\lambda_{\min}^{\mAH}}
\newcommand{\lambdaMaxAH}{\lambda_{\max}^{\mAH}}
\newcommand{\lambdaMinBH}{\lambda_{\min}^{\mBH}}
\newcommand{\sigmaMaxA}{\sigma_{\max}(\mAd)} 
\newcommand{\sigmaMax}[1]{\sigma_{\max}(#1)}
\newcommand{\mAc}{\mathbf{A}_c} 
\newcommand{\mAd}{\mathbf{A}_d} 
\newcommand{\tAd}{\tA_d} 
\newcommand{\mPd}{\mathbf{P}_d}
\newcommand{\mQd}{\mathbf{Q}_d}
\newcommand{\mUd}{\mathbf{U}_d}
\newcommand{\topH}{{\mathsf{H}}} %
\newcommand{\ip}[2]{\langle {#1}\ ,\, {#2} \rangle}
\DeclareMathOperator{\kernel}{ker}
\newcommand{\bx}{x}
\newcommand{\bv}{v} 
\begin{document}

\bibliographystyle{abbrv}

\setcounter{page}{1}

\thispagestyle{empty}

 \title{\Title\thanks{Received
 by the editors on \DoS.
 Accepted for publication on \DoA.
 Handling Editor: \HE. Corresponding Author: \CA}}

\author{Franz Achleitner\thanks{Technische Universit\"at Wien, Institute of Analysis and Scientific Computing, Wiedner Hauptstra\ss{}e 8-10, A-1040 Wien, Austria, franz.achleitner@tuwien.ac.at},
\and
Anton Arnold\thanks{Technische Universit\"at Wien, Institute of Analysis and Scientific Computing, Wiedner Hauptstr\ss{}e 8-10, A-1040 Wien, Austria, anton.arnold@tuwien.ac.at},
\and
Volker Mehrmann\thanks{Technische Universit\"at Berlin, Institut f.~Mathematik, MA 4-5, Stra\ss{}e des 17.~Juni 136, D-10623 Berlin, mehrmann@math.tu-berlin.de}
} 


\markboth{\Names}{\Title}

\maketitle

\begin{abstract}
For linear dynamical systems (in continuous-time and discrete-time) we revisit and extend the concepts of hypocoercivity and hypocontractivity and give a detailed analysis of the relations of these concepts to (asymptotic) stability, as well as (semi-)dissipativity and (semi-)\-contractivity, respectively.
On the basis of these results, the short-time behavior of the propagator norm for  linear continuous-time and discrete-time systems is characterized by the (shifted) hypocoercivity index and the (scaled) hypocontractivity index, respectively.
\end{abstract}

\begin{keywords}
semi-dissipative Hamiltonian ODEs, hypocoercivity (index), semi-contractive systems, 
hypocontractivity (index), Cayley transformation
\end{keywords}

\begin{AMS}
34D30,  
37M10, 
93D05, 
93D20 
\end{AMS}



\section{Introduction}
\label{sec:introduction}

In this paper we discuss different concepts that characterize the short and long time behavior of linear continuous-time ordinary differential equations (ODEs)
\begin{equation}\label{ODE:B}
  \bx'(t) =\mAc \bx(t) =-\mB \bx(t)\,,\quad
  \bx(0)=\bx^0, \quad
  t\geq 0,
\end{equation}
and discrete-time difference equations (DDEs)
\begin{equation}\label{DDE:B}
  \bx_{k+1} = \mAd\bx_{k}\,,\quad
  \bx_0=\bx^0, \quad
  k\in\N_0,
\end{equation}
with matrices $\mAc,\mAd\in\Cnn$.

It is well-known that the long-time behavior of solutions of \eqref{ODE:B} and \eqref{DDE:B} can be characterized via the spectral properties of the matrices $\mAc,\mAd$ or the solutions of Lyapunov equations \cite{Adr95,HiPr10,LanT85,LaS86}.
To understand the short-time behavior of continuous-time systems much progress has recently been made for systems with a semi-dissipative structure, i.e. systems where $\mA_c$ has a semidefinite symmetric part. For this subclass it has recently been observed in \cite{AAC22,AAM21} that the short- and long-time behavior  can be characterized via the concept of hypocoercivity and the hypocoercivity index. For this subclass also the analysis of the long-time behavior becomes simpler and more elegant.

In this paper we show that a similar concept of \emph{hypocontractivity and a hypocontractivity index} is analogously available in the discrete-time case and that it can be characterized via the polar decomposition of $\mAd$.

For both, the continuous- and discrete-time we present a systematic review and analysis of the different concepts and show the subtle differences and similarities to the classical spectral concepts and illustrate these with numerous examples.
Furthermore, we present the close relationship of these concepts to classical controllability and observability concepts in control theory.

Note that we switch in the discussion of~\eqref{ODE:B} between the classical notation with $\mAc$ as is common in dynamical systems and the notation with $-\mB$ as is common in evolution equations.

In Section~\ref{sec:recapctsys} we recall the concepts of (asymptotic) stability, (semi-)dissipativity, and hypocoercivity for con\-tin\-u\-ous-time systems that have been discussed in~\cite{AAM21}.
To better understand the decay behavior of solutions we extend the concept of hypocoercivity to \emph{shifted hypocoercivity}.
We also show under which linear transformations of the system these properties stay invariant.

In the second part of the paper, in Section~\ref{sec:Stability+ODEs:discrete-time} we derive the corresponding results for discrete-time systems and, in particular, analyze the relation between (asymptotic) stability, (semi-)contractivity, and hypocontractivity as well as \emph{scaled hypocontractivity}.

The third part in Section~\ref{sec:dtct} studies how the discussed properties are related under Cayley transformations that map between continuous-time and discrete-time systems. We show that many  properties including the hypocoercivity index and hypocontractivity index map appropriately. However, in general, the shifted hypocoercivity and scaled hypocontractivity indices are not mapped into each other.
Computationally feasible staircase forms to check hypocoercivity for accretive matrices and hypocontractivity for semi-contractive matrices, and to determine the associated indices are discussed in the Appendix.

We use the following notation:
The conjugate transpose of a matrix $\mC\in\Cnn$ is denoted by $\mC^\topH$.
Positive definiteness (semi-definiteness) of a Hermitian matrix $\mC$ 
is denoted by $\mC>0$ ($\mC\geq 0$).

\section{Stability, semi-dissipativity, and hypocoercivity for continuous-time systems}\label{sec:recapctsys}
In this section we recall some properties of linear continuous-time systems and their relationship.
Let us give a simplified definition of stability, for the general definition see e.g. \cite{Adr95,HiPr10}.

\begin{definition}\label{def:stable}
The trivial solution $x\equiv 0$ of \eqref{ODE:B} is called \emph{(Lyapunov) stable} if all solutions of~\eqref{ODE:B} are bounded for $t\geq 0$, and it is called \emph{asymptotically stable} if it is stable and all solutions of~\eqref{ODE:B} converge to $0$ for $t\to \infty$.
\end{definition}

For linear systems~\eqref{ODE:B} a solution is (asymptotically) stable if and only if the trivial solution $x\equiv 0$ is (asymptotically) stable. Therefore, if the trivial solution $x\equiv 0$ of~\eqref{ODE:B} is (asymptotically) stable, then we call the system~\eqref{ODE:B} \emph{(asymptotically) stable}.
%

It is well-known, see e.g.\ \cite{Adr95,HiPr10}, that~\eqref{ODE:B} is \emph{(Lyapunov) stable} if all eigenvalues of~$\mAc$ have non-positive real part and the eigenvalues on the imaginary axis are semi-simple, and it is \emph{asymptotically stable} if all eigenvalues of~$\mAc$ have negative real part.

A  concept closely related to stability is that of (semi-)dissipativity.
Writing $\mAc$ as the sum of its Hermitian part $\mAH :=(\mAc +\mAc^\topH)/2$ and skew-Hermitian part $\mAS :=(\mAc -\mAc^\topH)/2$, we have the following definition, \cite[Definition 4.1.1]{Be18}.
\begin{definition} \label{def:semiDissipative}
A matrix~$\mAc\in\Cnn$ is called~\emph{dissipative} (resp.~\emph{semi-dissipative}) if the Hermitian part~$\mAH$ is negative definite (resp.~negative semi-definite).
For a (semi-)dissipative matrix~$\mAc\in\Cnn$, the associated ODE~\eqref{ODE:B} is called \emph{(semi-)dissipative Hamiltonian ODE}.
Alternatively, 
a matrix~$\mB=-\mAc\in\Cnn$ is called \emph{accretive} (or \emph{positive semi-dissipative}) if its Hermitian part~$\mBH$ is positive semi-definite.
\end{definition}

An nice property of a
 semi-dissipative Hamiltonian ODE~\eqref{ODE:B} is that it is (Lyapunov) stable, since for all solutions of~\eqref{ODE:B} we have
%
\begin{equation} \label{energy:estimate}
\ddt \|x(t)\|^2
= \ip{\mAc x(t)}{x(t)} +\ip{x(t)}{\mAc x(t)}
= \ip{x(t)}{(\mAc^\topH +\mAc)x(t)}
\leq 0 ,
\quad t\geq 0 ,
\end{equation}
i.e.\ the Euclidean norm (which may serve as a \emph{Lyapunov function}), is non-increasing.

The converse is in general not true,
because the Hermitian part of a matrix~$\mAc$ associated with a stable system~\eqref{ODE:B} does not have to be negative semi-definite, as the following example shows:
\begin{example}\label{ex:non-coerciveB}{\rm
Consider the matrix
\begin{equation*} 
 \mB = \begin{bmatrix} 3 & 3 \\ -3 & -1 \end{bmatrix}
\end{equation*}
so that~$\mAc=-\mB$ has eigenvalues $\lambda=-1\pm i\sqrt5$, but the Hermitian part~$\mAH$ is indefinite with eigenvalues $\lambdaMinAH=1$ and $\lambdaMaxAH=-3$.
Hence, the norm of solutions of~\eqref{ODE:B} may increase initially at the rate $e^t$.
}
\end{example}

\begin{remark}[Logarithmic Norm] \label{remark.logarithmic.norm}
Since the flow generated by~\eqref{ODE:B} is given by the matrix exponential~$e^{\mAc t}$, the long-time behavior of the propagator norm $\|e^{\mAc t}\|$, or to be precise---its exponential rate---is determined by the \emph{spectral abscissa}
\begin{equation} \label{1.spectral.abscissa}
 \alpha(\mAc)
:= \max\{\Re(\lambda)\ |\ \text{$\lambda$ is an eigenvalue of $\mAc$}\} \,,
\end{equation}
see e.g.~\cite{vL77}.
\newline\indent
In contrast, the exponential rate of the short-time behavior of~$\|e^{\mAc t}\|$ is determined by the logarithmic norm:
The \emph{logarithmic norm} of a matrix $\mAc\in\C^{n\times n}$ with respect to an inner product is defined as
\begin{equation} \label{1.logarithmic.norm}
 \mu(\mAc)
:= \sup_{\|x\|=1} \Re (\ip{x}{\mAc x} )
 = \max_{\|x\|=1} \Re (\ip{x}{\mAc x}) \ ,
\end{equation}
i.e. $\mu(\mAc)$ is the maximal real part of the \emph{numerical range of~$\mAc$}.
Thus, the solutions~$x(t)$ of~\eqref{ODE:B} satisfy
$\ddt \|x(t)\|^2 = \ip{x}{(\mAc^\topH+\mAc)x}
\leq 2\mu(\mAc)\ \|x(t)\|^2$,
which implies  that
\begin{equation} \label{ODE:short-t} \|x(t)\| \leq e^{\mu(\mAc)\ t}\|x^0\| \quad \text{for } t\geq 0 \ .
\end{equation}
In particular, a matrix~$\mAc\in\Cnn$ is semi-dissipative if and only if $\mu(\mAc)\leq 0$.
\end{remark}

A third related concept is that of hypocoercivity for matrices and the associated hypocoercivity index, which was introduced originally in the context of linear operators see~\cite{AAC18,ArEr14,Vi09}.
\begin{definition}[{Definition 2.5 of~\cite{AAC22}}] \label{def:matrix:hypocoercive}
A matrix $\mC\in\Cnn$ is called~\emph{coercive} (or \emph{strictly accretive}) if its Hermitian part $\mCH$ is positive definite, and it is called~\emph{hy\-po\-coercive} if the spectrum of~$\mC$ lies in the \emph{open} right half plane.
A matrix~$\mAc\in\Cnn$ is called \emph{negative hypocoercive} if the spectrum of~$\mAc$ lies in the \emph{open} left half plane.
\end{definition}
%

The relationship between positive semi-dissipativity and hypocoercivity is characterized by the following result.
%
\begin{proposition}[{\cite[Lemma 3.1]{MMS16}, \cite[Lemma 2.4 with Proposition~1(B2), (B4)]{AAC18}}] \label{prop:border}
Let $\mB\in\Cnn$ be (positive) semi-dissipative.
Then, $\mB$ has an eigenvalue on the imaginary axis if and only if $\mBH v =0$ for some eigenvector~$v$ of~$\mBS$. 
\end{proposition}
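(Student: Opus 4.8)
The plan is to work directly with the additive splitting $\mB=\mBH+\mBS$ into its positive semi-definite Hermitian part $\mBH\geq 0$ and its skew-Hermitian part $\mBS$, using the two elementary facts that $\ip{v}{\mBH v}$ is real and nonnegative, while $\ip{v}{\mBS v}$ is purely imaginary (the latter because $\overline{\ip{v}{\mBS v}}=\ip{\mBS v}{v}=-\ip{v}{\mBS v}$). Since $\mBS$ is skew-Hermitian, all its eigenvalues are purely imaginary. With this in hand the converse implication is immediate: if $v\neq 0$ is an eigenvector of $\mBS$, say $\mBS v=\ii\omega v$ with $\omega\in\R$, and if moreover $\mBH v=0$, then $\mB v=\mBH v+\mBS v=\ii\omega v$, so $\ii\omega$ is an eigenvalue of $\mB$ on the imaginary axis.

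For the forward direction I would start from an eigenpair $\mB v=\ii\omega v$ with $\omega\in\R$ and $v\neq 0$, take the inner product with $v$, and apply the splitting to obtain $\ip{v}{\mBH v}+\ip{v}{\mBS v}=\ii\omega\norm{v}^2$. Taking real parts and using that $\ip{v}{\mBH v}\in\R$ while both $\ip{v}{\mBS v}$ and $\ii\omega\norm{v}^2$ are purely imaginary yields $\ip{v}{\mBH v}=0$.

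The key step---and essentially the only one requiring more than bookkeeping---is to upgrade the scalar identity $\ip{v}{\mBH v}=0$ to the vector identity $\mBH v=0$. This is exactly where positive semi-definiteness is needed: writing $\mBH=\mBH^{1/2}\mBH^{1/2}$ with the Hermitian positive semi-definite square root gives $\ip{v}{\mBH v}=\norm{\mBH^{1/2}v}^2$, so $\ip{v}{\mBH v}=0$ forces $\mBH^{1/2}v=0$ and hence $\mBH v=0$. (Without the sign condition on $\mBH$ this implication would fail, which is why accretivity is essential here.) Once $\mBH v=0$ is established, the eigenvalue equation collapses to $\mBS v=\mB v=\ii\omega v$, so the given $v$ is simultaneously an eigenvector of $\mBS$ annihilated by $\mBH$, completing the proof.
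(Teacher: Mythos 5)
Your proof is correct and is exactly the standard argument behind the cited results (the paper itself only references \cite{MMS16,AAC18} and gives no inline proof): split off the quadratic form $\ip{v}{\mB v}$, take real parts to get $\ip{v}{\mBH v}=0$, and use positive semi-definiteness of $\mBH$ to upgrade this to $\mBH v=0$, after which the eigenvalue equation collapses to $\mBS v=\ii\omega v$. You also correctly identify the one place where semi-dissipativity is indispensable, so there is nothing to add.
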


Note that, due to the assumptions, purely imaginary eigenvalues of semi-dissipative matrices are necessarily semi-simple, see also~\cite{MehMW18,MehMW20}.
Therefore, an accretive matrix~$\mB$ is hypocoercive if and only if no eigenvector of the skew-Hermitian part lies in the kernel of the Hermitian part.
The latter condition is well known in control theory, and equivalent to the following statements:
\begin{lemma} \label{lem:HC:equivalence}
Let $\mB\in\Cnn$ be accretive.
Then the following are equivalent:
\begin{itemize}
\item [(B1)] 
There exists $m\in\N_0$ such that
\begin{equation}\label{condition:KalmanRank:BS_BH}
 \rank[{\mBH},\mBS{\mBH},\ldots,(\mBS)^m {\mBH}]=n \,.
\end{equation}
\item [(B2)] 
There exists $m\in\N_0$ such that
\begin{equation}\label{Tm:BS_BH}
 \mT_m :=\sum_{j=0}^m \mBS^j \mBH ((\mBS)^\topH)^j > 0 \,.
\end{equation}
%
\item [(B3)] 
No eigenvector of~$\mBS$ lies in the kernel of~$\mBH$.
\item [(B4)] 
$\rank [\lambda \mI-\mBS, \mBH] =n$ for every $\lambda \in \C$ , in particular for every eigenvalue~$\lambda$ of~$\mBS$.
\end{itemize}
Moreover, the smallest possible~$m\in\N_0$ in (B1) 
and (B2) 
coincide.
\end{lemma}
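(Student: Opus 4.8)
The plan is to show that all four conditions are equivalent to the single statement that the largest $\mBS$-invariant subspace contained in $\kernel \mBH$ is trivial, and then to read off the coincidence of the minimal indices directly from that analysis. Throughout I use that $\mB$ accretive means $\mBH\geq 0$ and that $\mBS$ is skew-Hermitian, so $\mBS^\topH=-\mBS$ and hence $((\mBS)^\topH)^j=(\mBS^j)^\topH=(-1)^j\mBS^j$; keeping these signs and conjugate transposes consistent is the only real bookkeeping issue in the whole argument.

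First I would treat (B1) $\Leftrightarrow$ (B2) together with the claim about the minimal $m$. Since $\mBH\geq 0$, each summand $\mBS^j\mBH((\mBS)^\topH)^j=(\mBS^j\mBH^{1/2})(\mBS^j\mBH^{1/2})^\topH$ of $\mT_m$ is positive semi-definite, so $\mT_m\geq 0$ and
\[
  x^\topH \mT_m x = \sum_{j=0}^m \norm{\mBH^{1/2}(\mBS^j)^\topH x}^2 .
\]
Because $\mBH^{1/2}$ and $\mBH$ have the same kernel, $\mT_m>0$ fails exactly when there is a nonzero $x$ with $\mBH(\mBS^j)^\topH x=0$ for all $j\leq m$. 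Taking conjugate transposes, this is precisely the condition $x^\topH \mBS^j\mBH=0$ for all $j\leq m$, i.e.\ that $x$ is a left null vector of the Kalman matrix $[\mBH,\mBS\mBH,\ldots,(\mBS)^m\mBH]$, whose failure to have rank $n$ is equivalent to the existence of such an $x$. Thus (B1) and (B2) hold for exactly the same indices $m$, which gives both the equivalence and the asserted coincidence of the smallest admissible $m$.

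Next I would connect this common condition to (B3). Define the nested kernels $\mathcal{N}_m:=\{x:\mBH(\mBS^j)^\topH x=0,\ j=0,\ldots,m\}$; they are decreasing and, by the Cayley--Hamilton theorem, stabilize for some $m\leq n-1$ to a limit $\mathcal{N}$. A direct check (using $(\mBS^j)^\topH=(-1)^j\mBS^j$) shows that $\mathcal{N}$ is $\mBS$-invariant and contained in $\kernel\mBH$, and is in fact the largest such invariant subspace; by the previous paragraph (B1)/(B2) hold iff $\mathcal{N}=\{0\}$. If $\mathcal{N}\neq\{0\}$, then $\mBS$ restricted to the nonzero finite-dimensional space $\mathcal{N}$ has an eigenvector $v\in\mathcal{N}\subseteq\kernel\mBH$, violating (B3); conversely an eigenvector $v$ of $\mBS$ with $\mBS v=\lambda v$ and $\mBH v=0$ satisfies $\mBH\mBS^j v=\lambda^j\mBH v=0$, so $v\in\mathcal{N}$. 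This yields (B1) $\Leftrightarrow$ (B3).

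Finally (B3) $\Leftrightarrow$ (B4) is the Hautus test: $\rank[\lambda\mI-\mBS,\mBH]<n$ for some $\lambda$ iff there is a nonzero left null vector $x$ with $x^\topH(\lambda\mI-\mBS)=0$ and $x^\topH\mBH=0$, i.e.\ an eigenvector $x$ of $\mBS$ lying in $\kernel\mBH$ (using that $\mBH$ is Hermitian, so $x^\topH\mBH=0$ is equivalent to $\mBH x=0$); for $\lambda$ that is not an eigenvalue of $\mBS$ the block $\lambda\mI-\mBS$ already has rank $n$, so only eigenvalues of $\mBS$ are relevant. This is exactly the negation of (B3). The one point to be careful about is ensuring that the kernel appearing in the $\mT_m$ computation, the left null space of the Kalman matrix, and the invariant subspace $\mathcal{N}$ are literally the same subspace; once that identification is made explicit, each of the implications above is short and routine.
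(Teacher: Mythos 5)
Your proposal is correct and follows essentially the same route as the paper: your Gram-matrix argument for (B1)$\Leftrightarrow$(B2) via $\mBH^{1/2}$ is exactly the proof of Lemma~\ref{lem:Definiteness} in the Appendix (with $\mD=\mBH$, $\mC=\mBS$), and your unobservable-subspace and Hautus arguments for (B1)$\Leftrightarrow$(B3)$\Leftrightarrow$(B4) are the classical proofs the paper cites from \cite{Da04} and \cite{AAC18}. The only difference is that you write these standard arguments out in full rather than citing them, and your bookkeeping (signs from $(\mBS)^\topH=-\mBS$, identification of the left null space of the Kalman matrix with the largest $\mBS$-invariant subspace of $\ker\mBH$) is accurate.
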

\begin{proof}
The equivalence of (B1), (B3), and (B4) and its proof are classical, see e.g. \cite[Theorem~6.2.1]{Da04} for real matrices, but its proof extends verbatim to complex matrices; see also \cite[Proposition~1]{AAC18}.
The equivalence of (B1) and (B2) follows from Lemma \ref{lem:Definiteness} in the Appendix, setting $\mD:=\mB_H$ and $\mC:=\mB_S$.
\end{proof}

\begin{remark}\label{rem:fullB}
In Lemma~\ref{lem:HC:equivalence} we could have alternatively stated the equivalence of the following conditions, that are equivalent to the corresponding ones in Lemma~\ref{lem:HC:equivalence}.
\begin{itemize}
\item [(B1')] 
There exists $m\in\N_0$ such that
\[ 
 \rank[{\mBH},\mB{\mBH},\ldots,\mB^m {\mBH}]=n \,.
\]
\item [(B2')] 
There exists $m\in\N_0$ such that
\[
\sum_{j=0}^m \mB^j \mBH (\mB^\topH)^j > 0 \,.
\]
\item [(B2'')] 
There exists $m\in\N_0$ such that
\begin{equation}\label{Tm:BS_BH2}
\sum_{j=0}^m (\mB^\topH)^j \mBH \mB^j > 0 \,.
\end{equation}
\item [(B3')] 
No eigenvector of $\mB$ lies in the kernel of $\mBH$.
\item [(B4')] 
$\rank [\lambda \mI-\mB, \mBH] =n$ for every  $\lambda \in \C$ , in particular for every eigenvalue~$\lambda$ of~$\mB$.
\end{itemize}

This is easily seen, since every eigenvector of $\mB$ that is in the kernel of $\mBH$ is immediately an eigenvector of $\mBS$; and conversely, every eigenvector of $\mBS$ that is in the kernel of $\mBH$ is also an eigenvector of $\mB$, see \cite{MehMW18}.
It also follows directly from the staircase forms presented in \cite{AAM21}.
\end{remark}
%

\begin{remark}
The equivalence of properties stated in Proposition~\ref{prop:border}, Lemma~\ref{lem:HC:equivalence} and Remark~\ref{rem:fullB} show that e.g. also the coercivity of the associated matrix~$\mT_m$ in~\eqref{Tm:BS_BH} could have been used to define hypocoercivity for accretive matrices (in the finite-dimensional setting).
Only future research of bounded and unbounded accretive operators on infinite-dimensional Hilbert spaces will decide which is the appropriate characterization for accretive operators to be hypocoercive i.e. to generate a uniformly exponentially stable $C_0$-semigroup.
\end{remark}
\begin{definition}[{\cite[Definition 3.1]{AAC22}}] \label{def:HCI}
Suppose that $\mB\in\Cnn$ is accretive and hypocoercive.
The~\emph{hy\-po\-co\-er\-ci\-vi\-ty index (HC-index)~$m_{HC}$ of the matrix~$\mB$} is defined as the smallest integer~$m\in\N_0$ such that~\eqref{Tm:BS_BH} holds.
\end{definition}

Note that for $\mB\in\Cnn$ (by the Cayley-Hamilton theorem applied to (B1')) it follows immediately that the hypocoercivity index (if it exists) is bounded by $n-1$. More precisely, for a finite hypocoercivity index we even have $\mHC\le \dim \ker (\mBH)\le n-1$ (see Remark 4(b) in \cite{AAC18}). 
Furthermore, a hypocoercive matrix~$\mB$ is coercive if and only if $m_{HC}=0$.

\begin{remark}
Hypocoercive matrices are often called~\emph{positively stable}, whereas negative hypocoercive matrices are often called~\emph{stable}.
Note also that in~\cite[Definition 3]{AAM21}, the HC-index for a semi-dissipative matrix~$\mAc\in\Cnn$ is defined as the HC-index of its accretive counterpart~$\mB=-\mAc$.
We do not make use of this convention here.
\end{remark}

%
Phenomenologically, the HC-index of an accretive matrix~$\mB$ describes the structural complexity of the intertwining of the Hermitian part~$\mBH$ and skew-Hermitian part~$\mBS$ (see~\cite{AAC18} for illustrating examples).
Moreover, for a semi-dissipative Hamiltonian ODE~\eqref{ODE:B}, the HC-index characterizes the short-time decay of the spectral norm of the \emph{propagator} of the associated semigroup $S(t):=e^{-\mB t}\in \Cnn$, $t\geq 0$.
%

\begin{proposition}[{\cite[Theorem 2.7]{AAC22}}]\label{prop:ODE-short}
Let the ODE system~\eqref{ODE:B} be semi-dissipative Hamiltonian with 
(accretive) matrix $\mB\in\Cnn$.
\begin{enumerate}[(a)]
\item \label{th:HC-decay:a}
The (accretive) matrix~$\mB$ is hypocoercive (with hypocoercivity index $\mHC\in\N_0$)
if and only if
\begin{equation}\label{short-t-decay}
  \|e^{-\mB t}\|_2 = 1-ct^a+\bigO(t^{a+1})\quad\text{ for } t\in[0,\epsilon),
\end{equation}
for some $a,c,\epsilon>0$. In this case, necessarily $a= 2m_{HC}+1$.
%
\item \label{th:HC-decay:b}
Consider the ODE \eqref{ODE:B} with $\epsilon$-dependent system matrix $B=\epsilon A +C$ where $\epsilon\in\R$.
If $B=\epsilon A+C$ is hypocoercive for $\epsilon\ne 0$, then the coefficient $c=c_\epsilon$ in the Taylor expansion of the propagator norm~\eqref{short-t-decay} satisfies
\begin{equation} \label{th:HC-decay:epsilon}
0
<\tilde{c}_2 \,\epsilon^{2m_{HC}}
\leq c=c_\epsilon
\leq \tilde{c}_1 \,\epsilon^{2m_{HC}},
\end{equation}
for some positive constants $\tilde{c}_1, \tilde{c}_2>0$ independent of $\epsilon\ne 0$.
\end{enumerate}
\end{proposition}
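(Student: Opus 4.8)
```latex
\textbf{Proof proposal.}

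The plan is to reduce the statement about the propagator norm to a short-time Taylor expansion of the quadratic form $\|e^{-\mB t}x^0\|^2$, and then read off the first nonvanishing coefficient using the Kalman-type rank characterization of the HC-index from Lemma~\ref{lem:HC:equivalence}. First I would observe that, since $\mB$ is accretive, we have $\|e^{-\mB t}\|_2\le 1$ for all $t\ge 0$ by the energy estimate~\eqref{energy:estimate}, so it suffices to analyze how fast the norm can drop below~$1$. Fixing a unit vector $x^0$ and writing $f(t):=\|e^{-\mB t}x^0\|^2$, I would expand $e^{-\mB t}=\mI-t\mB+\tfrac{t^2}{2}\mB^2-\cdots$ and compute the successive derivatives $f^{(k)}(0)$ as explicit polynomial expressions in $\mB$ and $\mB^\topH$ evaluated against $x^0$. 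The leading correction to $f(0)=1$ is governed by $\Re\ip{x^0}{\mB x^0}=\ip{x^0}{\mBH x^0}$, and more generally by the quadratic forms associated with the matrices appearing in~\eqref{Tm:BS_BH2}.

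The key step is to match the order of the first nonvanishing derivative of $f$ to the HC-index. The strategy is: if $\mHC=m$, then by (B2'') the matrix $\sum_{j=0}^{m}(\mB^\topH)^j\mBH\mB^j$ is positive definite, while for any smaller index the corresponding sum is only positive semidefinite with nontrivial kernel. I would show that, along the worst-case direction $x^0$ (the one nearly annihilated by $\mBH,\mBH\mB,\dots$ up to order $m-1$), the derivatives $f'(0),\dots,f^{(2m)}(0)$ controlled by lower-order terms vanish, while the $(2m+1)$-st order term is strictly negative and uniform over the relevant directions. This produces the exponent $a=2m_{HC}+1$ and a strictly positive coefficient $c$; the uniform positivity of $\mT_m$ (equivalently the positive-definiteness in (B2'')) is what guarantees $c>0$ after minimizing over the unit sphere. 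Conversely, assuming the expansion~\eqref{short-t-decay} holds with some $a,c,\epsilon>0$, the fact that the norm genuinely decays forces hypocoercivity (no direction can be preserved, ruling out eigenvectors of $\mBS$ in $\ker\mBH$ via (B3)), and the specific order $2m_{HC}+1$ follows by the same derivative bookkeeping run backwards.

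For part~\eqref{th:HC-decay:b}, the plan is to track the $\epsilon$-dependence through this expansion. With $\mB=\epsilon\mA+\mC$, each skew/Hermitian building block in the order-$(2m_{HC}+1)$ coefficient carries a controlled power of $\epsilon$; I would argue that the dominant contribution to the worst-direction coefficient scales like $\epsilon^{2m_{HC}}$, because producing the $m_{HC}$-fold intertwining of Hermitian and skew-Hermitian parts required to break degeneracy costs exactly $2m_{HC}$ factors that are proportional to $\epsilon$ in the relevant limit. Compactness of the unit sphere then yields the two-sided bound~\eqref{th:HC-decay:epsilon} with $\epsilon$-independent constants $\tilde c_1,\tilde c_2$, obtained as the max and min of the leading coefficient over admissible directions.

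The main obstacle I anticipate is the bookkeeping that shows \emph{all} intermediate derivatives $f^{(k)}(0)$ for $k<2m_{HC}+1$ vanish on the critical subspace while the first surviving one is strictly signed. The derivatives of $f$ mix $\mBH$ and $\mBS$ in a nontrivial combinatorial pattern, and one must carefully argue that odd-order terms either vanish or reduce to the semidefinite forms $\sum(\mB^\topH)^j\mBH\mB^j$, and that no cancellation or lower-order leakage spoils the sharp exponent. Establishing the precise correspondence between the smallest $m$ with $\mT_m>0$ and the order of vanishing—rather than merely an inequality between them—is the delicate technical heart of the argument, and is presumably where the cited proof of \cite[Theorem 2.7]{AAC22} invests most of its effort.
```
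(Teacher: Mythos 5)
The paper itself does not prove this proposition: it is imported verbatim from \cite[Theorem 2.7]{AAC22} (the subsequent remark explicitly defers to ``the proof of Proposition~\ref{prop:ODE-short} in~\cite{AAC22}''), so there is no in-paper proof to compare your attempt against. Your outline does follow the same general route as the cited proof---study the positive semi-definite Hermitian family $\mI-e^{-\mB^\topH t}e^{-\mB t}$, Taylor-expand, and identify the first non-degenerate order with the smallest $m$ for which $\mT_m>0$---so the skeleton is the right one.

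As written, however, the proposal defers exactly the steps on which the theorem turns, and two of them are genuine gaps rather than routine bookkeeping. First, $\|e^{-\mB t}\|_2^2=1-\lambda_{\min}\bigl(\mI-e^{-\mB^\topH t}e^{-\mB t}\bigr)$ is an \emph{infimum} over the unit sphere of the quantities $1-f_{x^0}(t)$, and pointwise Taylor expansions of each $f_{x^0}$ do not pass to the infimum: the minimizing direction moves with $t$, and for directions close to (but not in) the critical subspace $\bigcap_{j=0}^{m-1}\ker(\mBH\mB^j)$ the low-order coefficients are small but nonzero and not all of one sign beyond the first. What is needed is a uniform matrix inequality of the form $\mI-e^{-\mB^\topH t}e^{-\mB t}\ge 2c\,t^{2\mHC+1}\mI$ for small $t$; ``compactness of the unit sphere'' does not produce this without an additional argument, and this is precisely where \cite{AAC22} invests its main technical lemma. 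Second, the claim that $f^{(k)}(0)=(-1)^k\,\ipBig{x^0}{\sum_{j=0}^{k}\binom{k}{j}(\mB^\topH)^j\mB^{k-j}\,x^0}$ vanishes for all $k\le 2\mHC$ on the critical subspace while $f^{(2\mHC+1)}(0)$ is strictly negative there is a nontrivial noncommutative combinatorial identity; it is not a formal consequence of (B2'') and you name it as the ``delicate heart'' without supplying it, so neither the exponent $a=2\mHC+1$ nor the strict positivity of $c$ is actually established. The same applies to part~\eqref{th:HC-decay:b}: the asserted $\epsilon^{2\mHC}$ scaling presupposes explicit two-sided bounds on $c$ in terms of $\mT_{\mHC}$ together with a verification that these bounds hold uniformly in $\epsilon$, neither of which is argued. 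In short, the proposal identifies the correct strategy but does not constitute a proof.
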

\begin{remark}
\begin{itemize}
 \item
For genuine semi-dissipative Hamiltonian ODE systems~\eqref{ODE:B} (such that $\mu(\mAc)=0$), the estimate~\eqref{ODE:short-t} based on the logarithmic norm~$\mu(\mAc)$ yields only $\|x(t)\|\leq\|x^0\|$ for $t\geq 0$.
 \item
For semi-dissipative Hamiltonian ODE systems~\eqref{ODE:B}, (a lower bound for) the characterization of the HC-index via the short-time behavior of the propagator norm in~\eqref{short-t-decay} may also be derived by considering a suitable energy-preserving system, see e.g. \cite{Sta05}.
However, the proof of Proposition~\ref{prop:ODE-short} in~\cite{AAC22} yields \emph{quantitative} lower and upper bounds for the multiplicative constant~$c$ in~\eqref{short-t-decay}. These explicit bounds allow to conclude the structural result in Proposition~\ref{prop:ODE-short}~\ref{th:HC-decay:b}.
\end{itemize}
\end{remark}

In Figure~\ref{fig:VennDiagram} we illustrate the relationship between the different concepts that we have discussed so far.
\begin{figure}
\includegraphics[width=\textwidth]{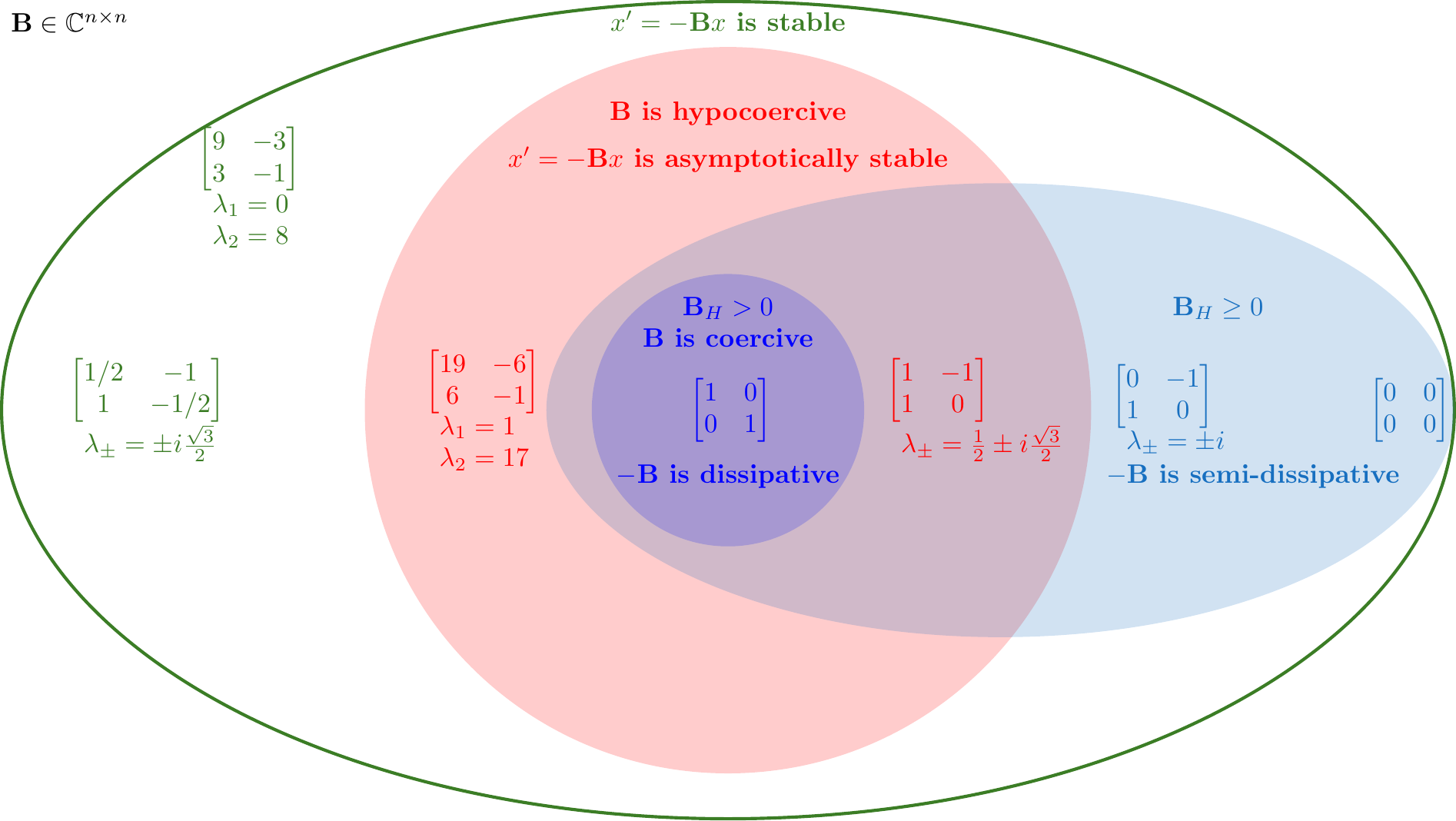}
\caption{Illustration of the relationship between sets of matrices $\mB\in\Cnn$ which are (hypo)coercive (circular discs), have a positive semi-definite Hermitian part (region within smaller ellipse), and those for which the solutions of the ODE system $x'=-\mB x$ are stable (region within bigger ellipse), respectively.}
 \label{fig:VennDiagram}
\end{figure}


\begin{remark}\label{rem_Hamiltonian}
As one of the main applications of the analysis of the three discussed concepts is the study of (semi-)dissipative Hamiltonian systems, a natural concept that could be added to the description of the dynamical system is that of a \emph{Hamiltonian or energy function}.
In the abstract setting that we have discussed so far, the natural energy function is the Euclidean norm of the solution.
Further energy functions will be discussed below.
\end{remark}
\begin{remark}[logarithmically optimal norms]
For a Hermitian matrix $\mAc\in\Cnn$, its logarithmic norm $\mu(\mAc)$ and its spectral abscissa $\alpha(\mAc)$ are equal, $\mu(\mAc)=\alpha(\mAc)$.
In general, however, only the inequality $\alpha(\mAc)\leq \mu(\mAc)$ holds, see e.g. \cite[Lemma 1c]{St75}.
A norm is \emph{logarithmically optimal}
with respect to a matrix $\mAc$ if its spectral abscissa $\alpha(\mAc)$ and logarithmic norm $\mu(\mAc)$ are equal, i.e. $\alpha(\mAc) =\mu(\mAc)$.
Thus the Euclidean norm is logarithmically optimal for all Hermitian matrices.
\end{remark}

To analyze the relationship between the different concepts further, in the next section we first discuss the question by which transformations of~\eqref{ODE:B} we can switch between the different concepts and which transformations leave the different properties invariant.

\subsection{Linear transformations that preserve stability, semi-dissipativity, and hy\-po\-co\-er\-ci\-vi\-ty}\label{sec:trafo}
In this section we discuss the classes of linear transformations that preserve the concepts of stability, semi-dissipativity, and hypocoercivity, and also those that map between the different concepts, see also e.g. \cite{JoSm05, JoSm06} for some references.
The natural classes of linear transformations that preserve the different properties and the HC-index (in case of accretive matrices) are
\emph{conjugate transposition} $\mB\to\mB^\topH$, due to Definition~\ref{def:HCI} and Lemma~\ref{lem:HC:equivalence};
\emph{unitary congruence transformations} $\mB\to\mU\mB\mU^\topH$ for a unitary matrix $\mU$, due to Definition~\ref{def:HCI} and Lemma~\ref{lem:HC:equivalence};
\emph{scaling} $\mB\to t\mB$ for any $t\in\R^+$, due to Definition~\ref{def:HCI} and Lemma~\ref{lem:HC:equivalence};
and, as we will show in Lemma~\ref{lemma:Inversion} below, the \emph{inversion} of accretive hypocoercive matrices.

It is a classical result, see e.g.~\cite{Adr95}, how to construct a similarity transformation of a ``stable'' matrix~$\mB$ such that the transformed matrix is accretive:
The origin $x\equiv0$ is a stable state of system~\eqref{ODE:B} if and only if there exists a positive definite matrix $\mP=\mP^\topH\in\Cnn$ that satisfies the \emph{Lyapunov matrix inequality}
\begin{equation} \label{stable0:P}
 \mB^\topH \mP+\mP\mB\geq 0 \,.
\end{equation}
A congruence transformation with the Hermitian matrix~$\mP^{-1/2}$, i.e.\ the inverse of the positive definite square root of $\mP$, yields
\begin{equation} \label{hatBH}
 0
 \leq \mP^{-1/2} (\mB^\topH \mP +\mP\mB) \mP^{-1/2}
 = \mP^{-1/2} \mB^\topH \mP^{1/2} +\mP^{1/2} \mB \mP^{-1/2}
 = 2 \big( \mP^{1/2} \mB \mP^{-1/2}\big)_H \,.
\end{equation}
Hence, the matrix
\begin{equation} \label{hatB}
 \hB :=\mP^{1/2} \mB \mP^{-1/2}
\end{equation}
is accretive.
Moreover, the change of basis $\tilde  \bx(t) := \mP^{1/2} \bx(t)$ transforms~\eqref{ODE:B} into a semi-dissipative Hamiltonian ODE system of the form
\begin{equation}\label{ODE:hatB}
 \tilde \bx'(t)
 =
 -\big(\mP^{1/2} \mB \mP^{-1/2}\big) \tilde \bx(t)
 =
 -\hB\ \tilde \bx(t)\,.
\end{equation}
%
%

Although \emph{similarity transformations} $\mB \to\mS\mB\mS^{-1}$ for invertible matrices~$\mS\in\Cnn$ preserve the spectrum (and hence (negative) hypocoercivity), they \emph{may change} the HC-index of accretive matrices: 
%
\begin{example} 
The matrix
\begin{equation}\label{ODE:B:envelope}
 \mB :=\begin{bmatrix} 1 & -1 \\ 1 & 0 \end{bmatrix}
\end{equation}
is accretive and hypocoercive with $\mHC=1$ (having eigenvalues $\lambda_\pm =(1\pm i\sqrt{3})/2$).
The positive definite Hermitian matrix~$\mP=\begin{bmatrix} 2 & -1 \\ -1 & 2 \end{bmatrix}$ satisfies the continuous-time Lyapunov equation $\mB^\topH\mP +\mP\mB =2\Re(\lambda) \mP =\mP$. 
The similarity transformation~\eqref{hatB} yields a coercive matrix
\[
 \hB
 =\mP^{1/2}\mB\mP^{-1/2}
 =\tfrac12 \begin{bmatrix} 1 & -\sqrt{3} \\ \sqrt{3} & 1 \end{bmatrix}\,,
\]
hence $\mHC(\hB)=0$.
\end{example}

\bigskip
In a similar way, \emph{non-unitary} congruence transformations $\mB \to\mQ\mB\mQ^\topH$ for some nonsingular matrix~$\mQ\in\Cnn$ may change the HC-index as the following example demonstrates.
\begin{example}
Consider the accretive matrix
\[ \mB =\begin{bmatrix} i & 0 \\ 0 & 1 \end{bmatrix} \,. \]
The matrix~$\mB$ has an eigenvalue $i$, hence it is not hypocoercive. 
A congruence transformation with the (non-unitary) matrix
\begin{equation*}
\mQ
 =\begin{bmatrix} 1 & 0 \\ 1 & 1 \end{bmatrix}
 \quad\text{ yields }\quad
\mQ\mB\mQ^\topH
 =\begin{bmatrix} i & i \\ i & 1+i \end{bmatrix}
 =\begin{bmatrix} 0 & 0 \\ 0 & 1 \end{bmatrix}
  +i \begin{bmatrix} 1 & 1 \\ 1 & 1 \end{bmatrix} \,,
\end{equation*}
which is again accretive (due to Sylvester's inertia theorem, see e.g.\ \cite{Gan59a}).
However, the matrix $\mQ\mB\mQ^\topH$ has eigenvalues $\tfrac12 +i\ (1\pm\tfrac{\sqrt {3}}{2})$, and is hypocoercive with HC-index~$\mHC=1$.
\end{example}

As we have already discussed, changing the HC-index also changes the short-time behavior of the solutions of the dynamical system~\eqref{ODE:B}.

\begin{example}
Consider the matrix $\mB$ in Example~\ref{ex:non-coerciveB}.
In agreement with Proposition~\ref{prop:ODE-short}, (the norm of) solutions of the ODE~\eqref{ODE:B} may have horizontal tangents (at any point~$t_0\ge0$) with local behavior $\|x(t)\| =\|x(t_0)\| -c (t-t_0)^3 +\bigO((t-t_0)^4)$ for some $c>0$.
Proceeding as in \cite[Lemma 4.3]{ArEr14}, the similarity transformation~\eqref{hatB} with
\[
 \mP
 =\begin{bmatrix} 3 & 2 \\ 2 & 3 \end{bmatrix}
 \quad\text{yields a coercive matrix}\quad
 \hB
 =\mP^{1/2}\mB\mP^{-1/2}
 =\begin{bmatrix} 1 & \sqrt5 \\ -\sqrt5 & 1 \end{bmatrix}\,.
\]
Accordingly, (the norm of) solutions of the associated ODE~\eqref{ODE:hatB} cannot have horizontal tangents (see Figure~\ref{fig:ODE-decay}).
\begin{figure}
\includegraphics[width=\textwidth]{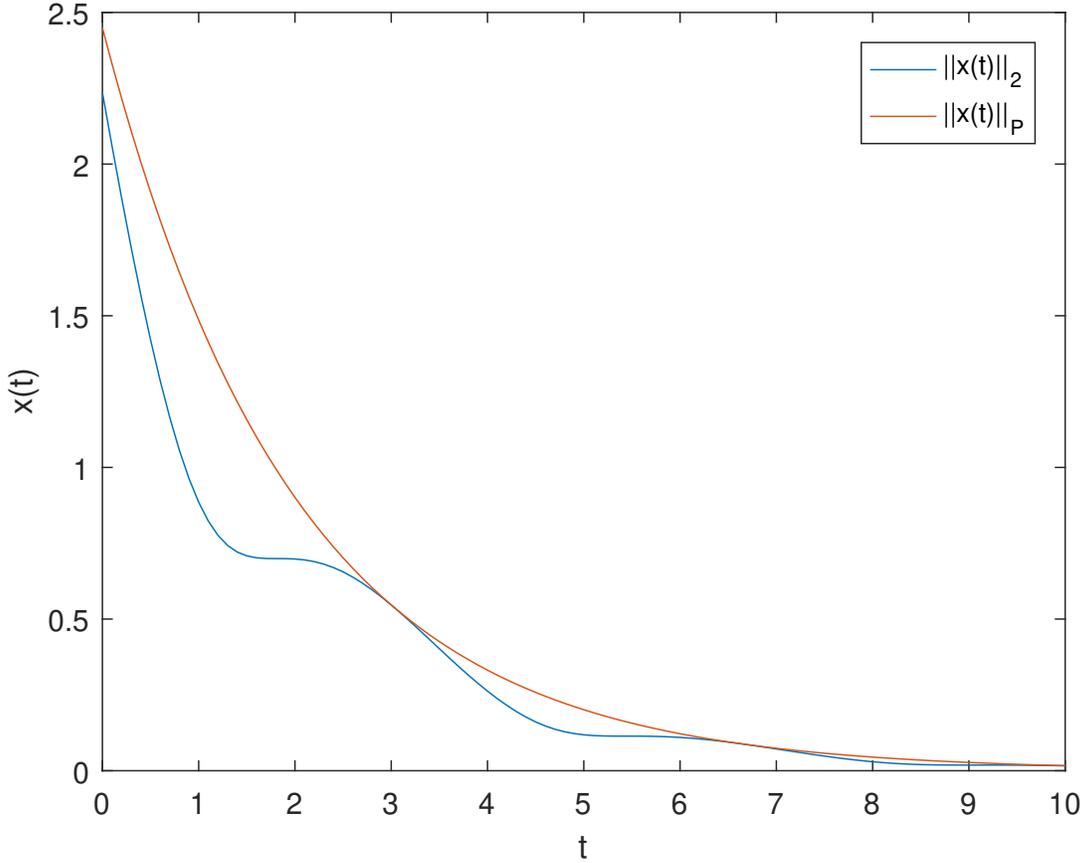}
\caption{For a solution $x(t)$ of the ODE~\eqref{ODE:B} with $\mB=\begin{bmatrix} 1 & -1 \\ 1 & 0 \end{bmatrix}$, the Euclidean norm $\|x(t)\|_2$ (blue line) and the weighted Euclidean norm $\|x(t)\|_{\mP}$ with $\mP=\begin{bmatrix} 2 & -1 \\ -1 & 2 \end{bmatrix}$ (orange line) are plotted.
The norm of the solution $\|x(t)\|_2$ has horizontal tangents (at some point~$t_0$), whereas the weighted norm $\|x(t)\|_{\mP}$ does not have horizontal tangents (due to our choice of $\mP$).}
 \label{fig:ODE-decay}
\end{figure}
\end{example}

\begin{remark}
We note that solutions $\mP$ to the Lyapunov inequality \eqref{stable0:P} are typically not unique, and one can use this freedom to determine solutions that optimize certain robustness measures like the distance to instability, see e.g.\ \cite{BanMNV20,MR3859144,MehV20a}.
\end{remark}

\bigskip
It is an important observation that semi-dissipativity, hypocoercivity and the  HC-index stay invariant when the inverse of a matrix is taken:
\begin{lemma} \label{lemma:Inversion}
Let $\mB\in\Cnn$. 
\begin{itemize}
 \item [1.]
If $\mB$ is hypocoercive then $\mB$ is invertible and $\mBI$ is hypocoercive.
 \item [2.]
If $\mB$ is accretive and invertible then it follows that
\begin{itemize}
 \item [a.]
If $v\in\ker(\mBH)\subset\Cn$ then $\mB v\in\ker(\mBIH)$.
 \item [b.]
$\mBI$ is accretive.
 \item [c.]
$\dim\ker(\mBH) =\dim\ker(\mBIH)$.
\end{itemize}
 \item [3.]
If $\mB$ is accretive and hypocoercive then $\mB$ and $\mBI$ have the same HC-index.
\end{itemize}
\end{lemma}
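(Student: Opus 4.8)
The plan is to take the three parts in order: part~1 is a spectral mapping argument, part~2 reduces to a congruence identity for the Hermitian part of the inverse, and part~3 --- the real work --- transports the hypocoercivity index through inversion after rewriting it as a Krylov (Kalman) condition.

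For part~1 I would note that hypocoercivity places $\sigma(\mB)$ in the open right half-plane, so $0\notin\sigma(\mB)$ and $\mB$ is invertible; since $\sigma(\mBI)=\{1/\lambda:\lambda\in\sigma(\mB)\}$ and $\Re(1/\lambda)=\Re(\lambda)/|\lambda|^{2}>0$ when $\Re(\lambda)>0$, the spectrum of $\mBI$ again lies in the open right half-plane, i.e.\ $\mBI$ is hypocoercive. For part~2 everything rests on the two congruence identities
\[
 \mBIH=(\mBI)^{\topH}\,\mBH\,\mBI=\mBI\,\mBH\,(\mBI)^{\topH},
\]
both obtained from $2\mBIH=\mBI+(\mBI)^{\topH}$ by factoring $\mB+\mB^{\topH}=2\mBH$ out to the right or to the left, e.g.\ $\mBI+(\mBI)^{\topH}=(\mBI)^{\topH}(\mB+\mB^{\topH})\mBI$. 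Item~(2b) is then immediate, since a congruence with the invertible matrix $\mBI$ preserves positive semi-definiteness, so $\mBH\ge0$ forces $\mBIH\ge0$. For~(2a), the first identity gives $\mBIH(\mB v)=(\mBI)^{\topH}\mBH\mBI\mB v=(\mBI)^{\topH}\mBH v=0$ whenever $v\in\ker(\mBH)$, so $\mB v\in\ker(\mBIH)$. For~(2c) I would invoke Sylvester's law of inertia: congruent Hermitian matrices have equal inertia, hence the same number of zero eigenvalues, so $\dim\ker(\mBIH)=\dim\ker(\mBH)$; equivalently the injective map $v\mapsto\mB v$ of~(2a) is a bijection from $\ker(\mBH)$ onto $\ker(\mBIH)$.

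The crux is part~3, and I expect the obstacle to be exactly here. The index is defined through $\mT_m=\sum_{j=0}^m\mBS^{\,j}\mBH\big((\mBS)^{\topH}\big)^{j}$, i.e.\ through powers of the \emph{skew}-Hermitian part; although $\mBH$ and $\mBS$ transform by the congruence with $\mBI$, this congruence does not commute with taking powers because $\mBI$ is not unitary, so $\mT_m(\mBI)$ does not telescope back to $\mT_m(\mB)$. To bypass this I would pass to the equivalent Krylov description. First, for every $m$ the subspaces $\sum_{j=0}^{m}\mBS^{\,j}\ran(\mBH)$ and $\sum_{j=0}^{m}\mB^{\,j}\ran(\mBH)$ coincide: writing $\mB=\mBH+\mBS$ and using $\mBH\,\Cn\subseteq\ran(\mBH)$, a short induction shows that exchanging a factor $\mBS$ for $\mB$ only adds vectors already contained in $\ran(\mBH)$ together with the lower-order subspace. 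Hence, by Lemma~\ref{lem:HC:equivalence} together with Remark~\ref{rem:fullB}, for an accretive hypocoercive $\mB$ the index $\mHC$ is the smallest $m$ with $\cV_m:=\sum_{j=0}^{m}\mB^{\,j}\ran(\mBH)=\Cn$.

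It remains to transport this to $\mBI$, which by parts~1 and~2 is again accretive and hypocoercive, so $\mHC(\mBI)$ is well defined. From the second congruence identity, $\ran(\mBIH)=\mBI\,\ran(\mBH)$ (as $(\mBI)^{\topH}$ is invertible), whence $\mHC(\mBI)$ is the smallest $m$ with
\[
 \sum_{j=0}^{m}(\mBI)^{j}\ran(\mBIH)=\sum_{j=0}^{m}\mB^{-(j+1)}\ran(\mBH)=\mBI\Big(\sum_{j=0}^{m}\mB^{-j}\ran(\mBH)\Big)=\Cn,
\]
that is, the smallest $m$ with $\cW_m:=\sum_{j=0}^{m}\mB^{-j}\ran(\mBH)=\Cn$. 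The decisive and pleasantly simple observation is the reindexing identity $\cW_m=\mB^{-m}\cV_m$ (put $i=m-j$), so that $\cW_m=\Cn$ if and only if $\cV_m=\Cn$; the two minimal indices therefore agree and $\mHC(\mB)=\mHC(\mBI)$. The only genuine difficulty in the whole argument is the replacement of the non-telescoping object $\mT_m$ by the $\mB$-generated Krylov filtration, on which inversion acts transparently --- merely shifting the exponent window --- so that the minimal index is manifestly preserved.
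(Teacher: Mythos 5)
Your proof is correct, and parts 1 and 2 follow the paper's route almost exactly; the only cosmetic difference there is that you package 2a--2c through the congruence identities $\mBIH=(\mBI)^{\topH}\mBH\mBI=\mBI\,\mBH(\mBI)^{\topH}$ and Sylvester's inertia theorem, where the paper does the same computations ``by hand'' (e.g.\ via $\mB v=\mBS v=-\mB^{\topH}v$ for $v\in\ker(\mBH)$) and gets 2c from the bijectivity of $\mB:\ker(\mBH)\to\ker(\mBIH)$. In part 3 you and the paper rely on the same two ingredients --- the passage from the $\mBS$-based index to the $\mB$-based Kalman condition of Remark~\ref{rem:fullB} (which you justify by an explicit induction rather than by citation), and the transport of $\ker(\mBH)$, resp.\ $\ran(\mBH)$, under multiplication by $\mB$ from part 2 --- but you execute the comparison differently: the paper works with the kernel intersections $\bigcap_{j=0}^{m}\ker(\mBH\mB^{j})$, produces a witness vector $v_0$, maps it to a witness $w_0=\mB^{\mHC}v_0$ for $\mBI$, and then exchanges the roles of $\mB$ and $\mBI$ to get the two inequalities $\mHC(\mBI)\ge\mHC(\mB)$ and $\mHC(\mBI)\le\mHC(\mB)$; you instead work with the dual range filtration $\cV_m=\sum_{j=0}^{m}\mB^{j}\ran(\mBH)$ and observe the single reindexing identity $\cW_m=\mB^{-m}\cV_m$, which gives both directions at once. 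The two arguments are dual to one another, but your version is arguably cleaner in that it avoids the witness-vector bookkeeping and makes the symmetry between $\mB$ and $\mBI$ manifest; the paper's version has the minor advantage of exhibiting explicitly the vectors on which the rank condition first becomes effective, which is the same device it reuses in the proof of Theorem~\ref{thm:Cayley}.
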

%
\begin{proof}

1. 
A matrix~$\mB$ is hypocoercive if all eigenvalues 
have positive real-part.
Hence, the matrix~$\mB$ is invertible, and since the eigenvalues of the inverse of~$\mBI$ are the inverses of the eigenvalues of $\mB$, they 
have positive real-part and $\mBI$ is hypocoercive.

2a. 
Writing~$\mB$ as $\mB=\mBH+\mBS$, 
it follows that if $v\in\ker(\mBH)$ then $\mB v =\mBS v =-\mB^\topH v$.
Thus,
\begin{equation}\label{AIH*Av}
 \mBIH (\mB v)
 =
 \tfrac12
 \big( \mBI (\mB v) +(\mBI)^\topH (\mB v)\big)
 =
 \tfrac12
 \big( v -(\mB^\topH)^{-1} (\mB^\topH v)\big)
 =
 0 \ .
\end{equation}

2b. 
To prove that $\mBI$ is again accretive, we show the following identity:
For all vectors $w\in\Cn$, define $v:=\mBI w$, such that
\begin{equation}
\begin{split}
\ip{w}{\mBIH w}
&=
\tfrac12 \ip{w}{(\mBI +(\mBI)^\topH) w}
=
\tfrac12 \ip{\mB v}{(\mBI +\mB^{-\topH}) \mB v}
\\
&=
\tfrac12 \ip{v}{\mB^\topH (\mBI +\mB^{-\topH})\mB v}
=
\tfrac12 \ip{v}{(\mB^\topH +\mB) v}
=
\ip{v}{\mBH v}
\geq
0 \ ,
\end{split}
\end{equation}
since $\mB$ is accretive.
Hence, $\mBI$ is accretive as well.

2c. Due to part 2a. and a similar statement with the roles of $\mB$ and $\mBI$ exchanged, $\mB$ is a bijection from $\ker(\mBH)$ to $\ker(\mBIH)$.


3. By assumption, the matrix~$\mB$ has a finite HC-index~$\mHC$ which is the smallest integer such that \eqref{condition:KalmanRank:BS_BH} holds 
or equivalently, due to \eqref{Tm:BS_BH2}, that
\[
 \bigcap_{j=0}^{\mHC} \ker \big(\mBH \mB^j\big)
 =
 \{0\}
\]
holds, see also~\cite[Remark 4]{AAC18}.
Hence, there exists a vector $v_0\neq 0$ such that
\begin{equation} \label{v0}
 \mB^j v_0\in \ker (\mBH) \ ,
 \qquad
 j\in\{0,\ldots,\mHC-1\}
 \qquad \text{and }
 \mB^{\mHC} v_0 \notin \ker(\mBH) \ . 
\end{equation}
Due to 2b., 
it follows that
\begin{equation} \label{condAv0}
 \mB^{j+1} v_0\in \ker(\mBIH) \ ,
 \qquad
 j\in\{0,\ldots,\mHC-1\}
 \qquad \text{and }
 \mB^{\mHC+1} v_0 \notin \ker(\mBIH) \ .
\end{equation}
The matrix $\mBI$ is hypocoercive and accretive with finite HC-index~$\mHCmBI:=\mHC(\mBI)$ and hence, there exists a vector $w_0\neq 0$ such that
\begin{equation} \label{condw0}
 (\mBI)^j w_0\in \ker(\mBIH) \ ,
 \quad
 j\in\{0,\ldots,\mHCmBI-1\}
 \quad \text{and }
 (\mBI)^{\mHCmBI} w_0 \notin \ker(\mBIH) \ .
\end{equation}
To show that $\mHC =\mHC(\mB) =\mHC(\mBI) =\mHCmBI$, suppose that $v_0$ is a vector in $\Cn$ satisfying~\eqref{condAv0} with $\mHC=\mHC(\mB)$.
Then $w_0 :=\mB^{\mHC} v_0$ satisfies $w_0\ne0$, hence, \eqref{condw0} implies that $\mHC(\mB^{-1})\geq \mHC(\mB)$.
Exchanging the roles of $\mB$ and $\mBI$ shows that $\mHC(\mB^{-1})\leq \mHC(\mB)$.
Altogether, $\mHC(\mB)=\mHC(\mBI)$ holds.
\end{proof}

In this section we have discussed linear transformations and their effects on the concepts of hypocoercivity, stability and semi-dissipativity.
In the next section we discuss how the (concept of the) HC-index for accretive matrices can be transferred to general matrices.

\subsection{Shifted hypocoercivity index for general matrices}
\label{sec:SHCIndex}
A possibility to turn a general system \eqref{ODE:B} into a semi-dissipative Hamiltonian system is to shift the spectrum.
%
Consider the transformation
\begin{equation}\label{shift}
\bv(t) := \exp(\lambdaMinBH t) \bx(t) \,,
\end{equation}
where $\lambdaMinBH$ is the minimal (real) eigenvalue of the Hermitian matrix~$\mBH$.
Then, $\bv(t)$ satisfies the ODE
\[
 \bv'(t)
 = -\underbrace{(\mB -\lambdaMinBH \mI)}_{=:\tB} \bv(t) \ ,
\]
where the Hermitian part~$\tB_H$ of~$\tB =\mB -\lambdaMinBH \mI$ is indeed positive semi-definite.
Of course, the hypocoercivity index of matrix $\tB$ is typically modified by the shift parameter~$\lambda$.
\begin{remark}
The transformation \eqref{shift} can be motivated as follows:
The propagator for ODE~\eqref{ODE:B} with $\mAc=-\mB$ satisfies estimate~\eqref{ODE:short-t} based on the logarithmic norm~$\mu(\mAc)$.
Therefore, for $t\geq 0$,
\[
 1
\geq \|e^{\mAc t}\| e^{-\mu(\mAc)\ t}
= \| e^{(\mAc -\mu(\mAc)\mI)\ t}\|
= \| e^{-(\mB -\lambdaMinBH\mI)\ t}\|,
\]
since the logarithmic norm $\mu(\mAc)$ can also be characterized as
\[
 \mu(\mAc)
:= \sup_{\|x\|=1} \Re (\ip{x}{\mAc x})
 = \sup_{\|x\|=1} \ip{x}{\tfrac12 (\mAc^\topH +\mAc) x}
 = \lambda_{\max}^{\mAH}
 = -\lambda_{\min}^{\mBH} \ ,
\]
where $\lambdaMaxAH$ is the maximal (real) eigenvalue of the Hermitian matrix~$\mAH$.
\end{remark}

%
In view of this shifting property, for general linear time-invariant ODE systems~\eqref{ODE:B} with matrix $\mB\in\Cnn$, we will define a \emph{shifted hypocoercivity index} 
which characterizes ``the algebraic factor`` in the decay of its propagator norm for short time, see Corollary~\ref{cor:mB:SHC-decay} below.
%
As a first step, we decompose the matrix $\mB\in\Cnn$.
\begin{lemma} \label{lem:mB:decomp}
Let $\mB\in\Cnn$ with Hermitian part $\mBH$, and let $\lambdaMinBH$ be the minimal (real) eigenvalue of the Hermitian matrix~$\mBH$ (which could be negative or non-negative).
Then,
the matrix
\begin{equation} \label{mB:decomp}
 \tB := \mB -\lambdaMinBH \mI
\end{equation}
is accretive and, if~$\tB$ is hypocoercive, has an HC-index $\mHC(\tB)$ greater than~$0$.
\newline\indent
In particular, 
$\tB$ is hypocoercive if and only if no eigenvector of~$\mBH$ associated with $\lambdaMinBH$ is an eigenvector of the skew-Hermitian part $\mBS$ of $\mB$.
\end{lemma}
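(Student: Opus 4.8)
The plan is to prove Lemma~\ref{lem:mB:decomp} in three parts. First, I would verify that $\tB := \mB - \lambdaMinBH \mI$ is accretive. Since $\lambdaMinBH$ is real, the Hermitian part of $\tB$ is exactly $\tB_H = \mBH - \lambdaMinBH \mI$. By the spectral theorem, the eigenvalues of $\tB_H$ are $\lambda - \lambdaMinBH$ as $\lambda$ ranges over the eigenvalues of $\mBH$, all of which are $\geq 0$ by the definition of $\lambdaMinBH$ as the minimal eigenvalue. Hence $\tB_H \geq 0$ and $\tB$ is accretive.

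Second, I would show that if $\tB$ is hypocoercive then $\mHC(\tB) > 0$. Recall from the discussion after Definition~\ref{def:HCI} that a hypocoercive accretive matrix has HC-index $0$ if and only if it is coercive, i.e.\ if and only if $\tB_H > 0$. So it suffices to observe that $\tB_H = \mBH - \lambdaMinBH \mI$ is never positive definite: its minimal eigenvalue is $\lambdaMinBH - \lambdaMinBH = 0$, so $\tB_H$ has a nontrivial kernel (spanned by the eigenvectors of $\mBH$ associated with $\lambdaMinBH$). Thus $\tB$ cannot be coercive, forcing $\mHC(\tB) \geq 1$ whenever the index is finite.

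Third, I would establish the characterization of hypocoercivity of $\tB$. The key observation is that the skew-Hermitian part is shift-invariant: $\tB_S = \mBS$, since subtracting the real multiple $\lambdaMinBH \mI$ changes only the Hermitian part. Now $\tB$ is accretive, so by Proposition~\ref{prop:border} (or equivalently condition (B3) of Lemma~\ref{lem:HC:equivalence}), $\tB$ is hypocoercive if and only if no eigenvector of $\tB_S$ lies in the kernel of $\tB_H$. Translating back via $\tB_S = \mBS$ and $\ker(\tB_H) = \ker(\mBH - \lambdaMinBH \mI)$, which is precisely the eigenspace of $\mBH$ associated with $\lambdaMinBH$, this reads: $\tB$ is hypocoercive if and only if no eigenvector of $\mBS$ is an eigenvector of $\mBH$ associated with $\lambdaMinBH$, which is the claimed statement.

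I do not expect any serious obstacle here; the lemma is essentially a bookkeeping exercise exploiting that a real scalar shift moves only the Hermitian part and translates its spectrum. The only point requiring mild care is the logical equivalence ``no eigenvector of $\mBS$ is an eigenvector of $\mBH$ associated with $\lambdaMinBH$'' versus ``no eigenvector of $\mBH$ associated with $\lambdaMinBH$ is an eigenvector of $\mBS$'': these are the same condition (an eigenvector shared by $\mBS$ and by the $\lambdaMinBH$-eigenspace of $\mBH$), so one should state it symmetrically to match the phrasing in (B3), where the condition is about eigenvectors of the skew-Hermitian part lying in $\ker(\tB_H)$. The mildly subtle part is recognizing that $\ker(\tB_H)$ equals the $\lambdaMinBH$-eigenspace of $\mBH$ rather than the full kernel of $\mBH$, but this follows immediately from $\tB_H = \mBH - \lambdaMinBH \mI$.
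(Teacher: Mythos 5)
Your proof is correct and follows essentially the same route as the paper: compute that the shift by the real scalar $\lambdaMinBH$ leaves $\mBS$ unchanged and makes $\tB_H=\mBH-\lambdaMinBH\mI$ positive semi-definite but singular (hence non-coercive, so $\mHC(\tB)\geq 1$ when finite), and then invoke condition (B3) of Lemma~\ref{lem:HC:equivalence} with $\ker(\tB_H)$ identified as the $\lambdaMinBH$-eigenspace of $\mBH$. No gaps.
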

\begin{proof}
If we decompose $\mB=\mBH +\mBS$ into its Hermitian part $\mBH$ and its skew-Hermitian part $\mBS$, then $\mBH$  has only real eigenvalues.
Consider the matrix $\tB := \mB -\lambda \mI$ for $\lambda\in\R$.
Then $\lambda=\lambdaMinBH$ is the only value for which the Hermitian part of $\tB$ is positive semi-definite and singular (hence, if~$\tB$ is hypocoercive then $\mHC(\tB)>0$).
\newline\indent
The hypocoercivity condition for $\tB$ follows from Lemma \ref{lem:HC:equivalence}, (B3):
Matrix~$\tB$ fails to be hypocoercive 
if and only if an eigenvector~$v$ of $\mBS$ (which is not changed by the shift) is in the kernel of $(\tB+\tB^\topH)/2 =\mBH -\lambdaMinBH\mI$, or equivalently $v$ is an eigenvector of $\mBH$ to the eigenvalue~$\lambdaMinBH$.
\end{proof}

\begin{definition}\label{def-SHC-index}
Let~$\mB\in\Cnn$ with Hermitian part~$\mBH$, and let $\lambdaMinBH$ be the minimal (real) eigenvalue of the Hermitian matrix~$\mBH$.
If the accretive matrix $\tB := \mB -\lambdaMinBH \mI$ is hypocoercive, then its HC-index~$\mHC\in\N$ is called the \emph{shifted hypocoercivity index (SHC-index)~$\mSHC$} of~$\mB$.
\end{definition}

By definition, an accretive matrix has a (finite) HC-index~$\mHC$ if and only if it is positively stable, see also~\cite{AAC22,AAM21}.
However, a general (constant) matrix can have a finite SHC-index~$\mSHC$ without being positively stable, see the following example
and Figure~\ref{fig:VennDiagram}.
%
\begin{example} \label{ex:mSHC:unstable}
Consider the matrix
\[
 \mB :=\begin{bmatrix} 9 & -3 \\ 3 & -1 \end{bmatrix}
\]
which has the eigenvalues $\lambda_1 =0$ and $\lambda_2 =8$ and hence is not positively stable.
Its Hermitian part $\mBH=\diag(9,-1)$ has the minimal eigenvalue $\lambdaMinBH=-1$.
Then, in 
\eqref{mB:decomp} we have
\[
 \tB
 =\mB -\lambdaMinBH \mI
 =\begin{bmatrix} 10 & -3 \\ 3 & 0 \end{bmatrix}
\]
which has eigenvalues $1$ and $9$.
Therefore, $\mSHC(\mB) =\mHC(\tB) =1$.
\end{example}

%
We have
the following characterization for
accretive matrices to have a (finite) SHC-index.
\begin{corollary} \label{cor:SHC:equivalence}
Let $\mJ,\mR\in\Cnn$ satisfy $\mR=\mR^\topH$ and $\mJ=-\mJ^\topH$
and let $\lambda_{\min}$ be the minimal eigenvalue of~$\mR$.
Define $\tR :=\mR-\lambda_{\min} \mI$. 
Then the following conditions are equivalent:
\begin{enumerate}[(B1)]
\item \label{SHC:KRC}
There exists $m\in\N_0$ such that
\begin{equation}\label{condition:KalmanRank:J_R_lambda}
 \rank([\mR,\mJ{\mR},\ldots,\mJ^m {\mR}]-\lambda_{\min}[\mI,\mJ,\ldots,\mJ^m])
 =n \,.
\end{equation}
\item \label{SHC:Tm}
There exists $m\in\N_0$ such that
\begin{equation}\label{Tms:J_R_lambda}
\sum_{j=0}^m \mJ^j \mR (\mJ^\topH)^j > \lambda_{\min} \sum_{j=0}^m \mJ^j (\mJ^\topH)^j\,.
\end{equation}
%
\item \label{SHC:PBH:EVec}
No eigenvector of~$\mJ$ is an eigenvector to $\lambda_{\min}$ of~$\mR$.
\item \label{SHC:PBH:EVal}
$\rank [\lambda \mI-\mJ,\mR-\lambda_{\min} \mI] =n$ for every $\lambda\in\C$, in particular for every eigenvalue $\lambda$ of $\mJ$.
%
\end{enumerate}
Moreover, the smallest possible~$m\in\N_0$ in~(B\ref{SHC:KRC}) 
and~(B\ref{SHC:Tm}) coincide. 
\end{corollary}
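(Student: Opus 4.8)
The plan is to recognize the corollary as nothing more than Lemma~\ref{lem:HC:equivalence} applied to a suitably shifted matrix, and then to translate each of the four conditions back into the unshifted data $\mR,\mJ,\lambda_{\min}$. First I would introduce the matrix $\tB := \tR +\mJ = (\mR-\lambda_{\min}\mI) +\mJ$. Since $\lambda_{\min}$ is the minimal eigenvalue of the Hermitian matrix $\mR$, the shifted matrix $\tR =\mR-\lambda_{\min}\mI$ is positive semi-definite; it is exactly the Hermitian part of $\tB$, while $\mJ$ is its skew-Hermitian part (the shift does not touch the skew-Hermitian part). Hence $\tB$ is accretive, and Lemma~\ref{lem:HC:equivalence} applies verbatim with $\tB_H =\tR$ and $\tB_S =\mJ$, yielding the equivalence of its four conditions together with the coincidence of the smallest admissible $m$ in its (B1) and (B2).

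It then remains to rewrite each condition in terms of $\mR,\mJ,\lambda_{\min}$. For (B\ref{SHC:KRC}), since $\mJ^j\tR =\mJ^j\mR -\lambda_{\min}\mJ^j$, the Kalman block of condition (B1) of the lemma satisfies $[\tR,\mJ\tR,\ldots,\mJ^m\tR] =[\mR,\mJ\mR,\ldots,\mJ^m\mR] -\lambda_{\min}[\mI,\mJ,\ldots,\mJ^m]$, which is precisely~\eqref{condition:KalmanRank:J_R_lambda}. For (B\ref{SHC:Tm}), substituting $\tR=\mR-\lambda_{\min}\mI$ into the matrix $\mT_m =\sum_{j=0}^m \mJ^j\tR(\mJ^\topH)^j$ of~\eqref{Tm:BS_BH} splits the sum into $\sum_{j=0}^m\mJ^j\mR(\mJ^\topH)^j -\lambda_{\min}\sum_{j=0}^m\mJ^j(\mJ^\topH)^j$, so $\mT_m>0$ is exactly~\eqref{Tms:J_R_lambda}. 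For (B\ref{SHC:PBH:EVec}), the kernel of $\tR=\mR-\lambda_{\min}\mI$ is the eigenspace of $\mR$ for the eigenvalue $\lambda_{\min}$, so ``no eigenvector of $\mJ$ lies in $\ker(\tR)$'' becomes ``no eigenvector of $\mJ$ is an eigenvector of $\mR$ to $\lambda_{\min}$''. Finally (B\ref{SHC:PBH:EVal}) follows immediately from (B4) of the lemma upon inserting $\tB_S=\mJ$ and $\tB_H=\tR$.

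I do not expect a genuine obstacle here: the whole content is carried by Lemma~\ref{lem:HC:equivalence}, and the only substantive point to verify is that the shift indeed produces an accretive matrix, which is guaranteed by $\lambda_{\min}$ being the minimal eigenvalue of $\mR$. The coincidence of the smallest admissible $m\in\N_0$ in (B\ref{SHC:KRC}) and (B\ref{SHC:Tm}) transfers directly, since the shifted objects $\tR$ and $\mJ$ are identical to the objects $\mBH$ and $\mBS$ appearing in the lemma. The only mild care required is bookkeeping: confirming that the linearity of the shift propagates correctly through the powers $\mJ^j$ in both the Kalman matrix and the sum $\mT_m$, so that the strict inequality $\mT_m>0$ for $\tR$ corresponds exactly to the inequality in~\eqref{Tms:J_R_lambda}.
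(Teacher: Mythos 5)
Your proposal is correct and follows exactly the paper's own route: the paper likewise observes that $\tR=\mR-\lambda_{\min}\mI$ is positive semi-definite, so the accretive matrix $\tR+\mJ$ falls under Lemma~\ref{lem:HC:equivalence}, and the four conditions are just the lemma's conditions rewritten via $\mJ^j\tR=\mJ^j\mR-\lambda_{\min}\mJ^j$. You merely spell out the bookkeeping that the paper leaves implicit.
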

\begin{proof}
The Hermitian matrix $\tR =\mR-\lambda_{\min} \mI$ is positive semi-definite.
Hence, the statement (which is stated for the original matrix~$\mR$ using $\tR =\mR-\lambda_{\min} \mI$) follows from Lemma \ref{lem:HC:equivalence}.
\end{proof}

In the following result we show that we can use the SHC-index to characterize the short-time behavior of the propagator norm for general linear time-invariant systems of ODEs.
For this we denote the solution semigroup pertaining to~\eqref{ODE:B} by $S(t):=e^{-\mB t}\in \Cnn$, $t\geq 0$.
\begin{corollary} \label{cor:mB:SHC-decay}
Consider an ODE~\eqref{ODE:B} with system matrix~$\mB\in\Cnn$.
If $\mB$ has a finite SHC-index~$\mSHC(\mB)$, then
\begin{equation}\label{mB:short-t-decay}
 \|e^{-\mB t}\|_2
 =
 e^{-\lambdaMinBH t}\
 \big( 1 -ct^a +\bigO(t^{a+1})\big)
 \quad\text{for } t\to0+\,,
\end{equation}
where $\lambdaMinBH$ is the smallest eigenvalue of the Hermitian matrix $\mBH$, $a=2\mSHC(\mB)+1\ (\geq 3)$, and $c>0$.
\end{corollary}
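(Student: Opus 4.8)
The plan is to reduce the statement entirely to Proposition~\ref{prop:ODE-short} applied to the shifted matrix $\tB := \mB -\lambdaMinBH\mI$, using only an elementary factorization of the propagator. The key observation is that $\lambdaMinBH\mI$ is a scalar multiple of the identity, hence commutes with $\tB$, so the matrix exponential splits as a product. First I would write $-\mB = -\tB -\lambdaMinBH\mI$ and conclude
\begin{equation*}
 e^{-\mB t}
 = e^{-\lambdaMinBH t\mI}\, e^{-\tB t}
 = e^{-\lambdaMinBH t}\, e^{-\tB t}\,,
\end{equation*}
where the scalar $\lambdaMinBH$ is real, being the minimal eigenvalue of the Hermitian matrix $\mBH$. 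Therefore $e^{-\lambdaMinBH t}>0$ is a positive real factor that pulls out of the spectral norm, giving $\|e^{-\mB t}\|_2 = e^{-\lambdaMinBH t}\,\|e^{-\tB t}\|_2$ for all $t\geq 0$.

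Next I would use the hypothesis that $\mB$ has a finite SHC-index. By Definition~\ref{def-SHC-index}, this means precisely that $\tB = \mB -\lambdaMinBH\mI$ is accretive and hypocoercive, with HC-index $m_{HC}(\tB) = \mSHC(\mB)$. Consequently $\tB$ is the system matrix of a semi-dissipative Hamiltonian ODE of the form treated in Proposition~\ref{prop:ODE-short}, and part~\ref{th:HC-decay:a} of that proposition yields, for some $c>0$ and $\epsilon>0$,
\begin{equation*}
 \|e^{-\tB t}\|_2 = 1 - ct^a + \bigO(t^{a+1})\,,\qquad t\in[0,\epsilon)\,,
\end{equation*}
with $a = 2m_{HC}(\tB)+1 = 2\mSHC(\mB)+1$. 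Substituting this expansion into the factorization of the previous paragraph gives exactly the claimed formula~\eqref{mB:short-t-decay}. The lower bound $a\geq 3$ is then immediate: by Lemma~\ref{lem:mB:decomp}, a hypocoercive $\tB$ necessarily satisfies $m_{HC}(\tB)>0$, so $\mSHC(\mB)\geq 1$ and $a=2\mSHC(\mB)+1\geq 3$.

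There is essentially no deep obstacle here, since all the analytic content is already carried by Proposition~\ref{prop:ODE-short}. The only points requiring a little care are bookkeeping ones: that $\lambdaMinBH$ is genuinely real so that the exponential prefactor is positive and factors cleanly out of the norm without disturbing the Taylor structure of the bracketed term, and that the interval $[0,\epsilon)$ in which Proposition~\ref{prop:ODE-short} holds is exactly what is needed for the $t\to 0+$ asymptotics claimed. Thus the whole argument is a short reduction, and I would present it in a few lines once the factorization identity and the identification $m_{HC}(\tB)=\mSHC(\mB)$ are in place.
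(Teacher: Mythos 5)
Your proof is correct and takes essentially the same route as the paper's: the same shift decomposition $\tB=\mB-\lambdaMinBH\mI$, the same factorization of the matrix exponential into $e^{-\lambdaMinBH t}\,e^{-\tB t}$, and the same appeal to Proposition~\ref{prop:ODE-short}(a). Your write-up is if anything slightly more explicit than the paper's, spelling out that $\lambdaMinBH$ is real (so the prefactor pulls out of the norm) and that $a\geq 3$ follows from Lemma~\ref{lem:mB:decomp}.
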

\begin{proof}
Write $\mB$ as in~\eqref{mB:decomp} and compute the HC-index~$\mHC(\tB)\ (\geq 1)$ of the accretive matrix~$\tB =\mB -\lambdaMinBH\mI$. 
Using the decomposition~\eqref{mB:decomp} yields
\begin{equation}
 e^{-\mB t}
 =
 e^{-(\lambdaMinBH \mI +\tB) t}
 =
 e^{-\lambdaMinBH t}\ e^{-\tB t}
 \ , \text{ such that }
 \|e^{-\mB t}\|_2
 =
 e^{-\lambdaMinBH t}\ \|e^{-\tB t}\|_2 \ .
\end{equation}
If an accretive matrix $\tB$ is hypocoercive, i.e.\ having a finite HC-index~$\mHC(\tB)$ (or equivalently~$\mB$ has a finite SHC-index~$\mSHC(\mB)$) then~\eqref{mB:short-t-decay} follows from Proposition~\ref{prop:ODE-short}.
\end{proof}

In this section we have gathered and extended results about stable, hypocoercive, and semi-dissipative matrices.
These results have analoga for discrete-time systems that are studied in the next section.


\section{Stability, semi-contractivity and hypocontractivity for discrete-time systems}
\label{sec:Stability+ODEs:discrete-time}

In this section we study the analogous concepts for linear discrete-time systems 
\begin{equation}\label{DS:A}
 x_{k+1} = \mAd x_k  \,, \qquad k\in\N_0\,,
\end{equation}
for a given matrix $\mAd\in\Cnn$.

\begin{remark}\label{rem:discrete_concepts}
While the stability analysis in discrete-time systems is well studied in linear algebra and operator theory \cite{LaS86} using spectral properties and discrete-time Lyapunov equations, we proceed by studying hypo\-con\-trac\-tiv\-i\-ty---the analogon to the concept of hypocoercivity in continuous time---and relating to these classical concepts.
\end{remark}

\begin{definition}\label{def:disstab}
The trivial solution $x\equiv 0$ of the discrete-time system~\eqref{DS:A} is called \emph{stable} if all solutions of~\eqref{DS:A} are bounded for $k\in\N_0$, and it is called \emph{asymptotically stable} if it is stable and all solutions of~\eqref{DS:A} converge to $0$ for $k\to \infty$.
\end{definition}

For linear systems~\eqref{DS:A} a solution is (asymptotically) stable if and only if the trivial solution $x\equiv 0$ is (asymptotically) stable.
Therefore, if the trivial solution $x\equiv 0$ of~\eqref{DS:A} is (asymptotically) stable then the linear system~\eqref{DS:A} is called (asymptotically) stable.
%

It is well-known that~\eqref{DS:A} is stable if all eigenvalues of~$\mAd$ have modulus less or equal than one and the eigenvalues of modulus one are semi-simple (see~\cite[Theorem 3.3.11]{HiPr10}); and it is \emph{asymptotically stable} if all eigenvalues of~$\mAd$ have modulus strictly less than one.
\begin{definition} 
Let $\mAd\in\Cnn$ have eigenvalues $\lambda_j$, $j=1,\ldots,n$.
The \emph{spectral radius of~$\mAd$} is defined as $\rho(\mAd) :=
\max\{|\lambda_1|\,,\ldots\,,|\lambda_n|\}$, i.e.\ as the largest absolute value of its eigenvalues.
\end{definition}

Hence, a discrete-time system~\eqref{DS:A} is \emph{asymptotically stable} if the spectral radius of $\mAd$ is strictly less than one, $\rho(\mAd)<1$.

An alternative characterization of (asymptotic) stability can be given via the \emph{discrete-time Lyapunov (\emph{or} Stein) equation}:
System~\eqref{DS:A} is asymptotically stable if and only if, for all positive definite Hermitian matrices $\mQ$
\begin{equation}\label{discrete:Lyapunov}
 \mAd^\topH \mP \mAd -\mP =-\mQ
\end{equation}
has a solution $\mP=\mP^\topH>0$, see~\cite[Theorem 3.3.49]{HiPr10} which is formally given by
\begin{equation}\label{discrete:Lyapunov:solution}
 \mP = \sum_{j=0}^\infty (\mAd^\topH)^j \mQ \mAd^j \ ,
\end{equation}
see~\cite[{(89b) in \S3.3.5}]{HiPr10}.
In the discrete-time case the concept of hypocoercivity is replaced by that of hypocontractivity, which we introduce in the next subsection.

\subsection{Hypocontractive matrices and the hypocontractivity index}

For~$\mAd \in\Cnn$ the spectral norm satisfies
\begin{equation}
 \|\mAd\|_2
 =
 \sqrt{\|\mAd^\topH \mAd\|_2}
 =
 \sqrt{\lambda_{\max} \big(\mAd^\topH \mAd\big)}
 =
 \sigma_{\max} (\mAd) \ ,
\end{equation}
where $\lambda_{\max} \big(\mAd^\topH \mAd\big)$ denotes the largest eigenvalue of the positive semi-definite Hermitian matrix $\mAd^\topH \mAd$ and $\sigma_{\max} (\mAd)$ is the largest singular value of $\mAd$.
Then, the estimate $\|\mAd^n\|_2 \leq \|\mAd\|_2^n$ for $n\in\N$ yields that $\sigma_{\max} (\mAd) \leq 1$ is a sufficient condition for the stability of~\eqref{DS:A}.
However, $\sigma_{\max}(\mAd)\leq 1$ is not a necessary condition for~\eqref{DS:A} to be stable.
\begin{example}
The eigenvalues of
\begin{equation} \label{mAd:lambda}
 \mAd(\alpha)
 =
 \alpha
 \begin{bmatrix}
  1 & -2 \\
  0 & -1
 \end{bmatrix}\,,
 \quad \alpha\in\R\,,
\end{equation}
are $\pm\alpha$.
Hence, the discrete-time system~\eqref{DS:A} with matrix~$\mAd$ in~\eqref{mAd:lambda} is stable if and only if $\alpha\in[-1,1]$.
But the matrix
\begin{equation}
 \mAd^\topH \mAd
 =
 \alpha^2
 \begin{bmatrix}
  1 &-2 \\
 -2 & 5
 \end{bmatrix}
\end{equation}
has positive eigenvalues $\mu_{\pm} =\alpha^2 (3\pm\sqrt{8})$ and singular values $\sigma_\pm =\sqrt{\mu_\pm}$ with $\sigma_{\max}(\mAd) =\sigma_+$.
Thus, $\sigma_{\max}(\mAd)\leq 1$ holds if $|\alpha|\leq (3+\sqrt{8})^{-1/2} \leq 1/2$ which is strictly less than one.
Hence in this example, the condition $\sigma_{\max}(\mAd)\leq 1$ is sufficient but not necessary to ensure the stability of~\eqref{DS:A}.
\end{example}

In the following we will need a result relating singular values and eigenvalues.
\begin{proposition} \label{prop:EV+SV}
Let $\mAd\in\Cnn$ have singular values $\sigma_1 \geq \ldots \geq \sigma_n \geq 0$ (such that $\sigma_{\max}(\mAd)=\sigma_1$) and eigenvalues $\lambda_j$, $j=1,\ldots,n$ being ordered as $|\lambda_1| \geq \ldots \geq |\lambda_n|$.
Then, $|\lambda_1|\leq \sigma_1$.
Moreover, if $\mAd$ is nonsingular, then $|\lambda_n|\geq \sigma_n >0$.
\end{proposition}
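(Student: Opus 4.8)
The plan is to prove the two inequalities separately, obtaining the second from the first by passing to the inverse matrix.

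For the upper bound $|\lambda_1|\le\sigma_1$, I would exploit that $\sigma_1=\sigma_{\max}(\mAd)=\|\mAd\|_2$ is the operator norm induced by the Euclidean norm, together with the standard fact that the spectral radius is dominated by any such induced norm. Concretely, let $v\in\Cn$ be a normalized eigenvector of $\mAd$ associated with an eigenvalue of largest modulus, so that $\mAd v=\lambda_1 v$ and $\|v\|=1$. Then
\[
 |\lambda_1| = \|\lambda_1 v\| = \|\mAd v\| \le \|\mAd\|_2\,\|v\| = \sigma_1 ,
\]
which is the claimed bound.

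For the lower bound I would assume $\mAd$ nonsingular. Then every singular value is positive, in particular $\sigma_n>0$, and every eigenvalue is nonzero, so $\mAd^{-1}$ exists. The key observation is that from a singular value decomposition $\mAd=\mU\Sigma\mV^\topH$ one gets $\mAd^{-1}=\mV\Sigma^{-1}\mU^\topH$, so the singular values of $\mAd^{-1}$ are the reciprocals of those of $\mAd$ taken in reverse order, $\sigma_j(\mAd^{-1})=\sigma_{n+1-j}(\mAd)^{-1}$; in particular $\sigma_{\max}(\mAd^{-1})=\|\mAd^{-1}\|_2=\sigma_n^{-1}$. Likewise, the eigenvalues of $\mAd^{-1}$ are the reciprocals $\lambda_j^{-1}$, so the eigenvalue of $\mAd^{-1}$ of largest modulus has modulus $(\min_j|\lambda_j|)^{-1}=|\lambda_n|^{-1}$.

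Applying the already-established upper bound to the matrix $\mAd^{-1}$ then yields
\[
 |\lambda_n|^{-1} \le \sigma_{\max}(\mAd^{-1}) = \sigma_n^{-1} ,
\]
and taking reciprocals gives $|\lambda_n|\ge\sigma_n>0$, as desired. I do not anticipate a genuine obstacle here, as this is a classical comparison between eigenvalues and singular values; the only point demanding care is the bookkeeping that relates the singular values of $\mAd^{-1}$ to those of $\mAd$ (reciprocation together with reversal of the ordering), which follows immediately from the singular value decomposition displayed above.
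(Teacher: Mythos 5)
Your argument is correct. Note, though, that the paper does not actually prove this proposition at all: it simply cites the bounds in Theorem~5.6.9 of Horn and Johnson, which is precisely the standard comparison between eigenvalue moduli and singular values. What you have written is, in effect, a self-contained elementary proof of the cited fact: the upper bound $|\lambda_1|\leq\sigma_1$ via the submultiplicativity of the induced norm applied to a unit eigenvector, and the lower bound $|\lambda_n|\geq\sigma_n$ by applying that same upper bound to $\mAd^{-1}$, using that the singular values of $\mAd^{-1}$ are the reversed reciprocals of those of $\mAd$ and its eigenvalues the reciprocals $\lambda_j^{-1}$. Both steps are sound, and the bookkeeping you flag (reciprocation plus reversal of ordering in the SVD of the inverse) is handled correctly. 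So your proof buys self-containedness at essentially no cost, whereas the paper buys brevity by outsourcing the statement to a reference; there is no gap in either.
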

\begin{proof}
The statements follow from the bounds in~\cite[Theorem 5.6.9]{HoJo13}. 
\end{proof}

We then have the following discrete-time analogon of semi-dissipativity.
\begin{definition}[{\cite[Definition 4.1.2]{Be18}}]
Let $\mAd\in\Cnn$ and let $\sigma_{\max}(\mAd)$ be the largest singular value (the \emph{spectral norm}) of $\mAd$.
We call $\mAd$ \emph{contractive} if $\sigma_{\max}(\mAd) <1$;
and we call $\mAd$ \emph{semi-contractive} if $\sigma_{\max}(\mA) \leq 1$.
\end{definition}

Note that sometimes $\mAd$ is called \emph{contractive} if $\sigma_{\max}(\mAd)\leq 1$;
and $\mAd$ is called \emph{strictly contractive} if $\sigma_{\max}(\mAd)<1$, see e.g.~\cite[p. 493]{HoJo13}.
Other related notions are (semi-)convergent matrices, and power-bounded matrices, see~\cite[p. 180]{HoJo13}.

In the following, we consider the class of semi-contractive matrices~$\mAd$ and present a characterization when~\eqref{DS:A} is (asymptotically) stable.
For this we need a concept that corresponds to hypocoercivity in the continuous-time case.
%
\begin{definition} \label{def:HypoContractive}
A matrix $\mAd\in\Cnn$ is called \emph{hypocontractive} if all eigenvalues of $\mAd$ have modulus strictly less than one.
\end{definition}

Consequently, a discrete-time system~\eqref{DS:A} is asymptotically stable if and only if the system matrix~$\mAd$ is hypocontractive.
We can also characterize those semi-contractive matrices~$\mA$ which are actually hypocontractive:
%
\begin{proposition}\label{prop:Semi+HypoContractive}
Let $\mAd\in\Cnn$ be semi-contractive.
Then, $\mAd$ has an eigenvalue of modulus one if and only if some eigenvector~$v$ of $\mAd$ satisfies~$v\in\kernel(\mI-\mAd^\topH\mAd)$.
\end{proposition}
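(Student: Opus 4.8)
The plan is to read the hypothesis $\sigma_{\max}(\mAd)\le1$ as the positive semi-definiteness of the Hermitian matrix $\mM:=\mI-\mAd^\topH\mAd$, whose eigenvalues are exactly $1-\sigma_j^2\ge0$. Under this sign condition the kernel $\kernel(\mM)$ coincides with the set of norm-preserving vectors: for any $v$, the membership $v\in\kernel(\mM)$ is equivalent to the scalar identity $\langle v,\mM v\rangle=0$, i.e.\ to $\|\mAd v\|=\|v\|$. This reformulation is the bridge between the spectral statement (an eigenvalue of modulus one) and the kernel statement, and I would set it up first.

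For the implication ``kernel condition $\Rightarrow$ eigenvalue of modulus one'' I would argue directly, without even invoking semi-contractivity: if $\mAd v=\lambda v$ with $v\ne0$ and $v\in\kernel(\mM)$, then $\|v\|^2=\langle v,\mAd^\topH\mAd v\rangle=\|\mAd v\|^2=|\lambda|^2\|v\|^2$, so $|\lambda|=1$. For the converse, I would take an eigenvector $\mAd v=\lambda v$ with $v\ne0$ and $|\lambda|=1$; then $\|\mAd v\|=|\lambda|\,\|v\|=\|v\|$ gives $\langle v,\mM v\rangle=\|v\|^2-\|\mAd v\|^2=0$, and the goal is to upgrade this to $\mM v=0$.

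The hard part---and the only place where semi-contractivity is genuinely used---is precisely this last upgrade $\langle v,\mM v\rangle=0\Rightarrow\mM v=0$. It holds because $\mM\ge0$: writing $\mM=\mN^\topH\mN$ with $\mN:=\mM^{1/2}$, the vanishing quadratic form reads $\|\mN v\|^2=0$, hence $\mN v=0$ and therefore $\mM v=\mN^\topH\mN v=0$. Thus the same eigenvector $v$ already lies in $\kernel(\mM)$, which completes the converse. Without the positive semi-definiteness of $\mM$ this implication would fail, so the sign condition furnished by semi-contractivity is essential rather than merely convenient.
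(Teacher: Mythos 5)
Your proposal is correct and follows essentially the same route as the paper: identify $\mI-\mAd^\topH\mAd$ as positive semi-definite, compute $\langle v,(\mI-\mAd^\topH\mAd)v\rangle=\|v\|^2(1-|\lambda|^2)$ for an eigenvector, and use semi-definiteness to pass between the vanishing of the quadratic form and membership in the kernel. The only difference is that you make the upgrade $\langle v,\mM v\rangle=0\Rightarrow\mM v=0$ explicit via the square root $\mM^{1/2}$, whereas the paper leaves that standard step implicit.
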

\begin{proof}
Since $\mAd$ is semi-contractive, the Hermitian matrix~$\mI -\mAd^\topH\mAd$ is positive semi-definite.
Moreover, if~$\mAd$ has an eigenvalue~$\lambda$ of modulus $|\lambda|=1$ with eigenvector~$v\ne 0$, then 
\[
 0
 \leq \ip{v}{(\mI-\mAd^\topH\mAd)v}
 =\|v\|^2 -\|\mAd v\|^2
 =\|v\|^2 (1-|\lambda|^2)
 =0 \,.
\]
Therefore, $v$ is in the kernel of the positive semi-definite Hermitian matrix~$\mI-\mAd^\topH\mAd$.

Conversely, if some eigenvector~$v$ of~$\mAd$ (associated to an eigenvalue~$\lambda$) satisfies $v\in\kernel(\mI-\mAd^\topH\mAd)$, then
\[
 0
 =\ip{v}{(\mI-\mAd^\topH\mAd)v}
 =\|v\|^2 -\|\mAd v\|^2
 =\|v\|^2 (1-|\lambda|^2) \,,
\]
and hence, the eigenvalue~$\lambda$ has modulus one.
\end{proof}

\begin{remark}\label{rem:defect}
In the operator theory setting the matrix $(\mI-\mAd^\topH\mAd)^{1/2}$ is often called the \emph{defect operator} of the semi-contractive $\mA_d$ and the closure of its image is the \emph{defect space} with its dimension being called the \emph{defect index} $d(\mAd)$.  The defect operator and its index are a measure for the distance of an operator from being unitary. See e.g. \cite{NagFK10}.
\end{remark}


We again have an equivalent characterization in terms of properties from control theory:
\begin{lemma} \label{lem:mDHC:Equivalence}
Let $\mAd\in\Cnn$ be semi-contractive.
Then the following conditions are equivalent:
\begin{itemize}
\item [(D1)] 
There exists $m\in\N_0$ such that
\begin{equation}\label{condition:KalmanRank:D}
 \rank[(\mI -\mAd^\topH\mAd),\mAd^\topH (\mI -\mAd^\topH\mAd),\ldots,(\mAd^\topH)^m (\mI -\mAd^\topH\mAd)]=n \,.
\end{equation}
\item [(D2)] 
There exists $m\in\N_0$ such that
\begin{equation}\label{Dm:A}
 \mD_m
 := \sum_{j=0}^m (\mAd^\topH)^j (\mI -\mAd^\topH\mAd) \mAd^j
 >0 \,.
\end{equation}
\item [(D3)] 
No eigenvector of~$\mAd$ lies in the kernel of~$(\mI -\mAd^\topH\mAd)$.
\item [(D4)] 
$\rank [\lambda \mI-\mAd^\topH, \mI -\mAd^\topH\mAd] =n$ for every $\lambda \in \C$, in particular for every eigenvalue~$\lambda$ of~$\mAd^\topH$.
\end{itemize}
Moreover, the smallest possible~$m\in\N_0$ in~(D1) 
and~(D2) 
coincide. 
\end{lemma}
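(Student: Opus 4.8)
The plan is to reduce the statement to the classical controllability theory that already underlies Lemma~\ref{lem:HC:equivalence}, by reading off the correct control-theoretic pair. Since $\mAd$ is semi-contractive, the Hermitian matrix $\mD := \mI - \mAd^\topH\mAd$ is positive semi-definite. Setting $\mC := \mAd^\topH$, conditions (D1)--(D4) are precisely the Kalman rank condition, the (finite-horizon reachability) Gramian positivity $\sum_{j=0}^m \mC^j\mD(\mC^\topH)^j > 0$, the eigenvector test, and the Popov--Belevitch--Hautus (PBH) test for the pair $(\mC,\mD)$. Indeed, the controllability matrix of $(\mC,\mD)$ is $[\mD,\mC\mD,\dots,\mC^m\mD]$, which is exactly~\eqref{condition:KalmanRank:D}, and $(\mC^\topH)^j = \mAd^j$ turns the Gramian sum into $\mD_m$ of~\eqref{Dm:A}.

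First I would invoke the classical equivalence of (D1), (D3), and (D4): these are the Kalman, eigenvector, and PBH characterizations of controllability of $(\mC,\mD)$, whose proof (e.g.\ \cite[Theorem~6.2.1]{Da04}, extending verbatim to complex matrices; see also \cite[Proposition~1]{AAC18}) is identical to the one cited for Lemma~\ref{lem:HC:equivalence}. The one point demanding care---and the only genuine difference from the continuous-time case---is the appearance of $\mAd$ rather than $\mC = \mAd^\topH$ in (D3). This is not an oversight: the PBH eigenvector test for $(\mC,\mD)$ concerns left eigenvectors $w$ of $\mC$, i.e.\ vectors with $\mC^\topH w = \bar\lambda w$, that is (right) eigenvectors of $\mC^\topH = \mAd$; and the condition $w^\topH\mD = 0$ is, because $\mD$ is Hermitian, the same as $w \in \kernel(\mD) = \kernel(\mI - \mAd^\topH\mAd)$. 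Hence the eigenvector condition for the pair $(\mAd^\topH,\mI-\mAd^\topH\mAd)$ is literally (D3). In the continuous-time setting this subtlety was invisible because $\mBS^\topH = -\mBS$ shares its eigenvectors with $\mBS$.

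Finally I would obtain the equivalence of (D1) and (D2), together with the claim that the smallest admissible $m$ in the two coincide, from the Appendix Lemma~\ref{lem:Definiteness} applied with $\mC := \mAd^\topH$ and $\mD := \mI - \mAd^\topH\mAd$---exactly as (B1)$\Leftrightarrow$(B2) was deduced in Lemma~\ref{lem:HC:equivalence}. The positive semi-definiteness of $\mD$ guaranteed by semi-contractivity is what makes that lemma applicable. I expect no serious obstacle beyond the careful $\mAd$-versus-$\mAd^\topH$ bookkeeping flagged above; everything else is a transcription of the continuous-time argument under the dictionary $\mBS \mapsto \mAd^\topH$, $\mBH \mapsto \mI - \mAd^\topH\mAd$.
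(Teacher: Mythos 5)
Your proof is correct and takes essentially the same route as the paper's, which likewise reads off the pair $(\mAd^\topH,\ \mI-\mAd^\topH\mAd)$, cites Theorem~6.2.1 of \cite{Da04} for the equivalence of (D1), (D3), (D4), and applies Lemma~\ref{lem:Definiteness} with $\mC:=\mAd^\topH$, $\mD:=\mI-\mAd^\topH\mAd$ to get (D1)$\Leftrightarrow$(D2) with coinciding minimal~$m$. Your explicit verification that the eigenvector test lands on eigenvectors of $\mAd$ rather than of $\mAd^\topH$ (via the left-eigenvector form of the PBH test and the Hermitian symmetry of $\mD$) is a detail the paper leaves implicit, and you resolve it correctly.
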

\begin{proof}
Like Lemma \ref{lem:HC:equivalence}, this result follows from Theorem~6.2.1 of~\cite{Da04}  and Lemma \ref{lem:Definiteness} in the Appendix.
\end{proof}

\begin{remark}\label{rem:unobservability}
In control theory, conditions (D1), (D3), and (D4)  in Lemma~\ref{lem:mDHC:Equivalence} are equivalent characterizations of the \emph{controllability} of the pair $(\mAd^\topH, \mI -\mAd^\topH\mAd)$, or the dynamical system
\[
x_{k+1}= \mAd^\topH x_k+ (\mI -\mAd^\topH\mAd)u_k.
\]
There is always also the dual concept of \emph{observability} which in this case would correspond to the controllability of $(\mAd, \mI -\mAd\mAd^\topH)$.
A dual result to Lemma~\ref{lem:mDHC:Equivalence} can then be formulated with this pair. Based on this pair, in \cite{Sta05} a similar result has been derived (in different terminology).
A similar result for the continuous-time case follows from \cite{Sta03}.
\end{remark}

If we compare Lemma~\ref{lem:mDHC:Equivalence} with Lemma~\ref{lem:HC:equivalence}, then we need to substitute $\mBS$ with $\mAd^\topH$, and $\mBH$ with $\mI-\mAd^\topH\mAd$, respectively.
%
Using Lemma~\ref{lem:mDHC:Equivalence} (D2), we then define the hypocontractivity index.  
\begin{definition}
For semi-contractive matrices $\mAd\in\Cnn$, 
we define the \emph{hypocontractivity index} or \emph{discrete HC-index (dHC-index)~$\mDHC$} as the smallest integer $m\in\N_0$ (if it exists) such that~\eqref{Dm:A} holds.
\end{definition}

\begin{remark}\label{rem:alternative notions}
The  hypocontractivity index is sometimes also called the \emph{norm-one index}, see \cite{GauW10}, where it is shown that this index is finite if and only if the spectral radius of $\mAd$ is strictly smaller than one.
\end{remark}

Clearly, a semi-contractive matrix $\mAd$ is contractive if and only if $\mDHC=0$.
Since \eqref{Dm:A} is a telescopic sum, we have that $\mD_m =\mI -(\mAd^\topH)^{m+1} \mAd^{m+1}$ and thus if a semi-contractive matrix $\mAd\in\Cnn$ is hypocontractive with index~$\mDHC\in\N_0$, then $\mAd^{\mDHC+1}$ is contractive.
Conversely, if a semi-contractive matrix $\mAd\in\Cnn$ satisfies that $\mAd^m$ is contractive for some $m\in\N$, then $\mAd$ is hypocontractive with index $\mDHC\le m-1$.\\

The following result may be considered as a discrete counterpart of the short-time decay behavior from Proposition \ref{prop:ODE-short}.
%
%
%
\begin{theorem}\label{DS:A:semi-contractive:power-bounds}
Let $\mAd\in\Cnn$ be semi-contractive and hypocontractive.
Its (finite) hypocontractivity index is $\mDHC\in\N_0$ if and only if
\begin{equation} \label{DS:A:decay}
 \|\mAd^j\|_2 =1 \text{ for all } j=1,\ldots,\mDHC\,,
 \quad\text{and }\
 \|\mAd^{\mDHC+1}\|_2 <1 \,.
\end{equation}
\end{theorem}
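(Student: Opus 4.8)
The plan is to exploit the telescoping identity already noted in the text, namely that $\mD_m = \mI - (\mAd^\topH)^{m+1}\mAd^{m+1}$, which directly ties the defining quantity of the dHC-index to the spectral norms of the powers $\mAd^{j}$. The key observation is that $\mD_{m-1} > 0$ is equivalent to $(\mAd^\topH)^{m}\mAd^{m} < \mI$, which in turn is equivalent to $\sigma_{\max}(\mAd^{m})^2 < 1$, i.e. $\|\mAd^{m}\|_2 < 1$. Thus the definition of the dHC-index can be completely recast in terms of the norms of the powers, and the theorem becomes an elementary unpacking of this equivalence.

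Concretely, I would argue as follows. Since $\mAd$ is semi-contractive, $\mI - \mAd^\topH\mAd \geq 0$, and hence each summand $(\mAd^\topH)^j(\mI - \mAd^\topH\mAd)\mAd^j$ is positive semi-definite, so the sequence $\mD_m$ is non-decreasing in $m$ in the Loewner order. This monotonicity shows that once $\mD_m > 0$ holds it continues to hold for all larger indices, so the smallest such $m$ is well-defined and the condition $\mD_{j} > 0$ fails precisely for $j < \mDHC$. Using the telescoping identity, $\mD_{j} > 0 \iff (\mAd^\topH)^{j+1}\mAd^{j+1} < \mI \iff \lambda_{\max}\big((\mAd^\topH)^{j+1}\mAd^{j+1}\big) < 1 \iff \|\mAd^{j+1}\|_2 < 1$. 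Setting $j = \mDHC$ gives $\|\mAd^{\mDHC+1}\|_2 < 1$, and for each $j < \mDHC$ the failure of $\mD_{j} > 0$ combined with semi-contractivity ($\|\mAd^{j+1}\|_2 \leq \|\mAd\|_2^{\,j+1} \leq 1$) forces $\|\mAd^{j+1}\|_2 = 1$. This establishes that a finite dHC-index equal to $\mDHC$ implies \eqref{DS:A:decay}.

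For the converse I would simply reverse the chain: if \eqref{DS:A:decay} holds, then $\|\mAd^{\mDHC+1}\|_2 < 1$ gives $(\mAd^\topH)^{\mDHC+1}\mAd^{\mDHC+1} < \mI$, hence $\mD_{\mDHC} > 0$; while $\|\mAd^{j}\|_2 = 1$ for $j \leq \mDHC$ gives $(\mAd^\topH)^{j}\mAd^{j} \not< \mI$, hence $\mD_{j-1} \not> 0$, so no smaller index works and the dHC-index is exactly $\mDHC$. One subtle point deserving care in writing is the boundary case $\mDHC = 0$, where the list in \eqref{DS:A:decay} of indices $j = 1,\ldots,\mDHC$ is empty and the statement reduces to $\|\mAd\|_2 = \sigma_{\max}(\mAd) < 1$, i.e. $\mAd$ is contractive, consistent with the remark that $\mDHC = 0$ characterizes contractivity.

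The main obstacle is not conceptual but rather a matter of correctly handling the Loewner-order reasoning for the ``$\not>0$'' direction: one must argue that $\mD_{j} \not> 0$ means $\mD_{j}$ has a nontrivial kernel (since it is positive semi-definite and not positive definite), and that this kernel corresponds to a unit vector $v$ with $\|\mAd^{j+1} v\| = \|v\|$, which forces $\|\mAd^{j+1}\|_2 = 1$ given the semi-contractive upper bound. I would present this via the eigenvalue formulation $\|\mAd^{j+1}\|_2^2 = \lambda_{\max}\big((\mAd^\topH)^{j+1}\mAd^{j+1}\big)$ together with the equivalence $\mathbf{X} < \mI \iff \lambda_{\max}(\mathbf{X}) < 1$ for Hermitian $\mathbf{X}$, which makes each step transparent and avoids any delicate kernel bookkeeping.
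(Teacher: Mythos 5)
Your proposal is correct and follows essentially the same route as the paper's own proof: both rest on the telescoping identity $\mD_m=\mI-(\mAd^\topH)^{m+1}\mAd^{m+1}$ together with the bound $\|\mAd^{j}\|_2\le\|\mAd\|_2^{j}\le 1$ from semi-contractivity, so that $\mD_m>0$ is equivalent to $\|\mAd^{m+1}\|_2<1$ while failure of positive definiteness forces $\|\mAd^{m+1}\|_2=1$. Your write-up merely makes explicit the monotonicity of $\mD_m$ in the Loewner order and the boundary case $\mDHC=0$, which the paper leaves implicit.
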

\begin{proof}
The spectral norm $\|\mC\|_2$ of a matrix $\mC\in\Cnn$, i.e.\ the operator norm induced by the Euclidean norm on~$\Cn$ is given by
$\|\mC\|_2 =\max_{w\in\Cn:\ \|w\|=1} \|\mC w\|_2$.
If a matrix $\mAd$ is semi-contractive, then the estimates $\|\mAd\|_2\leq 1$ and $\|\mAd^j\|_2 \leq\|\mAd\|_2^j\leq 1$ hold for all $ j\in\N$.
Thus, for vectors $w\in\Cn$ with $\|w\|_2=1$, we have
\[
 \ip{w}{(\mAd^\topH)^j \mAd^j w} =\ip{\mAd^j w}{\mAd^j w} =\|\mAd^j w\|_2^2
\leq
 1 =\ip{w}{w} \,,
\]
such that $0\leq \ip{w}{(\mI -(\mAd^\topH)^j \mAd^j) w}$. 
Therefore, for all $m\in\N_0$,
\[
 \mD_m
 =\sum_{j=0}^m (\mAd^\topH)^j (\mI -\mAd^\topH\mAd) \mAd^j
 =\mI -(\mAd^\topH)^{m+1} \mAd^{m+1}
 \geq 0 \,
\]
and hence, the semi-contractive matrix $\mAd$ has (finite) hypocontractivity index $\mDHC$ if and only if~\eqref{DS:A:decay} holds.
\end{proof}

We summarize the relationship between the different concepts discussed in this section in Figure~\ref{fig:VennDiagramA}.

\begin{figure}
\includegraphics[width=\textwidth]{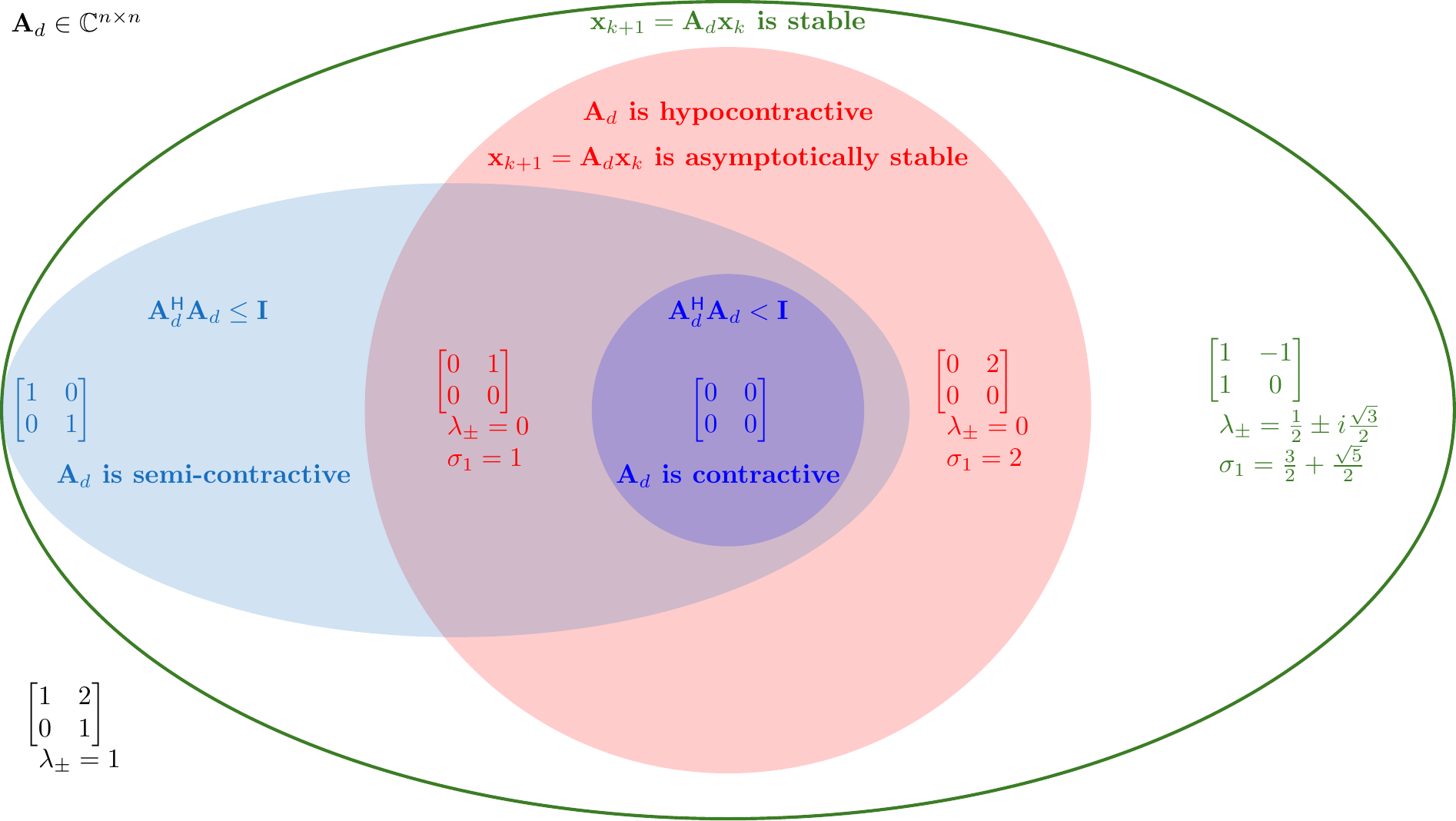}
\caption{Relationship between sets of matrices $\mAd\in\Cnn$ which are (hypo)contractive (circular discs), semi-contractive (region within smaller ellipse) and those for which the discrete-time system $x_{k+1}=\mAd x_k$ is stable (region within bigger ellipse), respectively.}
 \label{fig:VennDiagramA}
\end{figure}
%

\subsection{Polar decomposition}\label{ssec:DS:PD}

In \cite{AAM21} a computationally feasible procedure  has been presented to check the conditions of Lemma~\ref{lem:HC:equivalence} in the continuous-time case via a staircase form under unitary congruence transformations.
A similar procedure can be derived in the discrete-time case.
It is based on polar decomposition, see e.g.~\cite[Theorem 7.3.1]{HoJo13}, which is the discrete-time analogon of the additive splitting of a matrix into its Hermitian and skew-Hermitian part:
%
\begin{proposition}[Polar decomposition]\label{prop:PolDec}
Let $\mAd\in\Cnn$.
\begin{itemize}
\item [(a)]
There exist positive semi-definite Hermitian matrices $\mPd,\mQd\in\Cnn$ and a unitary matrix~$\mUd\in\Cnn$ such that
\begin{equation}\label{PD:A}
 \mAd
 =\mPd \mUd
 =\mUd \mQd \ .
\end{equation}
The factors $\mPd$, $\mQd$ are uniquely determined as $\mPd =(\mAd \mAd^\topH)^{1/2}$ and $\mQd =(\mAd^\topH \mAd)^{1/2}$.
If $\mAd$ is nonsingular, then $\mUd =\mPd^{-1}\mAd =\mAd\mQd^{-1}$ is uniquely determined (as well).
\item [(b)]
If $\mAd$ is real, then the factors $\mPd$, $\mQd$ and $\mUd$ may be taken to be real.
\end{itemize}
\end{proposition}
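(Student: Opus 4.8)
The plan is to derive everything from the singular value decomposition (SVD), which exists for every $\mAd\in\Cnn$ (e.g.\ \cite[Theorem~2.6.3]{HoJo13}). First I would write $\mAd =\mW\Sigma\mV^\topH$ with $\mW,\mV\in\Cnn$ unitary and $\Sigma\in\Cnn$ the diagonal matrix carrying the singular values $\sigma_1\geq\cdots\geq\sigma_n\geq 0$ of $\mAd$ on its diagonal. The key algebraic move is to insert $\mW^\topH\mW =\mI$ (resp.\ $\mV^\topH\mV =\mI$) in the right place: writing $\mAd =(\mW\Sigma\mW^\topH)(\mW\mV^\topH)$ yields the left factorization with $\mPd :=\mW\Sigma\mW^\topH$ and $\mUd :=\mW\mV^\topH$, while writing $\mAd =(\mW\mV^\topH)(\mV\Sigma\mV^\topH)$ yields the right factorization with $\mQd :=\mV\Sigma\mV^\topH$ and the \emph{same} unitary factor $\mUd$. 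Since $\Sigma\geq 0$, both $\mPd$ and $\mQd$ are positive semi-definite Hermitian by construction, and $\mUd$ is unitary as a product of unitaries, which establishes the existence claim in~\eqref{PD:A}.

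Next I would identify the factors with the asserted square roots and deduce their uniqueness. A direct computation gives $\mPd^2 =\mW\Sigma^2\mW^\topH =\mAd\mAd^\topH$ and $\mQd^2 =\mV\Sigma^2\mV^\topH =\mAd^\topH\mAd$. Because the positive semi-definite square root of a positive semi-definite Hermitian matrix is unique (via the spectral theorem), it follows that $\mPd =(\mAd\mAd^\topH)^{1/2}$ and $\mQd =(\mAd^\topH\mAd)^{1/2}$ are uniquely determined, independently of the particular SVD chosen. For the nonsingular case, $\mAd$ invertible forces all $\sigma_j>0$, so $\mPd$ and $\mQd$ are positive definite and hence invertible; then $\mUd =\mPd^{-1}\mAd$ is forced by $\mAd =\mPd\mUd$, and likewise $\mUd =\mAd\mQd^{-1}$ is forced by $\mAd =\mUd\mQd$, which gives uniqueness of $\mUd$.

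For part (b) I would simply invoke the real version of the SVD: if $\mAd$ is real then $\mW,\mV$ may be taken real orthogonal and $\Sigma$ is already real, so the formulas above produce real factors $\mPd,\mQd,\mUd$. I expect the only genuine subtlety to be the uniqueness of the positive semi-definite square root---the single nontrivial ingredient---while everything else is bookkeeping with unitary factors. As an alternative one could bypass the SVD and define $\mPd :=(\mAd\mAd^\topH)^{1/2}$ directly, then construct $\mUd$ first on $\ran(\mPd)$ via $\mUd\,\mPd v =\mAd v$ and extend it to a unitary on all of $\Cn$; this works, but handling the singular case requires extra care in checking that the partial isometry extends, so the SVD route is the cleaner one.
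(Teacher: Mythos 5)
The paper gives no proof of this proposition at all --- it is quoted as a classical result with a pointer to \cite[Theorem 7.3.1]{HoJo13} --- and your SVD-based argument is correct, complete, and is essentially the standard proof found in that reference: insert $\mW^\topH\mW$ (resp.\ $\mV^\topH\mV$) into the SVD to obtain both factorizations with the same unitary factor $\mUd=\mW\mV^\topH$, identify $\mPd$ and $\mQd$ as the unique positive semi-definite square roots of $\mAd\mAd^\topH$ and $\mAd^\topH\mAd$, and read off uniqueness of $\mUd$ in the nonsingular case and reality of all factors from the real SVD. Nothing is missing.
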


Consider a stable discrete-time system~\eqref{DS:A} with matrix~$\mAd$.
Hence, all eigenvalues of matrix~$\mAd$ have modulus less or equal than one.
Then, the polar decomposition~\eqref{PD:A} yields that the (largest) singular values of $\mAd$, $\mPd$ and $\mQd$ are the same, since $\mAd \mAd^\topH =\mPd \mPd^\topH$ and $\mAd^\topH \mAd =\mQd^\topH \mQd$. 

An immediate consequence is that a matrix
$\mAd\in\Cnn$ with polar decomposition~\eqref{PD:A}
is semi-contractive if and only if the spectra of $\mPd$ and $\mQd$ (which coincide) are contained in $[0,1]$.
%
%
We can rephrase the statement of Proposition~\ref{prop:Semi+HypoContractive} as follows:
\begin{proposition}\label{prop:Semi+HypoContractive:UQ}
Let $\mAd\in\Cnn$ be semi-contractive with polar decomposition $\mAd =\mUd\mQd$ and $\mQd =(\mAd^\topH \mAd)^{1/2}$.
Then, $\mAd$ has an eigenvalue of modulus one (and hence $\mAd$ is not hypocontractive)
if and only if some eigenvector~$v$ of $\mUd$ satisfies~$v\in\kernel(\mI-\mQd)$.
\end{proposition}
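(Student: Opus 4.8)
The plan is to reduce this to the already-established Proposition~\ref{prop:Semi+HypoContractive} by rewriting both of its conditions in terms of the polar factors $\mUd$ and $\mQd$. Since $\mUd$ is unitary and $\mQd$ is Hermitian, the polar decomposition first gives
\[
 \mAd^\topH\mAd
 =\mQd^\topH \mUd^\topH \mUd \mQd
 =\mQd^2 \,,
\]
so that $\mI-\mAd^\topH\mAd =\mI-\mQd^2 =(\mI-\mQd)(\mI+\mQd)$.

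Next I would show that $\kernel(\mI-\mAd^\topH\mAd) =\kernel(\mI-\mQd)$. Because $\mQd=(\mAd^\topH\mAd)^{1/2}$ is positive semi-definite its eigenvalues $\mu$ are non-negative (and, by semi-contractivity, lie in $[0,1]$), so $\mI+\mQd$ has eigenvalues $\geq 1$ and is invertible. From the factorization above a vector then lies in $\kernel(\mI-\mQd^2)$ exactly when it lies in $\kernel(\mI-\mQd)$; equivalently, on an eigenvector of the Hermitian matrix $\mQd$ with eigenvalue $\mu\geq 0$ both $1-\mu^2$ and $1-\mu$ vanish precisely when $\mu=1$.

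The key observation is the third step: on the subspace $\kernel(\mI-\mQd)$ we have $\mQd v=v$, hence $\mAd v=\mUd\mQd v=\mUd v$, so $\mAd$ and $\mUd$ act identically there. Consequently a vector $v\in\kernel(\mI-\mQd)$ is an eigenvector of $\mAd$ if and only if it is an eigenvector of $\mUd$, with the same eigenvalue. Combining the three steps, the condition ``some eigenvector $v$ of $\mAd$ satisfies $v\in\kernel(\mI-\mAd^\topH\mAd)$'' from Proposition~\ref{prop:Semi+HypoContractive} is equivalent to ``some eigenvector $v$ of $\mUd$ satisfies $v\in\kernel(\mI-\mQd)$'', which is exactly the new criterion; invoking Proposition~\ref{prop:Semi+HypoContractive} then ties this to $\mAd$ having an eigenvalue of modulus one.

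I expect the only delicate point to be the passage between eigenvectors of $\mAd$ and eigenvectors of $\mUd$ in the third step, which relies on $\mQd$ restricting to the identity on $\kernel(\mI-\mQd)$; once the identity $\mAd^\topH\mAd=\mQd^2$ is in place, the remaining verifications are routine.
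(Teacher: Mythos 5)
Your proposal is correct and follows essentially the same route as the paper: both reduce to Proposition~\ref{prop:Semi+HypoContractive} via the factorization $\mI-\mAd^\topH\mAd=(\mI+\mQd)(\mI-\mQd)$, the invertibility of $\mI+\mQd$ giving $\kernel(\mI-\mAd^\topH\mAd)=\kernel(\mI-\mQd)$, and the observation that $\mAd$ and $\mUd$ coincide on that kernel. The only cosmetic difference is that you package the argument as a single chain of equivalences, whereas the paper argues the two implications separately (handling the converse directly from $\mUd w=\lambda w$, $|\lambda|=1$, without re-invoking the earlier proposition).
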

\begin{proof}
For the forward direction we assume that the eigenvalue equation $\mAd v=\lambda v$ holds for some $\lambda$ with $|\lambda|=1$ and $v\in\Cn\setminus\{0\}$.
Then, Proposition~\ref{prop:Semi+HypoContractive} implies that $v\in\ker(\mI-\mAd^\topH\mAd)$, i.e.\ $0=(\mI-\mAd^\topH\mAd)v = (\mI+\mQd)(\mI-\mQd)v$ which holds if and only if $0=(\mI-\mQd)v$, such that  $0=\mUd (\mI-\mQd)v=\mUd v-\lambda v$.
Hence, $v\in\kernel(\mI-\mQd)$ is an eigenvector of $\mUd$.

Conversely, let $w$ be an eigenvector of $\mUd$, i.e.\ $ \mUd w=\lambda w$ with $|\lambda|=1$, that satisfies $(\mI-\mQd)w=0$.
Then $0=\mUd (\mI-\mQd)w=\lambda w -\mAd w$.
\end{proof}

Note that, for semi-contractive matrices~$\mAd$, eigenvalues with modulus one are necessarily semi-simple.
Therefore, a semi-contractive matrix~$\mAd$ (with polar decomposition $\mAd=\mUd\mQd$) is hypocontractive if and only if no eigenvector of~$\mAd$ lies in the kernel of the positive semi-definite Hermitian matrix~$\mI -\mQd$.

Using this relationship,  we formulate an analogous result to Lemma~\ref{lem:mDHC:Equivalence}, in terms of matrices appearing in polar decompositions. It follows again from Theorem 6.2.1 of \cite{Da04} and Lemma \ref{lem:Definiteness}:

\begin{lemma} \label{lem:mDHC:Equivalence:HU}
Let $\mAd\in\Cnn$ be semi-contractive with polar decomposition $\mAd =\mUd\mQd$ (i.e.\ with $\mUd$ unitary, $\mQd$ semi-contractive Hermitian, and $\mQd^2=\mAd^\topH\mAd$).
Then the following conditions are equivalent:
\begin{itemize}
\item [(D1')] 
There exists $m\in\N_0$ such that
\begin{equation}\label{condition:KalmanRank:HU}
 \rank[(\mI-\mQd^2),\mUd^\topH(\mI-\mQd^2),\ldots,(\mUd^\topH)^m (\mI-\mQd^2)]=n \,.
\end{equation}
\item [(D2')]
There exists $m\in\N_0$ such that
\begin{equation}\label{Dm:HU}
 \hD_m :=\sum_{j=0}^m (\mUd^\topH)^j (\mI-\mQd^2) \mUd^j > 0 \,.
\end{equation}
\item [(D3')] 
No eigenvector of~$\mUd$ lies in the kernel of $\mI-\mQd^2$.
\item [(D4')] 
$\rank [\lambda \mI-\mUd^\topH, \mI-\mQd^2] =n$ for every $\lambda \in \C$, in particular for every eigenvalue~$\lambda$ of~$\mUd^\topH$.
\end{itemize}
Moreover, the smallest possible~$m\in\N_0$ in (D1') and (D2')
coincide. 
\end{lemma}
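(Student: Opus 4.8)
The plan is to reduce Lemma~\ref{lem:mDHC:Equivalence:HU} to the already-proven Lemma~\ref{lem:mDHC:Equivalence} by exploiting the polar decomposition structure, rather than re-deriving everything from the control-theoretic result of \cite{Da04}. The key observation is that the matrices appearing in (D1')--(D4') are built from $\mUd^\topH$ and $\mI-\mQd^2$, whereas those in (D1)--(D4) are built from $\mAd^\topH$ and $\mI-\mAd^\topH\mAd$. Since $\mQd^2 = \mAd^\topH\mAd$, the Hermitian factor already agrees: $\mI - \mQd^2 = \mI - \mAd^\topH\mAd$. Thus the only genuine task is to relate the unitary factor $\mUd$ to $\mAd$ itself on the relevant subspaces, and to justify that substituting $\mUd^\topH$ for $\mAd^\topH$ leaves each of the four conditions unchanged.

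First I would handle the nonsingular case, where $\mUd = \mAd\mQd^{-1}$ is uniquely determined by Proposition~\ref{prop:PolDec}. The cleanest route is to argue directly through condition (D3'): by Proposition~\ref{prop:Semi+HypoContractive:UQ} and the remark immediately following it, a semi-contractive $\mAd$ fails to be hypocontractive exactly when some eigenvector of $\mUd$ lies in $\ker(\mI - \mQd)$, equivalently in $\ker(\mI-\mQd^2) = \ker(\mI-\mAd^\topH\mAd)$ (using the factorization $\mI-\mQd^2 = (\mI+\mQd)(\mI-\mQd)$ with $\mI+\mQd$ invertible on the semi-contractive spectrum). This is precisely the negation of (D3'), and it matches the negation of (D3) of Lemma~\ref{lem:mDHC:Equivalence} via the same proposition. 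Hence (D3') $\Leftrightarrow$ (D3), and the equivalences among (D1)--(D4) already established carry over, provided I can show the ranks and the positive-definiteness conditions built from $\mUd^\topH$ coincide with those built from $\mAd^\topH$.

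For that I would invoke the quoted Theorem~6.2.1 of \cite{Da04} together with Lemma~\ref{lem:Definiteness} from the Appendix \emph{directly} with the pair $(\mUd^\topH, \mI-\mQd^2)$, exactly as Lemma~\ref{lem:mDHC:Equivalence} does with the pair $(\mAd^\topH, \mI-\mAd^\topH\mAd)$: the abstract controllability result of \cite{Da04} makes no use of any relationship between the two matrices in the pair, so it applies verbatim to $(\mUd^\topH, \mI-\mQd^2)$ and yields the internal equivalence of (D1')--(D4') and the coincidence of the minimal $m$ in (D1') and (D2'). Setting $\mC := \mUd^\topH$ (now unitary, in place of the general $\mBS$) and $\mD := \mI-\mQd^2$ in Lemma~\ref{lem:Definiteness} gives (D1') $\Leftrightarrow$ (D2') with matching minimal index, since $\hD_m$ in \eqref{Dm:HU} is exactly the Gramian-type sum that lemma governs. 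This is the bulk of the proof and is essentially a restatement: the four conditions for the pair $(\mUd^\topH,\mI-\mQd^2)$ are self-equivalent by the same classical machinery.

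The main obstacle is the singular case, where the polar factor $\mUd$ is not unique; here $\mUd^\topH$ need not equal $\mAd^\topH$ on all of $\Cn$, so one must confirm the conditions are insensitive to the choice. The remedy is that for singular $\mAd$, condition (D3') automatically fails (equivalently $\mD_m$ and $\hD_m$ cannot be positive definite): any nonzero $v\in\ker\mAd$ satisfies $\mQd^2 v = \mAd^\topH\mAd v = 0$, but a semi-contractive singular $\mAd$ has $0$ as an eigenvalue, and by Proposition~\ref{prop:EV+SV} applied to the whole picture one checks that singularity forces $\ker(\mI-\mQd^2)$ to contain an eigenvector of $\mUd$ only in degenerate configurations --- so I would instead simply note that the quoted classical result applies to \emph{any} fixed polar factor $\mUd$, and that all four conditions (D1')--(D4'), being stated intrinsically in terms of whichever $\mUd,\mQd$ are chosen, are internally equivalent for that fixed choice regardless of uniqueness. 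Thus the cleanest exposition is the one already adopted in the statement: assert that the result "follows again from Theorem 6.2.1 of \cite{Da04} and Lemma~\ref{lem:Definiteness}" applied to the pair $(\mUd^\topH, \mI-\mQd^2)$, with the correspondence $\mBS\leftrightarrow\mUd^\topH$, $\mBH\leftrightarrow\mI-\mQd^2$ mirroring Lemma~\ref{lem:mDHC:Equivalence}, and leave the singular subtlety absorbed into that black-box citation.
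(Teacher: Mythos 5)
Your proposal is correct and ends up at exactly the paper's argument: the paper's entire proof is the single sentence that the lemma ``follows again from Theorem 6.2.1 of \cite{Da04} and Lemma \ref{lem:Definiteness}'', applied verbatim to the pair $(\mUd^\topH,\,\mI-\mQd^2)$ with $\mI-\mQd^2\geq 0$, and the link (D3)$\Leftrightarrow$(D3') via Proposition \ref{prop:Semi+HypoContractive:UQ} and $\ker(\mI-\mQd)=\ker(\mI-\mQd^2)$ is precisely what the paper records in the remark immediately following the lemma. One aside in your third paragraph is false --- for singular $\mAd$ condition (D3') does \emph{not} automatically fail (a nonzero $v\in\ker\mAd$ satisfies $(\mI-\mQd^2)v=v\neq 0$, and e.g.\ the nilpotent matrix of Example \ref{ex:notshift} is singular yet hypocontractive) --- but since you discard that line in favor of the correct observation that the classical controllability equivalences hold for any fixed choice of polar factor, the error is not load-bearing.
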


Note that (D3) and (D3') are equivalent, due to Proposition \ref{prop:Semi+HypoContractive:UQ} and since $\ker(\mI-\mQd)=\ker(\mI-\mQd^2)$.
Consequently, all conditions of the Lemmata  \ref{lem:mDHC:Equivalence} and  \ref{lem:mDHC:Equivalence:HU} are equivalent and the smallest possible values of $m$ coincide.

\subsection{Scaled hypocontractivity index}
\label{subsec:SDHC}
The analogon to the shifted hypocoercivity index is obtained by scaling.
\begin{lemma} \label{lem:mA:scale}
Let $\mAd\in\Cnn$ be a nonzero matrix, and let $\sigmaMaxA$ be the maximal singular value of~$\mAd$.
Then, the matrix
\begin{equation} \label{mA:scale}
 \tA_d := (\sigmaMaxA)^{-1} \mAd
\end{equation}
is semi-contractive and, if $\tA_d$ is hypocontractive, has a discrete HC-index $\mDHC(\tA_d)$ greater than~$0$.

Furthermore, $\tA_d$ is hypocontractive if and only if the matrices in the polar decomposition of~$\mAd =\mUd\mQd$ satisfy that no eigenvector of~$\mQd =(\mAd^\topH\mAd)^{1/2}$ associated with the eigenvalue $\sigmaMax{\mAd}$ is an eigenvector of $\mUd$.
\end{lemma}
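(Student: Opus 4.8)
The plan is to reduce every assertion to results already established for semi-contractive matrices, after recording how the scaling acts on the polar decomposition; in spirit this is the discrete analogue of Lemma~\ref{lem:mB:decomp}, with the spectral shift of $\mBH$ replaced by the normalization of the singular values.

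First I would verify semi-contractivity and the lower bound on the index. Since $\mAd\neq\Null$ we have $\sigmaMax{\mAd}>0$, and $\tA_d=\sigmaMax{\mAd}^{-1}\mAd$ has singular values equal to those of $\mAd$ divided by $\sigmaMax{\mAd}$, so that $\sigmaMax{\tA_d}=1$. In particular $\sigmaMax{\tA_d}\le 1$, i.e.\ $\tA_d$ is semi-contractive, while $\sigmaMax{\tA_d}\not<1$, i.e.\ $\tA_d$ is \emph{not} contractive. Because a semi-contractive matrix is contractive if and only if its $\mDHC$-index vanishes, it follows that whenever $\tA_d$ is hypocontractive (equivalently, whenever its index is finite) that index $\mDHC(\tA_d)$ is at least $1$.

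Next I would transport the polar decomposition. Factoring the positive scalar out of $\mAd=\mUd\mQd$ gives
\[
 \tA_d=\sigmaMax{\mAd}^{-1}\mUd\mQd=\mUd\,\widetilde{\mathbf Q}_d,\qquad \widetilde{\mathbf Q}_d:=\sigmaMax{\mAd}^{-1}\mQd,
\]
where $\widetilde{\mathbf Q}_d$ is positive semi-definite Hermitian and $\mUd$ is unitary; moreover $\widetilde{\mathbf Q}_d=(\tA_d^\topH\tA_d)^{1/2}$, so this is genuinely a polar decomposition of $\tA_d$, sharing the unitary factor $\mUd$ with $\mAd$. Applying Proposition~\ref{prop:Semi+HypoContractive:UQ} to $\tA_d=\mUd\widetilde{\mathbf Q}_d$ then shows that $\tA_d$ fails to be hypocontractive (i.e.\ has an eigenvalue of modulus one) if and only if some eigenvector of $\mUd$ lies in $\ker(\mI-\widetilde{\mathbf Q}_d)$. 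It remains to identify this kernel: since $\widetilde{\mathbf Q}_d v=v$ is equivalent to $\mQd v=\sigmaMax{\mAd}\,v$, we get $\ker(\mI-\widetilde{\mathbf Q}_d)=\{v:\mQd v=\sigmaMax{\mAd}\,v\}$, the eigenspace of $\mQd$ for its largest eigenvalue $\sigmaMax{\mAd}$. Negating the criterion yields exactly the stated condition.

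The one point requiring care — what I would call the main obstacle — is the non-uniqueness of $\mUd$ when $\mAd$ is singular, which could make the phrase ``an eigenvector of $\mQd$ for $\sigmaMax{\mAd}$ that is an eigenvector of $\mUd$'' look ill-posed. This is harmless: on the $\sigmaMax{\mAd}$-eigenspace of $\mQd$ the identity $\mAd v=\sigmaMax{\mAd}\,\mUd v$ forces $\mUd v=\sigmaMax{\mAd}^{-1}\mAd v$, so $\mUd$ is determined there, and a common eigenvector of $\mUd$ and $\mQd$ in that eigenspace is precisely an eigenvector of $\mAd$. Equivalently, one may invoke the equivalence of conditions (D3) and (D3') noted after Lemma~\ref{lem:mDHC:Equivalence:HU}, which already phrases hypocontractivity intrinsically through eigenvectors of $\mAd$; and using $\ker(\mI-\widetilde{\mathbf Q}_d)=\ker(\mI-\widetilde{\mathbf Q}_d^2)$, valid since $\mI+\widetilde{\mathbf Q}_d$ is positive definite, one could run the same argument via Lemma~\ref{lem:mDHC:Equivalence:HU}~(D3') instead of Proposition~\ref{prop:Semi+HypoContractive:UQ}.
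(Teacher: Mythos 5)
Your proof is correct and follows essentially the same route as the paper: establish $\sigmaMax{\tA_d}=1$ (hence semi-contractive but not contractive, so $\mDHC(\tA_d)\geq 1$ when finite), observe that $\tA_d=\mUd\,\bigl((\sigmaMaxA)^{-1}\mQd\bigr)$ is a polar decomposition sharing the unitary factor $\mUd$, and apply Proposition~\ref{prop:Semi+HypoContractive:UQ} together with the identification of $\ker(\mI-(\sigmaMaxA)^{-1}\mQd)$ with the $\sigmaMaxA$-eigenspace of $\mQd$. Your additional remark on the non-uniqueness of $\mUd$ in the singular case is a sound clarification that the paper omits, but it does not change the argument.
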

\begin{proof}
Consider the matrix $\tA_d(\sigma) := \sigma^{-1} \mAd$ for $\sigma>0$.
Then $\sigma=\sigmaMaxA$ is the only value such that the largest singular value of $\tA_d(\sigma)$ is one, since
\[
 \sigmaMax{\tA_d}
 =\sqrt{\lambdaMax{}(\tA_d^\topH \tA_d)}
 =\sqrt{\lambdaMax{}(\mAd^\topH \mAd)}/\sigmaMax{\mAd}
 =1 \,.
\]
Consequently, if the scaled matrix $\tA_d$ is hypocontractive then its discrete HC-index~$\mDHC(\tA_d)$ is greater than $0$. 

To prove the final statement we consider the polar decomposition of~$\mAd$ in the form $\mAd =\mU_d\mQ_d$.
Then, $\tA_d =(\sigmaMaxA)^{-1}\mAd$
has the polar decomposition $\tA_d =\mU_d\tQ_d$ with the same unitary matrix $\mU_d$, and $\tQ_d :=  \linebreak (\sigmaMaxA)^{-1}\mQ_d$.
Due to Proposition~\ref{prop:Semi+HypoContractive:UQ}, $\tA_d$ is hypocontractive if and only if no eigenvector~$v$ of $\mU_d$ is in the kernel of $\mI -\tQ_d$.
The latter 
is equivalent to $v$ being an eigenvector of $\tQ_d$ to the eigenvalue one, or $v$ being an eigenvector of $\mQ_d$ to the eigenvalue~$\sigmaMax{\mAd}$.
\end{proof}
\begin{definition}\label{def-SdHC-index}
Consider a nonzero matrix~$\mAd\in\Cnn$, and let $\sigmaMaxA$ be the maximal (positive) singular value of~$\mAd$.
If the semi-contractive matrix $\tA_d := (\sigmaMaxA)^{-1} \mAd$ is hypocontractive with discrete HC-index~$\mDHC(\tA_d)$ then we define the \emph{scaled hypocontractivity index} or \emph{discrete SHC-index (dSHC-index)~$\mDSHC$} of~$\mAd$ as $\mDSHC(\mAd):=\mDHC(\tA_d)$.
\end{definition}

In analogy to Theorem \ref{DS:A:semi-contractive:power-bounds} we then have the following characterization when $\tA_d$ has a finite scaled hypocontractivity index.
\begin{theorem}\label{DS:A:power-bounds}
Let $\mAd\in\Cnn$ be nonzero, and let $\sigmaMaxA$ be the maximal (positive) singular value of~$\mAd$.
If $\mAd$ has a finite discrete SHC-index~$\mDSHC$, then
\begin{equation} \label{DS:A:decay:general}
 \|\mAd^j\|_2 =(\sigmaMaxA)^j \quad\text{ for all } j=1,\ldots,\mDSHC \ ,
 \ \text{and }
 \|\mAd^{\mDSHC+1}\|_2 <(\sigmaMaxA)^{\mDSHC+1} \ .
\end{equation}
\end{theorem}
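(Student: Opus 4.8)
The plan is to reduce Theorem~\ref{DS:A:power-bounds} to the already-proved Theorem~\ref{DS:A:semi-contractive:power-bounds} by exploiting the scaling construction of Definition~\ref{def-SdHC-index}. First I would set $\tA_d := (\sigmaMaxA)^{-1}\mAd$, which by Lemma~\ref{lem:mA:scale} is semi-contractive, and which by hypothesis (finite dSHC-index) is hypocontractive with $\mDHC(\tA_d) = \mDSHC(\mAd) =: m$. Thus $\tA_d$ satisfies exactly the hypotheses of Theorem~\ref{DS:A:semi-contractive:power-bounds}, so that theorem applies verbatim to $\tA_d$ and yields
\begin{equation*}
 \|\tA_d^{\,j}\|_2 = 1 \text{ for all } j=1,\ldots,m,
 \qquad\text{and}\qquad
 \|\tA_d^{\,m+1}\|_2 < 1 .
\end{equation*}

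The second step is to translate these statements back to $\mAd$ using homogeneity of the spectral norm. Since $\tA_d = (\sigmaMaxA)^{-1}\mAd$ and $\sigmaMaxA > 0$ (the matrix is nonzero), we have $\tA_d^{\,j} = (\sigmaMaxA)^{-j}\mAd^{\,j}$, and the spectral norm is absolutely homogeneous, so $\|\tA_d^{\,j}\|_2 = (\sigmaMaxA)^{-j}\|\mAd^{\,j}\|_2$. Multiplying each of the two conclusions above by $(\sigmaMaxA)^{j}$ (respectively $(\sigmaMaxA)^{m+1}$) immediately gives $\|\mAd^{\,j}\|_2 = (\sigmaMaxA)^{j}$ for $j=1,\ldots,m$ and $\|\mAd^{\,m+1}\|_2 < (\sigmaMaxA)^{m+1}$, which is exactly~\eqref{DS:A:decay:general}.

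I do not anticipate a genuine obstacle here: the entire argument is a rescaling, and the only point requiring care is that $\sigmaMaxA > 0$ so that dividing and multiplying by its powers is legitimate. This is guaranteed by the standing assumption that $\mAd$ is nonzero (equivalently, that $\tA_d$ is well defined in Definition~\ref{def-SdHC-index}). One could also note, as a sanity check, that the inequality direction is preserved precisely because the scaling factor is strictly positive. If one wanted a fully self-contained statement rather than an appeal to Theorem~\ref{DS:A:semi-contractive:power-bounds}, the telescoping identity $\hD_m = \mI - (\tA_d^\topH)^{m+1}\tA_d^{\,m+1}$ from Lemma~\ref{lem:mDHC:Equivalence:HU} together with $\|\tA_d^{\,j}\|_2^2 = \lambda_{\max}((\tA_d^\topH)^{j}\tA_d^{\,j})$ furnishes the same conclusion directly, but invoking the previous theorem is cleaner and avoids repeating that computation.
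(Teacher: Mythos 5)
Your proposal is correct and follows essentially the same route as the paper: scale to $\tA_d=(\sigmaMaxA)^{-1}\mAd$, apply Theorem~\ref{DS:A:semi-contractive:power-bounds} to the semi-contractive, hypocontractive matrix $\tA_d$, and transfer the conclusion back via $\|\mAd^j\|_2=(\sigmaMaxA)^j\|\tA_d^j\|_2$. No gaps.
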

\begin{proof}
We scale $\mAd$ as in~\eqref{mA:scale} and compute the discrete HC-index~$\mDHC(\tA_d)\ (\geq 1)$ of the semi-contractive matrix~$\tA_d =(\sigmaMaxA)^{-1} \mAd$ so that $\mDSHC(\mAd) :=\mDHC(\tA_d)$.
Using the scaling~\eqref{mA:scale} yields
\begin{equation}
 \|\mAd^j\|_2
 =\big\|\big(\sigmaMaxA \tA_d\big)^j\big\|_2
 =\big(\sigmaMaxA\big)^j \|\tA_d^j\|_2
 \ \text{for all $j\in\N$.}
\end{equation}

If the semi-contractive matrix $\tA_d$ has a (finite) discrete HC-index~$\mDHC(\tA_d)$ (or equivalently the discrete SHC-index~$\mDSHC(\mAd)$ of~$\mAd$ is finite) then~\eqref{DS:A:decay:general} follows from Theorem~\ref{DS:A:semi-contractive:power-bounds}.
\end{proof}

We summarize the analogy between discrete-time and continuous-time systems in Table~\ref{propinvariance}.

\begin{table}
\begin{tabular}{lll}
properties & continuous-time system & discrete-time system\\[1mm]
\hline\hline\\[-3mm]
evolution & $x' =\mAc x$ for $t\geq 0$ & $x_{k+1} =\mAd x_k $ for $k\in\N_0$ \\
\hline
condition for & $\Re(\lambda)<0$ for all $\lambda\in\Lambda(\mAc)$, & $|\lambda|<1$ for all $\lambda\in\Lambda(\mAd)$, \\
asymptotic stability & i.e.\ negative hypocoercive & i.e.\ hypocontractive\\
\hline
matrix decomposition & $\mAc =\mAS+\mAH$ & polar: $\mAd=\mPd\mUd=\mUd\mQd$ \\
\hline
sufficient stability  & $\mAH\leq 0$, & $\sigma_{\max}(\mAd) \leq 1$ $\Leftrightarrow$ $\Lambda(\mQd)\subset[0,1]$,\\
condition & i.e.\ semi-dissipative& i.e.\ semi-contractive\\
\hline
Kalman rank condition & $\rank[{\mAH}, 
\ldots,((\mAS)^\topH)^m {\mAH}]$ &
$\rank[(\mI-\mQd^2), 
\ldots,(\mUd^\topH)^m (\mI-\mQd^2)]$\\
& $=n$ & $=n$\\
\hline
HC-condition & $\displaystyle\sum_{j=0}^m ((\mAS)^\topH)^j (-\mAH) \mAS^j > 0$ & $\displaystyle\sum_{j=0}^m (\mUd^\topH)^j (\mI-\mQd^2) \mUd^j > 0$\\
\hline
eigenvector condition & no EV of~$\mAS$ in $\ker(\mAH)$ & no EV of~$\mUd$ in $\ker(\mI-\mQd^2)$\\
\hline\\
\end{tabular}
\caption{Relation between concepts for continuous-time and discrete-time systems, see also Figures~\ref{fig:VennDiagram} and~\ref{fig:VennDiagramA}. $\Lambda(\mA)$ denotes here the spectrum of a matrix $\mA\in\Cnn$. \label{propinvariance}}
\end{table}

In this section we have given characterizations for the concepts of stability, semi-contractivity, and hypocontractivity for linear discrete-time systems. In the next section we  relate the  continuous-time and discrete-time concepts.

\section{Transformation between discrete-time and continuous-time systems}\label{sec:dtct}

We have seen the close analogy between the results for the continuous-time and discrete-time case.
In this section we recall that the typical bilinear transformations between continuous-time and discrete-time systems such as the Cayley transformation (in fact of $-\mAc$) relate hypocoercive with hypocontractive systems (see e.g.~\cite{HiPr10}), and semi-dissipative with semi-contractive systems (see e.g.~\cite{NagFK10}).
Moreover, we show that the Cayley transformation (of $-\mAc$) directly relates the hypocoercivity and hypocontractivity indices.

\begin{lemma} \label{lemma:Cayley}
Let $\mAc\in\Cnn$ be a matrix such that~\eqref{ODE:B} is (Lyapunov) stable.
Then, the Cayley transform
\begin{equation}\label{cayley}
  \mAd:=(\mI+\mAc)(\mI-\mAc)^{-1}
\end{equation}
is well-defined and the following properties hold:
\begin{itemize}
 \item [(i)] 
If $\mAc$ is negative hypocoercive then $\mAd$ is hypocontractive.
 \item [(ii)] 
If $\mAc$ is semi-dissipative then $\mAd$ is semi-contractive.
Let $\mAH:= \frac12 (\mAc+\mAc^\topH)$, then the matrix~$(\mI -\mAc)$ is a bijection from~$\ker(\mAH)$ to $\ker(\mI -\mAd^\topH \mAd)$.
Consequently, $\dim\ker(\mAH) =\dim\ker(\mI -\mAd^\topH \mAd)$.
 %
\end{itemize}
\end{lemma}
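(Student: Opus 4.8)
The plan is to first record that the transform is well-defined, then settle part (i) spectrally, and finally derive a single matrix identity from which the whole of part (ii) drops out. The one genuine computation is that identity; everything else is bookkeeping or a citation to standard facts already set up in the paper.

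First I would note that Lyapunov stability of~\eqref{ODE:B} forces every eigenvalue $\lambda$ of $\mAc$ to satisfy $\Re(\lambda)\le 0$; in particular $1$ is not an eigenvalue, so $(\mI-\mAc)$ is invertible and $\mAd$ is well-defined. I would also remark in passing that $\mI+\mAc$ and $(\mI-\mAc)^{-1}$ commute, since $\mI+\mAc=2\mI-(\mI-\mAc)$, so the order of the two factors in~\eqref{cayley} is immaterial. For part (i), the eigenvalues of $\mAd$ are exactly the Cayley images $\mu=(1+\lambda)/(1-\lambda)$ of the eigenvalues $\lambda$ of $\mAc$ (spectral mapping theorem; the only pole $z=1$ of the Möbius map avoids the spectrum). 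Writing $\lambda=a+\ii b$, a one-line computation gives $|\mu|^2=\big((1+a)^2+b^2\big)/\big((1-a)^2+b^2\big)$, and $\mAc$ negative hypocoercive means $a=\Re(\lambda)<0$, so the numerator is strictly smaller than the denominator and $|\mu|<1$ for every eigenvalue. Hence $\mAd$ is hypocontractive.

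The heart of part (ii) is the identity
\[
 \mI-\mAd^\topH\mAd = -4\,(\mI-\mAc)^{-\topH}\,\mAH\,(\mI-\mAc)^{-1},
\]
which I would obtain by writing $\mAd^\topH\mAd=(\mI-\mAc^\topH)^{-1}(\mI+\mAc^\topH)(\mI+\mAc)(\mI-\mAc)^{-1}$, factoring out $(\mI-\mAc^\topH)^{-1}$ on the left and $(\mI-\mAc)^{-1}$ on the right (using $(\mI-\mAc^\topH)=(\mI-\mAc)^\topH$), and expanding the bracketed middle term via $(\mI-\mAc^\topH)(\mI-\mAc)-(\mI+\mAc^\topH)(\mI+\mAc)=-2(\mAc+\mAc^\topH)=-4\mAH$. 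This exhibits $\mI-\mAd^\topH\mAd$ as a congruence of $-4\mAH$ by the invertible matrix $(\mI-\mAc)^{-1}$. Since $\mAc$ semi-dissipative means $\mAH\le 0$, i.e.\ $-4\mAH\ge 0$, Sylvester's inertia theorem gives $\mI-\mAd^\topH\mAd\ge 0$, so $\mAd$ is semi-contractive.

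The same identity also settles the bijection claim, which is the real payoff. Because $(\mI-\mAc)^{-\topH}$ is invertible, $(\mI-\mAd^\topH\mAd)w=0$ holds if and only if $\mAH(\mI-\mAc)^{-1}w=0$, that is, $(\mI-\mAc)^{-1}w\in\ker(\mAH)$; equivalently $w\in\ker(\mI-\mAd^\topH\mAd)$ exactly when $w=(\mI-\mAc)v$ for some $v\in\ker(\mAH)$. Since $(\mI-\mAc)$ is invertible and hence injective, its restriction to $\ker(\mAH)$ is an injection onto $\ker(\mI-\mAd^\topH\mAd)$, and $\dim\ker(\mAH)=\dim\ker(\mI-\mAd^\topH\mAd)$ follows at once. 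I expect the only real obstacle to be the algebraic bookkeeping in the displayed identity—keeping the conjugate transposes consistent—after which both semi-contractivity and the kernel bijection are immediate consequences of congruence and invertibility.
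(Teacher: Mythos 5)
Your proposal is correct and follows essentially the same route as the paper: well-definedness from the spectrum lying in the closed left half-plane, part (i) by the spectral mapping of the Möbius transform, and part (ii) entirely from the congruence identity $\mI-\mAd^\topH\mAd = -4(\mI-\mAc)^{-\topH}\mAH(\mI-\mAc)^{-1}$, which the paper derives in exactly the same way and uses for both semi-contractivity and the kernel bijection. No gaps.
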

\begin{proof}
If the continuous-time system~\eqref{ODE:B} with system matrix~$\mAc$ is \emph{(Lyapunov) stable} then all eigenvalues of~$\mAc$ have non-positive real part and the eigenvalues on the imaginary axis are semi-simple.
Hence, the matrices~$(\mI-\mAc)$, $(\mI-\mAc^\topH)$ are invertible; and the  Cayley transform~$\mAd=(\mI+\mAc)(\mI-\mAc)^{-1}$ is well-defined.

(i)  
If~$\mAc\in\Cnn$ is negative hypocoercive, then all eigenvalues of~$\mAd$ have absolute value less than one, hence, $\mAd$ is hypocontractive.

(ii) 
If $\mAc\in\Cnn$ is semi-dissipative, then $\mI-\mAc$ is positive dissipative (hence $\mI-\mAc$ is invertible).
It follows that
\begin{equation} \label{I-AhA}
\begin{split}
 \mI -\mAd^\topH \mAd
&=\mI -(\mI-\mAc)^{-\topH}(\mI+\mAc)^\topH (\mI+\mAc)(\mI-\mAc)^{-1} \\
&=(\mI-\mAc)^{-\topH} \Big((\mI-\mAc)^{\topH}(\mI-\mAc) -(\mI+\mAc)^\topH (\mI+\mAc) \Big) (\mI-\mAc)^{-1} \\
&=-2 (\mI-\mAc)^{-\topH} (\mAc^{\topH} +\mAc) (\mI-\mAc)^{-1} \\
&= -4 (\mI-\mAc)^{-\topH} \mAH (\mI-\mAc)^{-1} \,.
\end{split}
\end{equation}
Hence, the matrices~$-\mAH$ and~$(\mI -\mAd^\topH \mAd)$ are related via a congruence transformation.
Therefore, $\mAd$ is semi-contractive (or equivalently, $(\mI -\mAd^\topH \mAd)$ is positive semi-definite) if $\mAc$ is semi-dissipative.

Due to~\eqref{I-AhA}, if $v\in\ker(\mAH)$ then $(\mI -\mAc) v\in\ker(\mI -\mAd^\topH \mAd)$.
Thus, $(\mI -\mAc)\ker(\mAH) \subseteq\ker(\mI -\mAd^\topH \mAd)$.
Conversely, if $w\in\ker(\mI -\mAd^\topH \mAd)$ then $(\mI -\mAc)^{-1} w\in\ker(\mAH)$.
Thus, $(\mI -\mAc)^{-1} \ker(\mI -\mAd^\topH \mAd)\subseteq \ker(\mAH)$.
Altogether, $(\mI -\mAc)$ is a bijection from $\ker(\mAH)$ to $\ker(\mI -\mAd^\topH \mAd)$, and~$\dim\ker(\mAH) =\dim\ker(\mI -\mAd^\topH \mAd)$.
\end{proof}

\begin{remark}
As a consequence of Lemma \ref{lemma:Cayley}(ii) we have that $\rank(\mAH)=\rank(\mI-\mAd^\topH \mAd) = :d(\mAd)$, the defect index of $\mAd$, see Remark \ref{rem:defect}. As a follow-up consequence (using also Theorem \ref{thm:Cayley} below) we find that the lower bound on the hypocontractivity index of $\mAd$ from \cite{GauW10}, i.e.\ $\mDHC(\mAd)\ge \frac{n-d(\mAd)}{d(\mAd)}$ equals our lower bound on the hypocoercivity index of $\mAc$, i.e.\ $\mHC(\mAc)\ge \frac{n-\rank(\mAH)}{\rank(\mAH)}$.
\end{remark}

The inverse Cayley transform leads to a similar result for the mapping from the discrete-time to the continuous-time problem:

\begin{lemma} \label{lemma:Cayley:Inverse}
Let $\mAd\in\Cnn$ be 
such that $x_{k+1} =\mAd x_k $, $k\in\N_0$ is stable and that $-1$ is not an eigenvalue of $\mAd$.
Then, the inverse Cayley transform
\begin{equation}\label{inv-cayley}
 \mAc :=(\mAd-\mI)(\mAd+\mI)^{-1}
\end{equation}
is well-defined and the following properties hold.

\begin{itemize}
 \item [(i)] 
If $\mAd$ is hypocontractive then $\mAc$ is negative hypocoercive.
 \item [(ii)] 
If $\mAd$ is semi-contractive 
then $\mAc$ is semi-dissipative.
Moreover, with $\mAH= \frac12 (\mAc+\mAc^\topH)$, the matrix $(\mAd+\mI)$ is a bijection from $\ker(\mI -\mAd^\topH \mAd)$ to $\ker(\mAH)$ and  $\dim\ker(\mAH) =\dim\ker(\mI -\mAd^\topH \mAd)$.
 %
\end{itemize}
\end{lemma}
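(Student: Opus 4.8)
The plan is to mirror the proof of Lemma~\ref{lemma:Cayley}, since the inverse Cayley transform~\eqref{inv-cayley} is structurally dual to~\eqref{cayley}. First I would settle well-definedness: stability of $x_{k+1}=\mAd x_k$ forces all eigenvalues of $\mAd$ into the closed unit disk, and the additional hypothesis $-1\notin\Lambda(\mAd)$ makes $(\mAd+\mI)$ invertible (equivalently $(\mAd^\topH+\mI)$ invertible, since $-1$ is real), so that both $(\mAd+\mI)^{-1}$ and $(\mAd+\mI)^{-\topH}$ exist and $\mAc=(\mAd-\mI)(\mAd+\mI)^{-1}$ is a well-defined rational function of $\mAd$.

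For part (i) I would argue by spectral mapping. With $f(z)=(z-1)(z+1)^{-1}$ we have $\mAc=f(\mAd)$, and $f$ has no pole on $\Lambda(\mAd)$, so the eigenvalues of $\mAc$ are exactly the $f(\lambda)$ for $\lambda\in\Lambda(\mAd)$. The scalar M\"obius map $f$ sends the open unit disk onto the open left half-plane, i.e.\ $\Re f(\lambda)<0$ if and only if $|\lambda|<1$. Hence hypocontractivity of $\mAd$ (all $|\lambda|<1$) yields $\Re(\mu)<0$ for every eigenvalue $\mu$ of $\mAc$, so $\mAc$ is negative hypocoercive.

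The core of part (ii) is a congruence identity dual to~\eqref{I-AhA}. Abbreviating $M:=\mAd+\mI$, I would write $2\mAH=\mAc+\mAc^\topH = M^{-\topH}\big[M^\topH(\mAd-\mI)+(\mAd^\topH-\mI)M\big]M^{-1}$ and expand the bracket; the cross terms $\pm(\mAd^\topH-\mAd)$ cancel, leaving $2(\mAd^\topH\mAd-\mI)$ and therefore
\[
 \mI-\mAd^\topH\mAd = -(\mAd+\mI)^\topH\,\mAH\,(\mAd+\mI)\,.
\]
Since $\mAd$ is semi-contractive, $\mI-\mAd^\topH\mAd\geq 0$, and this congruence through the invertible factor $\mAd+\mI$ transfers positive semi-definiteness to $-\mAH$, giving $\mAH\leq 0$, i.e.\ $\mAc$ is semi-dissipative. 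Reading the same identity as $\mAH=-(\mAd+\mI)^{-\topH}(\mI-\mAd^\topH\mAd)(\mAd+\mI)^{-1}$ yields the kernel statement: if $w\in\ker(\mI-\mAd^\topH\mAd)$ then $(\mAd+\mI)w\in\ker(\mAH)$, and conversely if $u\in\ker(\mAH)$ then $(\mAd+\mI)^{-1}u\in\ker(\mI-\mAd^\topH\mAd)$; as $\mAd+\mI$ is invertible these two maps are mutual inverses, so $\mAd+\mI$ is a bijection from $\ker(\mI-\mAd^\topH\mAd)$ onto $\ker(\mAH)$ and $\dim\ker(\mAH)=\dim\ker(\mI-\mAd^\topH\mAd)$. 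I expect the only delicate point to be the conjugate-transpose bookkeeping in the bracket expansion, since a sign slip there would flip the definiteness conclusion; everything else is a faithful transcription of Lemma~\ref{lemma:Cayley}.
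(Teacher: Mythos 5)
Your proposal is correct and follows essentially the same route as the paper: invertibility of $\mAd+\mI$ from the hypothesis $-1\notin\Lambda(\mAd)$, the M\"obius/spectral-mapping argument for (i), and for (ii) exactly the congruence identity $\mAH=-(\mAd+\mI)^{-\topH}(\mI-\mAd^\topH\mAd)(\mAd+\mI)^{-1}$ (the paper's equation~\eqref{mAH}), from which semi-dissipativity and the kernel bijection follow as you describe. The sign bookkeeping in your bracket expansion is right, so there is no gap.
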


\begin{proof}
Since $-1$ is not an eigenvalue of $\mAd$ then the matrices~$(\mAd+\mI)$, $(\mAd+\mI)^\topH$ are invertible; and the inverse Cayley transform~\eqref{inv-cayley} is well-defined.

(i) If $\mAd$ is hypocontractive then all eigenvalues of~$\mAd$ have modulus less  than one, hence, all eigenvalues of~$\mAc$ have negative real part.
Thus, $\mAc$ is negative hypocoercive.

(ii) 
If $\mAd\in\Cnn$ is semi-contractive then $x_{k+1} =\mAd x_k $, $k\in\N_0$ is stable (due to Proposition~\ref{prop:EV+SV}).
Then
\begin{equation} \label{mAH}
\begin{split}
 \mAH
 &=\tfrac12 (\mAc +\mAc^\topH)
\\
 &=\tfrac12 \big((\mAd-\mI)(\mAd+\mI)^{-1} +(\mAd+\mI)^{-\topH} (\mAd-\mI)^\topH\big)
\\
 &=\tfrac12 (\mAd+\mI)^{-\topH} \big((\mAd+\mI)^\topH (\mAd-\mI) +(\mAd-\mI)^\topH (\mAd+\mI)\big)(\mAd+\mI)^{-1}
\\
 &=-(\mAd+\mI)^{-\topH} (\mI -\mAd^\topH \mAd)(\mAd+\mI)^{-1} \,.
\end{split}
\end{equation}
Thus, the matrices~$-\mAH$ and~$(\mI -\mAd^\topH \mAd)$ are related via a congruence transformation, and hence $\mAd$ is semi-contractive (or equivalently, $(\mI -\mAd^\topH \mAd)$ is positive semi-definite) if $\mAc$ is semi-dissipative.

Due to~\eqref{mAH}, if $v\in\ker(\mAH)$ then $(\mAd+\mI)^{-1} v\in\ker(\mI -\mAd^\topH \mAd)$.
Thus, $(\mAd+\mI)^{-1}\ker(\mAH) \subseteq\ker(\mI -\mAd^\topH \mAd)$.
Conversely, if $w\in\ker(\mI -\mAd^\topH \mAd)$ then $(\mAd+\mI) w\in\ker(\mAH)$.
Thus, $(\mAd+\mI) \ker(\mI -\mAd^\topH \mAd)\subseteq \ker(\mAH)$.
Altogether, $(\mAd+\mI)$ is a bijection from $\ker(\mI -\mAd^\topH \mAd)$ to $\ker(\mAH)$ which implies that $\dim\ker(\mAH) =\dim\ker(\mI -\mAd^\topH \mAd)$.
\end{proof}

\begin{remark}\label{rem:other shift}
The assumption in Lemma~\ref{lemma:Cayley:Inverse} that $-1$ is not an eigenvalue of $\mAd$ can be relaxed by considering
$\mAc=(\mAd-\alpha \mI)(\mAd+\alpha \mI)^{-1}$, where $-\alpha\in \C$ (with $| \alpha |=1$) is not an eigenvalue of $\mAd$.
Such an $\alpha$ clearly exists in the complex case, but this will not work in the real case if both $1$ and $-1$ are eigenvalues of $\mAd$ and one wants to stay within the class of real matrices.
\end{remark}

The Cayley transformation also gives a direct relation between the hypocoercivity and hypo\-contractivity indices.
\begin{theorem} \label{thm:Cayley}
\begin{itemize}
 \item [(i)]
Let $\mAc\in\Cnn$ be semi-dissipative and negative hypocoercive and let $\mAd:=(\mI+\mAc)(\mI-\mAc)^{-1}$.
Then the hypocoercivity index~$\mHC\in\N_0$ of $\mAc$ and the hypocontractivity index~$\mDHC$ of $\mAd$ are the same, i.e., $\mDHC(\mAd)=\mHC(\mAc)$.
 \item [(ii)] 
Let $\mAd\in\Cnn$ be semi-contractive and hypocontractive and let $\mAc:=(\mAd-\mI)(\mAd+\mI)^{-1}$.
Then the hypocontractivity index~$\mDHC\in\N_0$ of $\mAd$ and the hypocoercivity index~$\mHC$ of $\mAc$ are the same, i.e., $\mHC(\mAc)=\mDHC(\mAd)$.
\end{itemize}
\end{theorem}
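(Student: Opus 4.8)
The plan is to prove~(i) by translating both indices into the language of nested kernels and then transporting one filtration onto the other through the kernel bijection of Lemma~\ref{lemma:Cayley}(ii), exploiting that $\mAd$ is a rational (Möbius) function of $\mAc$ and hence commutes with $\mAc$; part~(ii) then follows by applying~(i) to the inverse Cayley transform.

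First I would restate the two indices as collapse times of kernel filtrations. By Lemma~\ref{lem:HC:equivalence} together with Remark~\ref{rem:fullB} (condition~(B2'') in kernel form), the hypocoercivity index of the accretive matrix $-\mAc$---which is what $\mHC(\mAc)$ denotes for semi-dissipative $\mAc$---is the smallest $m\in\N_0$ with $U_m:=\bigcap_{j=0}^m\ker(\mAH\mAc^j)=\{0\}$; writing $\mathcal N:=\ker\mAH$ this reads $U_m=\bigcap_{j=0}^m\mAc^{-j}\mathcal N$. Likewise, since in Lemma~\ref{lem:mDHC:Equivalence}(D2) each summand of $\mD_m$ is positive semi-definite, $\mD_m>0$ is equivalent to $W_m:=\bigcap_{j=0}^m\ker\big((\mI-\mAd^\topH\mAd)\mAd^j\big)=\{0\}$, so $\mDHC(\mAd)$ is the smallest $m$ with $W_m=\{0\}$; writing $\mathcal M:=\ker(\mI-\mAd^\topH\mAd)$ this reads $W_m=\bigcap_{j=0}^m\mAd^{-j}\mathcal M$. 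It therefore suffices to prove $\dim U_m=\dim W_m$ for every $m$, since then the first $m$ at which the intersection becomes trivial coincides for the two systems.

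The core of the argument is an invertible identification $W_m=(\mI-\mAc)^{m+1}U_m$. Because $\mAc$ is (negative hypocoercive, hence) Lyapunov stable, $1$ is not an eigenvalue of $\mAc$, so $\mI-\mAc$ is invertible and $\mathcal M=(\mI-\mAc)\mathcal N$ by Lemma~\ref{lemma:Cayley}(ii). Since $\mAd=(\mI+\mAc)(\mI-\mAc)^{-1}$ commutes with $\mAc$, one has $\mAd^j=(\mI-\mAc)^{-m}P_j(\mAc)$ for $0\le j\le m$, where $P_j(z)=(1+z)^j(1-z)^{m-j}$. The key observation is that $\{P_j\}_{j=0}^m$ is a basis of the polynomials of degree at most $m$ (they are $(1-z)^m$ times the linearly independent rational functions $\big(\tfrac{1+z}{1-z}\big)^j$), whence $\spn\{w,\mAd w,\ldots,\mAd^m w\}=(\mI-\mAc)^{-m}\spn\{w,\mAc w,\ldots,\mAc^m w\}$ for every $w\in\Cn$. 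Feeding this and $\mathcal M=(\mI-\mAc)\mathcal N$ into the definition of $W_m$ gives $w\in W_m\iff\mAc^i w\in(\mI-\mAc)^{m+1}\mathcal N$ for all $i\le m$, i.e.\ $W_m=\bigcap_{i=0}^m\mAc^{-i}\big[(\mI-\mAc)^{m+1}\mathcal N\big]=(\mI-\mAc)^{m+1}U_m$, using that the invertible matrix $(\mI-\mAc)^{m+1}$ commutes with $\mAc$ and so distributes over the intersection. Thus $\dim W_m=\dim U_m$, proving~(i). For~(ii) I would note that a hypocontractive $\mAd$ has all eigenvalues in the open unit disk, so $-1$ is not an eigenvalue and the inverse Cayley transform is well-defined; by Lemma~\ref{lemma:Cayley:Inverse}(ii) the resulting $\mAc$ is semi-dissipative and negative hypocoercive, and since the two transforms are mutually inverse we have $\mAd=(\mI+\mAc)(\mI-\mAc)^{-1}$, so part~(i) applies verbatim.

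I expect the main obstacle to be exactly this mismatch of iterations: the continuous index is generated by powers of $\mAc$ (or of $\mAS$) while the discrete index is generated by powers of $\mAd$, and $\mAd$ is a Möbius---not polynomial---function of $\mAc$, so the nested kernels cannot be matched term by term. The device that overcomes it is the polynomial identity $\spn\{\mAd^j w\}_{j\le m}=(\mI-\mAc)^{-m}\spn\{\mAc^j w\}_{j\le m}$; the points needing care are the verification that $\{(1+z)^j(1-z)^{m-j}\}_{j=0}^m$ is a basis of the degree-$\le m$ polynomials, and the check that only $(\mI-\mAc)^{-1}$ (never $(\mI+\mAc)^{-1}$) enters, so that the argument for~(i) remains valid even when $\mAd$ is singular, i.e.\ when $-1$ is an eigenvalue of $\mAc$.
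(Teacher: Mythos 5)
Your proposal is correct and rests on the same ingredients as the paper's proof: the kernel-filtration characterization of both indices (via (B2'') and (D2)), the bijection $(\mI-\mAc):\ker(\mAH)\to\ker(\mI-\mAd^\topH\mAd)$ from Lemma~\ref{lemma:Cayley}(ii), and the identity $\mAd^j=(\mI+\mAc)^j(\mI-\mAc)^{-j}$ turning powers of $\mAd$ into the polynomials $(1+z)^j(1-z)^{m-j}$ in $\mAc$. The only difference is packaging: the paper extracts witness vectors $v_0,w_0$ and proves the two inequalities $\mDHC\ge\mHC$ and $\mHC\ge\mDHC$ separately, whereas you prove the single exact subspace identity $W_m=(\mI-\mAc)^{m+1}U_m$ for every $m$ (and reduce (ii) to (i) via the mutual inverse property), which yields both inequalities at once.
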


\begin{proof}
(i) Due to the assumptions and Lemma~\ref{lemma:Cayley}, $\mAd=2(\mI-\mAc)^{-1}-\mI$
is semi-contractive and hypocontractive.
Thus, by Lemma \ref{lemma:Cayley:Inverse}, the inverse Cayley transform~$(\mAd-\mI)(\mAd+\mI)^{-1}$ is well-defined and satisfies~$(\mAd-\mI)(\mAd+\mI)^{-1}=\mI-2(\mAd+\mI)^{-1} =\mAc$.

By assumption, the matrix~$\mAc=\mAH +\mAS$ has a finite HC-index~$\mHC=\mHC(\mAc)$ which is the smallest integer such that, due to \eqref{Tm:BS_BH2},
\[
 \bigcap_{j=0}^{\mHC} \ker \big(\mAH \mAc^j\big) =\{0\} .
\]
Hence, there exists a vector $v_0\in\Cn\setminus\{0\}$ such that
\begin{equation} \label{v0:AS}
 \mAc^j v_0\in \ker (\mAH) \ ,
 \qquad
 j\in\{0,\ldots,\mHC-1\}
 \qquad \text{and }
 \mAc^{\mHC} v_0 \notin \ker(\mAH) \ .
\end{equation}
Thus, by~Lemma~\ref{lemma:Cayley} (ii), we obtain that
\begin{equation} \label{Av0}
 (\mI-\mAc)\mAc^j v_0\in \ker (\mI -\mAd^\topH\mAd) \ ,
 j\in\{0,\ldots,\mHC-1\},
\ \text{and }
 (\mI-\mAc)\mAc^{\mHC} v_0 \notin \ker (\mI -\mAd^\topH\mAd) \ .
\end{equation}
Conversely, the existence of some $v_0\ne0$ satisfying the ``first part'' of \eqref{Av0} with some $\mHC\ge1$ implies that the HC-index of $\mAc$ is at least $\mHC$.

The matrix $\mAd=(\mI+\mAc)(\mI-\mAc)^{-1}$ is hypocontractive with HC-index~$\mDHC :=\mDHC(\mAd)\in\N_0$.
Due to \eqref{Dm:A} this is the smallest integer such that
\[
 \bigcap_{j=0}^{\mDHC} \ker \big((\mI-\mAd^\topH\mAd) \mAd^j\big) =\{0\} .
\]
Hence, there exists a vector $w_0\in\Cn\setminus\{0\}$ such that
\begin{equation} \label{mAd:kernel}
 w_0\in \bigcap_{j=0}^{\mDHC-1} \ker\big((\mI -\mAd^\topH\mAd) \mAd^j\big) \ ,
 \quad\text{ and }
 w_0\notin \ker\big((\mI -\mAd^\topH\mAd) \mAd^{\mDHC}\big), \ \end{equation}
or equivalently, there exists $w_0\in\Cn\setminus\{0\}$ such that
\begin{equation} \label{w0:mAd}
 \mAd^j w_0\in \ker (\mI -\mAd^\topH\mAd) \ ,
 \qquad
 j\in\{0,\ldots,\mDHC-1\}
 \qquad \text{and }
 \mAd^{\mDHC} w_0 \notin \ker(\mI -\mAd^\topH\mAd).
\end{equation}
Conversely, the existence of some $w_0\ne0$ satisfying the ``first part'' of \eqref{w0:mAd} with some $\mDHC\ge1$ implies that the dHC-index of $\mAd$ is at least $\mDHC$.

It remains to show that $\mHC(\mAc) =\mDHC(\mAd)$:
If $\mHC=0$, then $\mAc$ is dissipative such that $\ker(\mAH)=\{0\}$. Hence, $\ker(\mI-\mAd^\topH\mAd)=\{0\}$ due to~Lemma~\ref{lemma:Cayley} (ii) and $\mAd$ is contractive, i.e.\ $\mDHC=0$. Conversely, if $\mDHC(\mAd)=0$ then $\ker(\mI-\mAd^\topH\mAd)=\{0\}$. Hence, $\ker(\mAH)=\{0\}$ by~Lemma~\ref{lemma:Cayley} (ii) and thus $\mAc$ is dissipative and $\mHC =0$.

If $\mHC\geq 1$, then let $v_0 \in\Cn\setminus\{0\}$ satisfy~\eqref{Av0} with $\mHC=\mHC(\mAc)$. Hence,
$$
  q(\mAc)(\mI-\mAc)v_0 \in \ker(\mI-\mAd^\topH\mAd)
$$
for all polynomials $q$ of order up to $\mHC-1$. In particular
$$
  w_0:=(\mI-\mAc)^{\mHC} v_0\in \ker(\mI-\mAd^\topH\mAd),
$$
and $w_0\ne0$ since $(\mI-\mAc)$ is regular.
Also, using \eqref{cayley} we find that
$$
   \mAd^j w_0 = (\mI+\mAc)^j(\mI-\mAc)^{\mHC-j}v_0 \in \ker (\mI -\mAd^\topH\mAd) \ ,
 \qquad
 j\in\{0,\ldots,\mDHC-1\}\,.
$$
Hence, \eqref{w0:mAd} implies $\mDHC(\mAd)\geq \mHC(\mAc)$.

Conversely, if $\mDHC\geq 1$, then let $w_0\in\Cn\setminus\{0\}$ satisfy~\eqref{w0:mAd} with $\mDHC=\mDHC(\mAd)$. Hence,
$$
  q(\mAd)w_0 \in \ker(\mI-\mAd^\topH\mAd)
$$
for all polynomials $q$ of order up to $\mDHC-1$.
We define $v_0:=(\mAd+\mI)^{\mDHC} w_0\ne0$ since $(\mAd+\mI)$ is regular.
Using \eqref{inv-cayley} and $\mI-\mAc=2(\mAd+\mI)^{-1}$ we compute
\[
 \mAc^j (\mI-\mAc)v_0
 =2(\mAd-\mI)^j(\mAd+\mI)^{\mDHC-j-1}w_0 \in \ker (\mI -\mAd^\topH\mAd) \ ,
 \qquad
 j\in\{0,\ldots,\mDHC-1\}\,.
\]
Hence, \eqref{Av0} implies $\mHC(\mAc)\geq \mDHC(\mAd)$.
Altogether, we deduce that $\mDHC(\mAd) =\mHC(\mAc)$, which finishes the proof of statement~(i).

(ii) The proof is analogous to that of (i).
\end{proof}

\begin{remark}\label{rem:staffans}
It was pointed out to the authors that the results presented in Lemmas~\ref{lemma:Cayley} and~\ref{lemma:Cayley:Inverse} as well as Theorem~\ref{thm:Cayley} can be proved in an alternative way by using the characterization via unobservability subspaces, see  Remark~\ref{rem:unobservability}.
The results then can be proved via  Lemmas 12.3.10 and 12.2.6 of \cite{Sta05}.
\end{remark}

\begin{example}\label{ODE:HC2}
Consider the continuous-time system~\eqref{ODE:B} with the coefficient matrix
\begin{equation} \label{matrix:A:HC2}
 \mAc
 =\begin{bmatrix}
  0 & -1 & 0 & 0 \\
  1 & 0 & -1 & 0 \\
  0 & 1 & -1 & 0 \\
  0 & 0 & 0 & -1
 \end{bmatrix} \,
\end{equation}
which is semi-dissipative and $\mB=-\mAc$ 
has hypocoercivity index $\mHC=2$.
Applying the Cayley transformation gives
\begin{equation} \label{matrix:Ad:HC2}
 \mAd
 =(\mI+\mAc)(\mI-\mAc)^{-1}
 =\tfrac15 \begin{bmatrix}
  1 & -4 & 2 & 0 \\
  4 & -1 & -2 & 0 \\
  2 & 2 & -1 & 0 \\
  0 & 0 & 0 & 0
 \end{bmatrix} \,
\end{equation}
which is semi-contractive and has hypocontractivity index $\mDHC=2$.
\end{example}

Unfortunately, the Cayley transform does not relate the shifted hypocoercivity index~$\mSHC$ and the scaled hypocontractivity index~$\mDSHC$ in the same way, as the following example illustrates.
\begin{example}\label{ex:notshift}
Consider the matrix
\[
 \tAd =\begin{bmatrix} 0 & 1 & 0 \\ 0 & 0 & 1 \\ 0 & 0 & 0 \end{bmatrix}
\]
which is hypocontractive with hypocontractivity index $\mDHC=2$.
The matrix $\mAd :=2\tAd$ is not semi-contractive, since $\mAd^\topH\mAd=\diag(0,4,4)$, but it has scaled hypocontractivity index~$\mDSHC=2$.
For the inverse Cayley transform of $\mAd$ we obtain
\[
 \mAc
 :=
 (\mAd-\mI)(\mAd+\mI)^{-1}
 =
 \begin{bmatrix}
  -1 & 4 &-8 \\
   0 &-1 & 4 \\
   0 & 0 &-1
 \end{bmatrix}\,,\qquad
 \mAH
 =
 \begin{bmatrix}
  -1 & 2 &-4 \\
   2 &-1 & 2 \\
  -4 & 2 &-1
 \end{bmatrix}
 \,.
\]
The eigenvalues of $\mAH$ are
$\lambda=3$, $\lambda_\pm=-3\pm\sqrt{12}$ and hence they are simple and the shifted HC-index of $\mAc$ is $\mSHC=1$.
This example shows that $2=\mDSHC(\mAd)\ne\mSHC(\mAc)=1$.
\end{example}

It is well-known, see e.g.~\cite[page 180]{Ka80}, that the Cayley transformation also directly relates the stabilizing solutions of the discrete-time and continuous-time Lyapunov equation.
We summarize these results in the following Lemma.
\begin{lemma}\label{lem:lyapdc}
Let $\mAc\in\Cnn$ be a matrix such that~\eqref{ODE:B} is (Lyapunov) stable and let $\mAd=(\mI+\mAc)(\mI-\mAc)^{-1}$.
Then $\mP$ is the positive definite solution $\mP_c=\mP$ of the continuous-time Lyapunov equation
\[
 \mAc^\topH \mP_c +\mP_c\mAc =-\mQ_c,
\]
for some positive semidefinite matrix $\mQ_c$ if and only if $\mP$ is the positive definite solution $\mP_d=\mP$ of the discrete-time Lyapunov equation
\[
 \mAd^\topH \mP_d\mAd -\mP_d =-\mQd
\]
for positive semidefinite $\mQd$, where the right hand sides are related via
$\mQd=2(\mI-\mAc^\topH)^{-1}\mQ_c(\mI-\mAc)^{-1}$.
\end{lemma}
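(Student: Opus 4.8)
The plan is to establish the stated equivalence by a single direct substitution of the Cayley transform into the discrete-time Lyapunov residual $\mAd^\topH \mP \mAd -\mP$, and then to read off the algebraic identity that links the two right-hand sides. First I would record that, since \eqref{ODE:B} is (Lyapunov) stable, all eigenvalues of $\mAc$ have non-positive real part, so $1$ is not an eigenvalue and $(\mI-\mAc)$ (hence also $(\mI-\mAc^\topH)$) is invertible; this is exactly the well-definedness of the Cayley transform already observed in Lemma~\ref{lemma:Cayley}. Consequently both $\mAd$ and the congruence factor $(\mI-\mAc)^{-1}$ are available throughout, and the common matrix $\mP=\mP_c=\mP_d$ can be treated as a fixed Hermitian matrix.

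The core step is the computation. Using $\mAd=(\mI+\mAc)(\mI-\mAc)^{-1}$ together with $\mAd^\topH=(\mI-\mAc^\topH)^{-1}(\mI+\mAc^\topH)$, and rewriting $\mP=(\mI-\mAc^\topH)^{-1}(\mI-\mAc^\topH)\mP(\mI-\mAc)(\mI-\mAc)^{-1}$, I would factor $(\mI-\mAc^\topH)^{-1}$ out on the left and $(\mI-\mAc)^{-1}$ on the right to obtain
\begin{equation*}
 \mAd^\topH \mP \mAd -\mP
 = (\mI-\mAc^\topH)^{-1}\Big[(\mI+\mAc^\topH)\mP(\mI+\mAc)-(\mI-\mAc^\topH)\mP(\mI-\mAc)\Big](\mI-\mAc)^{-1}.
\end{equation*}
Expanding the bracket, the terms $\mP$ and $\mAc^\topH\mP\mAc$ cancel while the cross terms double, leaving $2(\mAc^\topH\mP+\mP\mAc)$. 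Hence
\begin{equation*}
 \mAd^\topH \mP \mAd -\mP
 = 2(\mI-\mAc^\topH)^{-1}(\mAc^\topH\mP+\mP\mAc)(\mI-\mAc)^{-1}.
\end{equation*}
This identity, valid for every Hermitian $\mP$, carries the entire content of the lemma.

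From this identity both directions follow immediately. If $\mAc^\topH\mP_c+\mP_c\mAc=-\mQ_c$, then with $\mP:=\mP_c$ we get $\mAd^\topH\mP\mAd-\mP=-\mQd$ where $\mQd=2(\mI-\mAc^\topH)^{-1}\mQ_c(\mI-\mAc)^{-1}$, which is precisely the claimed relation. Writing $M:=(\mI-\mAc)^{-1}$ so that $\mQd=2M^\topH\mQ_c M$ exhibits $\mQd$ and $\mQ_c$ as congruent via the invertible matrix $M$, Sylvester's law of inertia (see e.g.~\cite{Gan59a}) yields $\mQd\geq0\iff\mQ_c\geq0$; the matrix $\mP$ itself is unchanged, so its positive definiteness is the same statement on both sides. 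Conversely, the relation can be solved for $\mQ_c=\tfrac12(\mI-\mAc^\topH)\mQd(\mI-\mAc)$, and reading the identity backwards shows that a positive definite $\mP_d$ solving the discrete equation with $\mQd\geq0$ gives the same $\mP$ solving the continuous equation with $\mQ_c\geq0$.

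I expect no genuine obstacle here: the result is an algebraic identity, so the only points requiring care are the bookkeeping in the expansion of the bracket and the explicit justification that invertibility of $(\mI-\mAc)$ is guaranteed by the stability hypothesis rather than merely assumed.
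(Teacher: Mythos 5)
Your proof is correct: the congruence identity $\mAd^\topH \mP \mAd -\mP = 2(\mI-\mAc^\topH)^{-1}(\mAc^\topH\mP+\mP\mAc)(\mI-\mAc)^{-1}$ is exactly the standard computation the paper leaves to the citation of Kailath, and it is the $\mP$-dependent generalization of the paper's own identity~\eqref{I-AhA} (which is the case $\mP=\mI$). The expansion of the bracket, the invertibility of $\mI-\mAc$ from Lyapunov stability, and the inertia argument for $\mQd=2M^\topH\mQ_c M$ are all in order, so this matches the intended argument.
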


In summary, we have an almost complete analogy between the properties of continuous-time and discrete-time systems.
We summarize these invariance properties under the Cayley transformation and the inverse Cayley transformation (if it exists) in Table~\ref{invcayley}.
\begin{table}
\begin{center}
\begin{tabular}{ll}
 continuous-time & discrete-time, \\[1mm]
\hline\hline\\[-3mm]
 $\ddt x =\mA_c x$ for $t\geq 0$ & $x_{k+1} =\mAd x_k $ for $k\in\N_0$, \\
 \hline
 (asymptotically) stable & (asymptotically) stable, \\
 \hline
 semi-dissipative & semi-contractive,\\
 \hline
 (hypo)coercive & (hypo)contractive,\\
 \hline
 $\mHC(\mA_c)$ &  $\mDHC(\mAd)$,\\
 \hline
 Lyapunov solution $\mP_c$ &Lyapunov solution $\mP_d$.\\
 \hline\\
\end{tabular}
\caption{Invariance of properties of continuous-time and discrete-time systems under Cayley transformation $\mAd=(\mI+\mAc)(\mI-\mAc)^{-1}$ and inverse Cayley transformation $\mAc=(\mAd-\mI)(\mAd+\mI)^{-1}$ \label{invcayley}
}
\end{center}
\end{table}

Finally we consider the \emph{scaled Cayley transform}
\begin{equation}\label{sc-cayley}
  \mAd(t):=(\mI+\frac{t}{2}\mAc)(\mI-\frac{t}{2}\mAc)^{-1} \quad\mbox{for } t>0\ ,
\end{equation}
which can be considered as a short-time approximation of the matrix exponential for \eqref{ODE:B}. Due to the scaling invariance of the hypocoercivity (index) of a matrix $\mAc\in\Cnn$ (see \S\ref{sec:trafo}), we readily obtain:
\begin{corollary}\label{cor-sc-cayley}
Let $\mAc\in\Cnn$ be semi-dissipative and negative hypocoercive. Then, for all $t>0$, the scaled Cayley transform $\mAd(t)$ is hypocontractive (due to Lemma \ref{lemma:Cayley} (i)), its dHC-index satisfies $\mDHC(\mAd(t))=\mHC(\mAc)=:\mDHC$ (due to Theorem \ref{thm:Cayley} (i)), and the norm of its powers satisfy
$$
 \|\mAd(t)^j\|_2 =1 \text{ for all } j=1,\ldots,\mDHC\,,
 \quad\text{and }\
 \|\mAd(t)^{\mDHC+1}\|_2 <1 \,
$$
(due to Theorem \ref{DS:A:semi-contractive:power-bounds}).
\end{corollary}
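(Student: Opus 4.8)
The plan is to observe that the scaled Cayley transform $\mAd(t)$ defined in \eqref{sc-cayley} is nothing but the ordinary Cayley transform \eqref{cayley} of the rescaled matrix $\tfrac{t}{2}\mAc$: writing it out gives $\mAd(t)=(\mI+\tfrac{t}{2}\mAc)(\mI-\tfrac{t}{2}\mAc)^{-1}$, which is exactly \eqref{cayley} with $\mAc$ replaced by $\tfrac{t}{2}\mAc$. Hence the entire statement reduces to checking that $\tfrac{t}{2}\mAc$ inherits the hypotheses required by Lemma~\ref{lemma:Cayley} and Theorem~\ref{thm:Cayley}, and then tracking how the indices behave under the positive scaling $\mAc\mapsto\tfrac{t}{2}\mAc$.

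First I would verify that $\tfrac{t}{2}\mAc$ is again semi-dissipative and negative hypocoercive for every $t>0$. Its Hermitian part is $\tfrac{t}{2}\mAH\le 0$ (since $\mAH\le 0$ and $t/2>0$), so it is semi-dissipative; and its eigenvalues are $\tfrac{t}{2}\lambda$ with $\Re(\tfrac{t}{2}\lambda)=\tfrac{t}{2}\Re(\lambda)<0$, so it is negative hypocoercive, and in particular the associated ODE is (Lyapunov) stable. Thus Lemma~\ref{lemma:Cayley} applies to $\tfrac{t}{2}\mAc$: part~(i) yields that $\mAd(t)$ is hypocontractive and part~(ii) yields that it is semi-contractive. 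Applying Theorem~\ref{thm:Cayley}~(i) to $\tfrac{t}{2}\mAc$ then gives $\mDHC(\mAd(t))=\mHC(\tfrac{t}{2}\mAc)$.

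The remaining point is the scaling invariance of the hypocoercivity index, i.e.\ $\mHC(\tfrac{t}{2}\mAc)=\mHC(\mAc)$. This holds because the characterizing condition $\bigcap_{j=0}^m\ker(\mAH\,\mAc^j)=\{0\}$ used in the proof of Theorem~\ref{thm:Cayley} is unchanged under $\mAc\mapsto s\mAc$ for any $s>0$: the Hermitian part scales to $s\mAH$ with the same kernel, and each power $\mAc^j$ is only multiplied by the positive factor $s^j$, so every kernel $\ker(\mAH\,\mAc^j)$ is left invariant; this is precisely the scaling invariance of the HC-index recorded in~\S\ref{sec:trafo}. Consequently $\mHC(\tfrac{t}{2}\mAc)=\mHC(\mAc)=:\mDHC$ for all $t>0$, which gives $\mDHC(\mAd(t))=\mHC(\mAc)$. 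Finally, since $\mAd(t)$ is semi-contractive and hypocontractive with finite index $\mDHC$, Theorem~\ref{DS:A:semi-contractive:power-bounds} applies verbatim and delivers the stated behavior of $\|\mAd(t)^j\|_2$. The proof is therefore essentially bookkeeping over previously established results; the only genuine step is the reinterpretation of the scaled transform as an ordinary Cayley transform of $\tfrac{t}{2}\mAc$, and I expect the scaling invariance of $\mHC$ to be the one place that warrants an explicit line of justification rather than a bare citation.
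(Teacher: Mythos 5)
Your proposal is correct and follows the same route the paper takes: the paper's entire justification is the observation that $\mAd(t)$ is the Cayley transform of $\tfrac{t}{2}\mAc$ combined with the scaling invariance of the HC-index recorded in \S\ref{sec:trafo}, followed by citations of Lemma~\ref{lemma:Cayley}, Theorem~\ref{thm:Cayley}, and Theorem~\ref{DS:A:semi-contractive:power-bounds}. Your extra line justifying the scaling invariance via the kernel characterization (or equivalently the rank condition (B1), whose blocks are only multiplied by positive powers of $t/2$) is a valid and welcome elaboration of what the paper leaves implicit.
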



\section*{Conclusions}
In this paper we have given a systematic analysis of different concepts related to the stability and short-time behavior of solutions to linear constant coefficient continuous-time and discrete-time systems.
While many results for the continuous-time setting were already established in \cite{AAM21} we have analyzed under which linear transformations the properties of asymptotic stability, semi-dissipativity and hypocoercivity stay invariant.

For linear time-invariant continuous-time systems, it is well-known that the exponential rate of the short-time behavior of the propagator norm~$\|e^{\mAc t}\|$ is determined by the logarithmic norm of the system matrix.
In this work, we established that the shifted hypocoercivity index characterizes the (remaining) algebraic decay of  the propagator norm in the short-time regime.

For each of the continuous-time results we have derived a corresponding result for the discrete-time case.
These include the relation between (asymptotic) stability, semi-contractivity and hypocontractivity.
We have also introduced the new concept of shifted hypocoercivity and scaled hypocontractivity.
We then have analyzed how the properties relate under the Cayley transformation that relates continuous-time and discrete-time systems.
While the role of the hypocontractivity index (or norm-one index) in the discrete-time setting has been recognized before, the corresponding concept---the hypocoercivity index---in the continuous-time setting and its role has been established only recently.

Future work will include the extension of the results of \cite{AAM21} for linear continuous-time dif\-fer\-en\-tial-algebraic systems to discrete-time descriptor systems.


{
\appendix
\section{Staircase forms}\label{sec:staircase}
In \cite{AAM21} a computationally feasible procedure to check the conditions of Lemma~\ref{lem:HC:equivalence} in the continuous-time case via a staircase form under unitary congruence transformations of the pair $(\mJ,\mR)=(\mBS,\mBH)$ has been presented.
\begin{lemma}[Staircase form for $(\mJ,\mR)$] \label{lem:SF}
Let $\mJ\in\Cnn$ be a skew-Hermitian matrix, and $\mR\in\Cnn$ be a nonzero Hermitian matrix.
Then there exists a unitary matrix $\mV\in\Cnn$, such that~$\mV \mJ \mV^\topH$ and~$\mV \mR \mV^\topH$ are block tridiagonal matrices of the form
\begin{equation} \label{matrices:staircase:J-R}
\begin{split}
\mV\ \mJ\ \mV^\topH
&= \begin{array}{l}
\left[ \begin{array}{ccccccc|c}
 \mJ_{1,1} & -\mJ_{2,1}^\topH & & & \cdots & & 0 & 0\\
 \mJ_{2,1} & \mJ_{2,2} & -\mJ_{3,2}^\topH & & & & &\\
  & \ddots & \ddots & \ddots & & & \vdots & \\
  & & \mJ_{k,k-1} & \mJ_{k,k} & -\mJ_{k+1,k}^\topH & & & \vdots \\
 \vdots & & & \ddots & \ddots & \ddots & & \\
  & & & & \mJ_{s-2,s-3} & \mJ_{s-2,s-2} & -\mJ_{s-1,s-2}^\topH & \\
 0 & & \cdots & & & \mJ_{s-1,s-2} & \mJ_{s-1,s-1} & 0\\ \hline
 0 & & & \cdots & & & 0 & \mJ_{ss}
\end{array}\right]
 \begin{array}{c}
  n_1\\ n_2\\ \vdots\\ \\ n_k\\ \vdots \\ n_{s-2} \\n_{s-1}\\ n_s
 \end{array} \\
 \quad\hspace{5pt} n_1 \hspace{170pt} n_{s-2} \hspace{30pt} n_{s-1}  \hspace{25pt} n_s
\end{array}, \quad \\
\mV\ \mR\ \mV^\topH
&= \begin{array}{l} \left[ \begin{array}{cc}
     \mR_1 & 0\\
     0 & 0 \\
     \vdots & \vdots\\
     \vdots & \vdots \\
     0 & 0 \end{array}\right]
 \begin{array}{c}
     n_1 \\ n_2 \\ \vdots\\ \vdots \\ n_s \end{array} \\
  \;\;\; n_1  \;\; n-n_1
\end{array},
\end{split}
\end{equation}
where $n_1 \geq n_2 \geq \cdots \geq n_{s-1} >0$, $n_s \geq 0$, and $\mR_1\in\C^{n_1,n_1}$ is nonsingular.

If $\mR$ is nonsingular, then~$s=2$ and~$n_2=0$.
For example, $\mV=\mI$, $\mJ_{1,1}=\mJ$ and~$\mR_1=\mR$ is an admissible choice.

If $\mR$ is singular, then $s\geq 3$ and the matrices~$\mJ_{i,i-1}$, $i=2,\ldots,s-1$, in the subdiagonal have full row rank and are of the form
\[
\mJ_{i,i-1} = \begin{bmatrix} \Sigma_{i,i-1} & 0 \end{bmatrix}, \quad
i =2,\ldots, s-1,
\]
%
 %
with nonsingular matrices~$\Sigma_{i,i-1}\in\C^{n_i,n_i}$, moreover $\Sigma_{s-1,s-2}$ is a real-valued diagonal matrix.
\end{lemma}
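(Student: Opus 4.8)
The plan is to reduce the pair $(\mJ,\mR)$ in two stages: first normalize the Hermitian matrix $\mR$, and then run a controllability-type staircase reduction on $\mJ$ relative to $\ran\mR$, using the skew-Hermitian symmetry to promote the resulting block-Hessenberg form to the block-\emph{tri}diagonal form displayed in~\eqref{matrices:staircase:J-R}. This mirrors the construction in~\cite{AAM21}.

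First I would diagonalize $\mR$. Since $\mR=\mR^\topH$ is Hermitian and nonzero, a spectral decomposition gives a unitary matrix that brings $\mR$, by congruence, to $\diag(\mR_1,0)$ with $\mR_1\in\C^{n_1\times n_1}$ nonsingular Hermitian and $n_1=\rank\mR\ge 1$; applying the same congruence to $\mJ$ keeps it skew-Hermitian. From here on, the splitting $\Cn=\ran\mR\oplus\ker\mR$ is fixed, and every further unitary transformation I use is block diagonal with respect to it (acting either inside $\ran\mR$ or inside $\ker\mR$, never mixing the two). Consequently $\mR$ retains the form $\diag(\tilde\mR_1,0)$ with a nonsingular Hermitian $(1,1)$-block throughout, which is exactly the claimed shape of $\mV\mR\mV^\topH$.

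Second, I would build the staircase on $\mJ$ by the reachability recursion. Setting $U_1:=\ran\mR$ and $U_{k+1}:=U_k+\mJ U_k$, let $V_{k+1}$ be the orthogonal complement of $U_k$ in $U_{k+1}$ and $n_{k+1}:=\dim V_{k+1}$. Concretely, at each step a rank-revealing (SVD-based) compression of the coupling block $\mJ_{k+1,k}$, realized by a unitary on the still-unprocessed coordinates (all lying in $\ker\mR$), splits off a new block $V_{k+1}$ onto which $\mJ$ maps $V_k$ surjectively; this surjectivity gives the full-row-rank property of the subdiagonal blocks and the monotonicity $n_1\ge n_2\ge\cdots\ge n_{s-1}>0$. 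The recursion stops once $\mJ V_{s-1}\subseteq U_{s-1}$, and I set $V_s:=U_{s-1}^\perp$; since $\ran\mR\subseteq U_{s-1}$ we have $V_s\subseteq\ker\mR$, and because $\mJ$ is skew-Hermitian the invariant subspace $U_{s-1}$ has invariant orthogonal complement, so $V_s$ decouples as the bottom-right block $\mJ_{ss}$ with $n_s\ge0$. The block-tridiagonal structure is the crucial point: by construction $\mJ V_k\subseteq V_1\oplus\cdots\oplus V_{k+1}$, killing all blocks strictly below the subdiagonal, while the identity $\ip{\mJ v}{w}=-\ip{v}{\mJ w}$ applied to $v\in V_k$, $w\in V_j$ with $j<k-1$ (so that $\mJ w\perp V_k$) kills all blocks strictly above the superdiagonal; skew-Hermiticity then forces the superdiagonal blocks to equal $-\mJ_{k+1,k}^\topH$.

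Finally, I would normalize the subdiagonal blocks to the shape $[\Sigma_{i,i-1},0]$. As each $\mJ_{i,i-1}\in\C^{n_i\times n_{i-1}}$ has full row rank $n_i$, a unitary change of basis of the domain $V_{i-1}$ that sends $\ker\mJ_{i,i-1}$ to the last $n_{i-1}-n_i$ coordinates produces $[\Sigma_{i,i-1},0]$ with $\Sigma_{i,i-1}$ nonsingular. The delicate bookkeeping---which I expect to be the main obstacle---is that the basis of each intermediate block $V_k$ is shared, serving as the domain basis of $\mJ_{k+1,k}$ and the codomain basis of $\mJ_{k,k-1}$. Processing the couplings in increasing order of $k$ resolves this: column-normalizing $\mJ_{k+1,k}$ only rotates $V_k$, which left-multiplies the already-shaped $\mJ_{k,k-1}=[\Sigma_{k,k-1},0]$ and hence merely replaces $\Sigma_{k,k-1}$ by another nonsingular matrix without disturbing the $[\,\cdot\,,0]$ pattern (the diagonal and superdiagonal blocks carry no shape constraint). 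Once all couplings up through $\mJ_{s-1,s-2}$ are set, the basis of the last coupled block $V_{s-1}$ is still free---it is the domain of no further nonzero coupling, since $\mJ_{s,s-1}=0$---so I would apply the singular value decomposition of $\mJ_{s-1,s-2}$ to make $\Sigma_{s-1,s-2}$ real nonnegative diagonal; this is the one block where such a normalization survives. The stated special cases are then immediate: if $\mR$ is nonsingular then $\ran\mR=\Cn$, whence $n_1=n$, $s=2$, $n_2=0$, and $\mV=\mI$ with $\mJ_{1,1}=\mJ$, $\mR_1=\mR$ is admissible.
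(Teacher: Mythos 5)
Your construction is correct and is essentially the approach the paper takes: the paper defers the proof of this lemma to \cite{AAM21} and only writes out the analogous argument for the discrete pair $(\mU,\mQ)$ in Lemma~\ref{lem:SFQU} via Algorithm~\ref{algorithm:staircase:U-H}, which is exactly your scheme---spectral decomposition of the Hermitian factor followed by iterated SVD-based compressions of the subdiagonal couplings, your reachability subspaces $U_{k+1}=U_k+\mJ U_k$ being the spans that algorithm computes implicitly, and the skew-Hermitian symmetry supplying the upgrade from block Hessenberg to block tridiagonal just as you argue. The one place your construction and the statement diverge is the degenerate case where $\mR$ is singular but $\ran\mR$ is already $\mJ$-invariant: your recursion then terminates with $s=2$ and $n_s=n-n_1>0$ rather than the claimed $s\ge3$ with $n_{s-1}>0$, but this is a bookkeeping ambiguity in the lemma's conventions (the paper's own algorithm would output $s=3$ with $n_2=0$ there) rather than a flaw in your argument.
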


A system~\eqref{ODE:B} with an accretive matrix $\mB=\mBS+\mBH$ is hypocoercive if $n_s=0$ and if this is the case then the hypocoercivity index is $\mHC(\mB)=s-2$.

A similar staircase form can be derived in the discrete-time case.
It is based on the polar decomposition $\mA_d= \mU\mQ$, see Proposition \ref{prop:PolDec}. 
%
\begin{lemma}[Staircase form for $(\mU,\mQ)$] \label{lem:SFQU}
Let $\mU\in\Cnn$ be a unitary matrix, and $\mQ\in\Cnn$ be a nonzero semi-contractive Hermitian matrix.
Then there exists a unitary matrix $\mV\in\Cnn$, such that~$\mV \mQ \mV^\topH$ and~$\mV \mU \mV^\topH$ are block upper Hessenberg matrices of the form
\begin{equation} \label{matrices:staircase:HU}
\begin{split}
\mV\ \mU\ \mV^\topH
&= \begin{array}{l}
\left[ \begin{array}{ccccc|c}
 \mU_{1,1} & \mU_{1,2} &\cdots & \cdots &  \mU_{1,s-1} & 0\\
 \mU_{2,1} & \mU_{2,2} & \mU_{2,3} & \cdots &  \mU_{2,s-1} &0\\
  & \ddots & \ddots & \ddots &  \ddots & \vdots  \\
   & & \mU_{s-2,s-3} & \mU_{s-2,s-2} & \mU_{s-2,s-1} & 0\\
 0 & \cdots &  0& \mU_{s-1,s-2} & \mU_{s-1,s-1} & 0\\ \hline
 0 &  \cdots & & & 0 & \mU_{s,s}
\end{array}\right]
 \begin{array}{c}
  n_1\\ n_2\\ \vdots \\ n_{s-2} \\n_{s-1}\\ n_s
 \end{array} \\
%
%
\end{array}, \quad \\
\mV\ \mQ\ \mV^\topH
&= \begin{array}{l} \left[ \begin{array}{ccccc|c}
     \mQ_1 & 0 & \cdots & \cdots &0 &0\\
     0 &  \mI_{n_2} & 0 & \cdots & \vdots & \vdots\\
     \vdots &  0 & \ddots & \ddots & \vdots & \vdots\\
         \vdots &  \ddots  & \ddots & \ddots & \vdots & \vdots\\
     \vdots & \vdots  & \ddots  & 0 & \mI_{n_{s-1}} & 0\\
     \hline
     0 & 0 & \cdots & \cdots & 0 & \mI_{n_{s}} \end{array}\right]
 \begin{array}{c}
     n_1 \\ n_2 \\ \vdots\\ \\ \vdots \\ n_{s-1}
      \\ n_s \end{array} \\
\end{array},
\end{split}
\end{equation}
where $n_1 \geq n_2 \geq \cdots \geq n_{s-1}>0$, $n_s \geq 0$, and $\mQ_1\in\C^{n_1,n_1}$ is contractive and Hermitian.

If $\mQ$ is contractive, then~$s=2$ and~$n_2=0$.
Then $\mV=\mI$, $\mU_{1,1}=\mU$ and~$\mQ_1=\mQ$ is an admissible choice.

If $\mQ$ is not contractive, then $s\geq 3$ and the matrices~$\mU_{i,i-1}$, $i=2,\ldots,s-1$, in the subdiagonal have full row rank and are of the form
\[
 \mU_{i,i-1}
 =\begin{bmatrix} \Sigma_{i,i-1} & 0 \end{bmatrix}, \quad i =2,\ldots, s-1,
\]
with nonsingular matrices~$\Sigma_{i,i-1}\in\C^{n_i,n_i}$, moreover $\Sigma_{s-1,s-2}$ is a real-valued diagonal matrix.
\end{lemma}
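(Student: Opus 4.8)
The plan is to transcribe the construction behind the staircase form for $(\mJ,\mR)$ in Lemma~\ref{lem:SF}, replacing the skew-Hermitian matrix $\mJ$ by the unitary matrix $\mU$ and the Hermitian matrix $\mR$ by $\mI-\mQ^2$. Since $\mQ$ is Hermitian and semi-contractive, $\mI-\mQ^2$ is Hermitian and positive semi-definite, and (using $\mQ\ge 0$ to exclude the eigenvalue $-1$) its kernel $\ker(\mI-\mQ^2)=\ker(\mI-\mQ)$ is exactly the eigenspace of $\mQ$ for the eigenvalue $1$; in particular $\ran(\mI-\mQ^2)$ and $\ker(\mI-\mQ^2)$ are orthogonal, complementary and $\mQ$-invariant. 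First I would take the first $n_1:=\dim\ran(\mI-\mQ^2)$ columns of $\mV$ to be an orthonormal basis of $\ran(\mI-\mQ^2)$; on this subspace $\mQ$ restricts to a Hermitian matrix $\mQ_1$ whose eigenvalues lie in $[0,1)$, so $\mQ_1$ is contractive, while $\mQ=\mI$ on the complement $\ker(\mI-\mQ^2)$.

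Next I would run the controllability staircase algorithm for the pair $(\mU,\mI-\mQ^2)$: set $\fS_k:=\sum_{j=0}^{k-1}\mU^j\,\ran(\mI-\mQ^2)$ and choose orthonormal bases for the successive increments $\fS_k\ominus\fS_{k-1}$, of dimensions $n_2,n_3,\ldots$, appending them to $\mV$. Two structural facts follow at once. Every increment beyond the first is orthogonal to $\fS_1=\ran(\mI-\mQ^2)$, hence lies in $\ker(\mI-\mQ^2)$, where $\mQ=\mI$; therefore $\mV\mQ\mV^\topH$ has exactly the claimed block-diagonal form $\mQ_1\oplus\mI_{n_2}\oplus\cdots\oplus\mI_{n_s}$. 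And since $\mU\fS_k\subseteq\fS_{k+1}$ by construction, $\mV\mU\mV^\topH$ is block upper Hessenberg; because block $k$ is the orthogonal projection onto $\fS_{k-1}^\perp$ of the $\mU$-image of block $k-1$, the subdiagonal block $\mU_{k,k-1}$ maps onto block $k$ and so has full row rank $n_k$. This full row rank forces the monotonicity $n_1\geq n_2\geq\cdots\geq n_{s-1}>0$, and a blockwise singular value decomposition brings each $\mU_{k,k-1}$ into the normalized shape $[\,\Sigma_{k,k-1}\ \ 0\,]$ with $\Sigma_{k,k-1}$ square and nonsingular, a final SVD making $\Sigma_{s-1,s-2}$ real diagonal. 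If $\mQ$ is contractive then $\ran(\mI-\mQ^2)=\Cn$, giving $s=2$, $n_2=0$, $\mV=\mI$, $\mQ_1=\mQ$.

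The one point that uses more than the generic controllability construction is the \emph{complete} decoupling of the trailing block $\mU_{s,s}$ of size $n_s=n-\dim\fS_{s-1}$. Once the reachability chain stabilizes, $\fS_{s-1}$ is $\mU$-invariant, so the lower-left coupling vanishes as in any staircase form. The additional vanishing of the upper-right coupling is where unitarity enters: an invariant subspace of the invertible map $\mU$ satisfies $\mU\fS_{s-1}=\fS_{s-1}$, and a unitary $\mU$ then also preserves the orthogonal complement, $\mU\fS_{s-1}^\perp=\fS_{s-1}^\perp$, so block $s$ decouples on both sides. This parallels Lemma~\ref{lem:SF}, where skew-Hermiticity renders $\mJ$ block tridiagonal with a decoupled last block; here unitarity is the relevant form of normality, and it yields the decoupled last block together with a one-sided (Hessenberg, not tridiagonal) coupling pattern, which is why $\mU$ is only block upper Hessenberg.

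I expect the main obstacle to be organizational rather than conceptual: one must check that the blockwise SVD normalizations producing $[\,\Sigma_{k,k-1}\ \ 0\,]$ and the real-diagonal form of $\Sigma_{s-1,s-2}$ can be carried out by a single unitary $\mV$ without destroying the zero pattern already fixed. The Hessenberg shape and the identity blocks of $\mV\mQ\mV^\topH$ are the delicate part, but the latter are in fact safe because every admissible refinement acts within $\ker(\mI-\mQ^2)$, where $\mQ=\mI$ is invariant under any unitary change of basis; confirming that these refinements commute with the trailing-block decoupling is the step that demands the most care.
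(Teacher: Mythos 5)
Your proof is correct and takes essentially the same route as the paper: the paper's Algorithm~\ref{algorithm:staircase:U-H} is exactly the controllability-staircase construction for the pair $(\mU,\mI-\mQ^2)$ that you describe in subspace language --- a spectral split of $\mQ$ isolating $\ker(\mI-\mQ^2)$ first, then successive SVDs of the subdiagonal blocks, which realize orthonormal bases of your increments $\fS_k\ominus\fS_{k-1}$ and produce the normalized blocks $[\,\Sigma_{i,i-1}\ \ 0\,]$. Your two added observations --- that unitarity ($\mU\fS_{s-1}=\fS_{s-1}$, hence $\mU\fS_{s-1}^{\perp}=\fS_{s-1}^{\perp}$) is what decouples the trailing block on both sides, and that $\mQ\ge 0$ is needed so that $\ker(\mI-\mQ^2)$ is the eigenvalue-$1$ eigenspace (Step 0 of the algorithm tacitly assumes $-1\notin\Lambda(\mQ)$) --- make explicit points the paper's algorithmic proof leaves implicit.
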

\begin{proof}
If~$\mQ$ is contractive, then~$n_1=n$ and we have to choose~$s=2$ and~$n_2=0$ to fit~$\mU$ into the proposed structure in~\eqref{matrices:staircase:HU}.
\\
If~$\mQ$ is not contractive, then we have the following constructive proof.\\
\begin{breakablealgorithm}
\caption{Staircase algorithm for pair~$(\mU,\mQ)$}
\label{algorithm:staircase:U-H}
\begin{algorithmic}[1]
\REQUIRE $(\mU,\mQ)$\\
 ----------- {\em Step 0} -----------
\STATE Perform a (spectral) decomposition of $\mQ$ such that
\[
\mQ
=\mV_1 \begin{bmatrix} \tQ_1 & 0 \\ 0 & \mI \end{bmatrix} \mV_1^\topH ,
\]
with $\mV_1\in \Cnn$ unitary, $\tQ_1\in \C^{n_1, n_1}$ contractive and Hermitian.
\STATE Set $\mV := \mV_1^\topH$, $\tQ:=\mV_1^\topH\ \mQ\ \mV_1$,
\[
 \tU
 := \mV_1^\topH\ \mU\ \mV_1
 =:\begin{bmatrix}
   \tU_{1,1} & \tU_{1,2} \\
   \tU_{2,1} & \tU_{2,2}
   \end{bmatrix}.
\]
%
%
\newline ----------- {\em Step 1} -----------
\STATE Perform a singular value decomposition (SVD) of~$\tU_{2,1}\in\C^{(n-n_1)\times n_1}$ such that
\[
 \tU_{2,1}
 = \mW_{2,1} \begin{bmatrix} \tSigma_{2,1} & 0\\ 0 & 0 \end{bmatrix} \mV^\topH_{2,1},
\]
with unitary matrices~$\mW_{2,1}$ and~$\mV_{2,1}$ as well as a positive definite, diagonal matrix $\tSigma_{2,1}\in \mathbb R^{n_2, n_2}$.
\STATE Set $\mV_2 := \diag(\mV_{2,1}^\topH,\ \mW_{2,1}^\topH)$,\ $\mV:= \mV_2 \mV$.
%
\STATE Set
\[
\def\arraystretch{1.4}
\tU := \mV_2\ \tU\ \mV_2^\topH
=: \left[ \begin{array}{c|cc}
 \tU_{1,1} & \tU_{1,2} & \tU_{1,3}\\
 \hline
 \tU_{2,1} & \tU_{2,2} & \tU_{2,3} \\
 0     & \tU_{3,2} & \tU_{3,3}
\end{array}\right],\qquad
\def\arraystretch{1}
\tR
:= \mV_2 \tQ \mV_2^\topH
=: \left[ \begin{array}{c|cc}
\tQ_1 & 0 & 0 \\
\hline
0 & \mI_{n_2} & 0 \\
0 & 0 & \mI
\end{array}\right].
\]
(The lines indicate the partitioning of the block matrices~$\tU$ and~$\tQ$ in the previous step.)
\newline ----------- {\em Step 2} -----------
\STATE $i := 3$
\WHILE{$n_{i-1} > 0$ \OR $\tU_{i,i-1} \neq 0$}
\STATE Perform an SVD of $\tU_{i,i-1}$ such that
\[
\tU_{i,i-1}
= \mW_{i,i-1} \begin{bmatrix} \tSigma_{i,i-1} & 0\\ 0 & 0 \end{bmatrix} \mV^\topH_{i,i-1} ,
\]
with unitary matrices~$\mW_{i,i-1}$ and~$\mV_{i,i-1}$ as well as a positive definite, diagonal matrix $\tSigma_{i,i-1}\in \R^{n_i, n_i}$.
\STATE Set $\mV_i := \diag(\mI_{n_1},\ldots,\mI_{n_{i-2}},\ \mV_{i,i-1}^\topH,\ \mW_{i,i-1}^\topH )$, $\mV:= \mV_i \mV$.
%
\STATE Set
\[
\tU
:= \mV_i\ \tU\ \mV_i^\topH
=: \begin{bmatrix}
 \tU_{1,1} & \tU_{1,2}  & \cdots      & \cdots      & \tU_{1,i+1} \\
 \tU_{2,1} & \tU_{2,2}  & \tU_{2,3}   &             & \vdots \\
    0      & \ddots     & \ddots      & \ddots      & \\
 \vdots    & \ddots     & \tU_{i,i-1} & \tU_{i,i}   & \tU_{i,i+1} \\
    0      & \cdots     &     0       & \tU_{i+1,i} & \tU_{i+1,i+1}
\end{bmatrix} ,
\
\text{where } \tU_{i,i-1} = [ \tSigma_{i,i-1}\quad 0].
\]
\STATE $i:= i+1$
\ENDWHILE
\newline ----------- {\em Step 3} -----------
\STATE $s:= i$
\FOR{$i=1,\ldots,s$}
 \FOR{$j=i,\ldots,s$}
  \STATE Set $\mU_{i,j} :=\tU_{i,j}$.
 \ENDFOR
\ENDFOR
\FOR{$i=2,\ldots,s$}
  \STATE Set $\mU_{i,i-1} :=\tU_{i,i-1}$.
\ENDFOR
\ENSURE Unitary matrix~$\mV$. 
\\
\end{algorithmic}
\end{breakablealgorithm}

\bigskip

It is clear that Algorithm~\ref{algorithm:staircase:U-H} terminates after a finite number of steps, either with~$n_{i-1}=0$ or~$\mU_{i,i-1}=0$.
We also note that Step 3 provides the nonzero entries of the r.h.s.\ of $\mV\mU\mV^\topH$ in \eqref{matrices:staircase:HU}.
\end{proof}
%

Note that Algorithm~\ref{algorithm:staircase:U-H} can be applied to the polar decomposition $\mPd\mUd$ 
analogously.
In both cases it immediately follows that $\mA_d$ is hypocontractive if $n_s=0$ and the hypocontractivity index is then $\mDHC(\mAd)=s-2$.

\section{Equivalent hypocoercivity conditions}

The following lemma is a simple generalization of Lemma 2.3 in \cite{AASt15} and Proposition 1 in \cite{AAC18}.
\begin{lemma}\label{lem:Definiteness}
Let $\mD\in\Cnn$ be positive semi-definite and $\mC\in\C^{n\times n}$.
Then the following are equivalent:
\begin{itemize}
\item [(E1)]
There exists $m\in\N_0$ such that
\begin{equation}\label{condition:KalmanRank-App}
 \rank[{\mD},\mC{\mD},\ldots,\mC^m {\mD}]=n \,.
\end{equation}
\item [(E2)] 
There exists $m\in\N_0$ such that
\begin{equation}\label{index-App}
 \sum_{j=0}^m \mC^j \mD (\mC^\topH)^j > 0 \,.
\end{equation}
\end{itemize}
Moreover, the smallest possible $m\in\N_0$ in (E1) and (E2) coincide.
\end{lemma}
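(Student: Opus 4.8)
The plan is to prove a sharper statement than asserted, namely that for \emph{each fixed} $m\in\N_0$ the conditions (E1) and (E2) are equivalent. This at once yields the equivalence of the two existence statements and the coincidence of the smallest admissible $m$. The unifying idea is that both (E1) and (E2) can be recast as the triviality of one and the same subspace, namely $\bigcap_{j=0}^m \ker\big(\mD(\mC^\topH)^j\big)$.

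First I would exploit the positive semi-definiteness of $\mD$ through its unique Hermitian positive semi-definite square root $\mD^{1/2}$, so that $\mD=\mD^{1/2}\mD^{1/2}$ and $\ker\mD=\ker\mD^{1/2}$. Setting $\mM_m := \sum_{j=0}^m \mC^j \mD (\mC^\topH)^j$, which is Hermitian, I would compute for an arbitrary $x\in\Cn$, using $(\mC^\topH)^j=(\mC^j)^\topH$ and the self-adjointness of $\mD^{1/2}$,
\[
 \ip{x}{\mM_m x} = \sum_{j=0}^m \norm{\mD^{1/2}(\mC^\topH)^j x}^2 .
\]
Since every summand is nonnegative, this quantity vanishes precisely when $\mD^{1/2}(\mC^\topH)^j x=0$ for all $j\in\{0,\ldots,m\}$, which by $\ker\mD=\ker\mD^{1/2}$ is equivalent to $\mD(\mC^\topH)^j x=0$ for all such $j$. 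Hence (E2), i.e.\ $\mM_m>0$, holds if and only if $x=0$ is the only vector in $\bigcap_{j=0}^m \ker\big(\mD(\mC^\topH)^j\big)$.

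Next I would translate (E1) into the same language. The block matrix $[\mD,\mC\mD,\ldots,\mC^m\mD]$ has size $n\times(m+1)n$, so $\rank=n$ means full row rank, i.e.\ a trivial left kernel. A vector $x$ lies in this left kernel exactly when $x^\topH\mC^j\mD=0$ for all $j$, and taking conjugate transposes (using $\mD=\mD^\topH$) this reads $\mD(\mC^\topH)^j x=0$ for all $j$. Thus (E1) is equivalent to $\bigcap_{j=0}^m \ker\big(\mD(\mC^\topH)^j\big)=\{0\}$ as well.

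Since (E1) and (E2) reduce to the identical subspace condition for every fixed $m$, they hold for exactly the same set of indices, and in particular their least admissible values agree, which completes the proof. I do not anticipate a genuine obstacle: the only points requiring care are the bookkeeping of conjugate transposes, specifically $(\mC^\topH)^j=(\mC^j)^\topH$ and $(x^\topH\mC^j\mD)^\topH=\mD(\mC^\topH)^j x$, together with the elementary fact that $\ip{w}{\mD w}=0$ forces $\mD w=0$ for positive semi-definite $\mD$; both are routine.
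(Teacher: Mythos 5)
Your proof is correct and follows essentially the same route as the paper's: both arguments hinge on writing $\sum_{j=0}^m \mC^j\mD(\mC^\topH)^j$ as a Gram matrix built from $\mD^{1/2}$ (the paper via $\mE\mE^\topH$ with $\mE=[\mD^{1/2},\ldots,\mC^m\mD^{1/2}]$, you via the quadratic form $\sum_j\|\mD^{1/2}(\mC^\topH)^j x\|^2$) and on the identification $\ker\mD=\ker\mD^{1/2}$, reducing both (E1) and (E2) to triviality of $\bigcap_{j=0}^m\ker\big(\mD(\mC^\topH)^j\big)$ for each fixed $m$. Your explicit fixed-$m$ formulation makes the coincidence of the minimal indices slightly more transparent, but the substance is identical.
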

\begin{proof}
First, we show that (E1) is equivalent to:
\begin{itemize}
\item [(E1')]
There exists $m\in\N_0$ such that
\begin{equation*}
 \rank[{\mD^{1/2}},\mC{\mD}^{1/2},\ldots,\mC^m {\mD}^{1/2}]=n \,,
\end{equation*}
\end{itemize}
with the same $m$ as in (E1):\\
(E1) holds iff the statement
\[
 x^\topH [{\mD},\mC{\mD},\ldots,\mC^m {\mD}]=0\quad \mbox{for some } x\in\C^n,
\]
i.e.\ $\mD(\mC^\topH)^j x=0$ for $j=0,\ldots,m$ implies $x=0$. Now, since $\ker (\mD)= \ker (\mD^{1/2})$, (E1) and (E1') are equivalent.

Next, let (E1) hold and define
\[
 \mE:=[\mD^{1/2},\,\mC\mD^{1/2},...,\,\mC^m\mD^{1/2}] \in\C^{n\times(m+1)n}\,.
\]
Then,
\[
 \C^{n\times n}\ni \mE\ \mE^\topH = \sum_{j=0}^{m} \mC^j\mD(\mC^\topH)^j \ge 0
\]
has rank $n$ and \eqref{index-App} follows.\\
Conversely, let (E2) hold but assume we had $\rank \mE<n$.
Then, $\exists \,0\ne x\in\C^n$ with $x^\topH\mE=0$.
Hence, $x^\topH\mE\ \mE^\topH=0$ would contradict \eqref{index-App}.
\end{proof}

}

\section*{Acknowledgments}

The first author (FA) was supported by the Austrian Science Fund (FWF) via the FWF-funded SFB \# F65.
The second author (AA) was supported by the Austrian Science Fund (FWF), partially via the FWF-doctoral school "Dissipation and dispersion in non-linear partial differential equations'' (\# W1245) and the FWF-funded SFB \# F65.
The third author (VM) was supported by Deutsche Forschungsgemeinschaft (DFG) via the DFG-funded SFB \# 910.

%

\bibliography{AAM-2022-HCIndex}

\begin{thebibliography}{10}

\bibitem{AAC18}
F.~Achleitner, A.~Arnold, and E.~A. Carlen.
\newblock On multi-dimensional hypocoercive {BGK} models.
\newblock {\em Kinet. Relat. Models}, 11(4):953--1009, 2018.

\bibitem{AAC22}
F.~Achleitner, A.~Arnold, and E.~A. Carlen.
\newblock The hypocoercivity index for the short time behavior of linear
  time-invariant {ODE} systems.
\newblock {\em arXiv preprint arXiv:2109.10784v2}, 2021.

\bibitem{AAM21}
F.~Achleitner, A.~Arnold, and V.~Mehrmann.
\newblock Hypocoercivity and controllability in linear semi-dissipative
  {H}amiltonian ordinary differential equations and differential-algebraic
  equations.
\newblock {\em ZAMM - Journal of Applied Mathematics and Mechanics /
  Zeitschrift f{\"u}r Angewandte Mathematik und Mechanik}, page e202100171,
  2021.

\bibitem{AASt15}
F.~Achleitner, A.~Arnold, and D.~St\"{u}rzer.
\newblock Large-time behavior in non-symmetric {F}okker--{P}lanck equations.
\newblock {\em Riv. Math. Univ. Parma (N.S.)}, 6(1):1--68, 2015.

\bibitem{Adr95}
L.~Y. Adrianova.
\newblock {\em Introduction to linear systems of differential equations}.
\newblock Trans. Math. Monographs, Vol. 146, AMS, Providence, RI, 1995.

\bibitem{ArEr14}
A.~Arnold and J.~Erb.
\newblock Sharp entropy decay for hypocoercive and non-symmetric
  {F}okker--{P}lanck equations with linear drift.
\newblock {\em arXiv preprint arXiv:1409.5425}, 2014.

\bibitem{BanMNV20}
D.~Bankmann, V.~Mehrmann, Y.~Nesterov, and P.~{Van Dooren}.
\newblock Computation of the analytic center of the solution set of the linear
  matrix inequality arising in continuous- and discrete-time passivity
  analysis.
\newblock {\em Vietnam J. Mathematics}, 48:633--660, 2020.
\newblock {\tt http://arxiv.org/abs/1904.08202}.

\bibitem{Be18}
D.~S. Bernstein.
\newblock {\em Scalar, vector, and matrix mathematics}.
\newblock Princeton University Press, Princeton, NJ, 2018.

\bibitem{Da04}
B.~N. Datta.
\newblock {\em Numerical methods for linear control systems}.
\newblock Elsevier Academic Press, San Diego, CA, 2004.

\bibitem{Gan59a}
F.~R. Gantmacher.
\newblock {\em The theory of matrices. {V}ols. 1, 2}.
\newblock Translated by K. A. Hirsch. Chelsea Publishing Co., New York, 1959.

\bibitem{GauW10}
H.-L. Gau and P.~Y. Wu.
\newblock Defect indices of powers of a contraction.
\newblock {\em Linear Algebra Appl.}, 432(11):2824--2833, 2010.

\bibitem{MR3859144}
N.~Gillis, V.~Mehrmann, and P.~Sharma.
\newblock Computing the nearest stable matrix pairs.
\newblock {\em Numer. Linear Algebra Appl.}, 25(5):e2153, 16, 2018.

\bibitem{HiPr10}
D.~Hinrichsen and A.~J. Pritchard.
\newblock {\em Mathematical systems theory {I}}.
\newblock Springer, Heidelberg, 2010.

\bibitem{HoJo13}
R.~A. Horn and C.~R. Johnson.
\newblock {\em Matrix analysis}.
\newblock Cambridge University Press, Cambridge, second edition, 2013.

\bibitem{JoSm05}
C.~R. Johnson and R.~L. Smith.
\newblock Closure properties.
\newblock In F.~Zhang, editor, {\em The Schur Complement and Its Applications},
  pages 111--136. Springer US, Boston, MA, 2005.

\bibitem{JoSm06}
C.~R. Johnson and R.~L. Smith.
\newblock Closure of matrix classes under {S}chur complementation, including
  singularities.
\newblock In {\em Algebra and its applications}, volume 419 of {\em Contemp.
  Math.}, pages 185--200. Amer. Math. Soc., Providence, RI, 2006.

\bibitem{Ka80}
T.~Kailath.
\newblock {\em Linear systems}.
\newblock Prentice-Hall, Inc., Englewood Cliffs, N.J., 1980.

\bibitem{LanT85}
P.~Lancaster and M.~Tismenetsky.
\newblock {\em The Theory of Matrices}.
\newblock Academic Press, Orlando, Fl, 2nd edition, 1985.

\bibitem{LaS86}
J.~P. LaSalle.
\newblock {\em The stability and control of discrete processes}, volume~62 of
  {\em Applied Mathematical Sciences}.
\newblock Springer-Verlag, New York, 1986.

\bibitem{MMS16}
C.~Mehl, V.~Mehrmann, and P.~Sharma.
\newblock Stability radii for linear {H}amiltonian systems with dissipation
  under structure-preserving perturbations.
\newblock {\em SIAM J. Matrix Anal. Appl.}, 37(4):1625--1654, 2016.

\bibitem{MehMW18}
C.~Mehl, V.~Mehrmann, and M.~Wojtylak.
\newblock Linear algebra properties of dissipative {H}amiltonian descriptor
  systems.
\newblock {\em SIAM J. Matrix Anal. Appl.}, 39(3):1489--1519, 2018.

\bibitem{MehMW20}
C.~Mehl, V.~Mehrmann, and M.~Wojtylak.
\newblock Distance problems for dissipative {H}amiltonian systems and related
  matrix polynomials.
\newblock {\em Linear Algebra Appl.}, 2020.

\bibitem{MehV20a}
V.~Mehrmann and P.~{Van Dooren}.
\newblock Optimal robustness of discrete-time passive systems.
\newblock {\em IMA J. of Math. Control and Inf.}, 37:1248--1269, 2020.

\bibitem{Sta03}
O.~Staffans.
\newblock Passive and conservative infinite-dimensional impedance and
  scattering systems (from a personal point of view).
\newblock In {\em In Mathematical systems theory in biology, communications,
  computation, and finance (Notre Dame, IN, 2002), volume 134 of IMA Vol. Math.
  Appl.}, pages 375--413, New York, 2005. Springer.

\bibitem{Sta05}
O.~Staffans.
\newblock {\em Well-posed linear systems}, volume 103.
\newblock Cambridge University Press, 2005.

\bibitem{St75}
T.~Str\"{o}m.
\newblock On logarithmic norms.
\newblock {\em SIAM J. Numer. Anal.}, 12(5):741--753, 1975.

\bibitem{NagFK10}
B.~Sz.-Nagy, C.~Foias, H.~Bercovici, and L.~K\'{e}rchy.
\newblock {\em Harmonic analysis of operators on {H}ilbert space}.
\newblock Universitext. Springer, New York, second edition, 2010.

\bibitem{vL77}
C.~Van~Loan.
\newblock The sensitivity of the matrix exponential.
\newblock {\em SIAM J. Numer. Anal.}, 14(6):971--981, 1977.

\bibitem{Vi09}
C.~Villani.
\newblock Hypocoercivity.
\newblock {\em Mem. Amer. Math. Soc.}, 202(950):iv+141, 2009.

\end{thebibliography}
\end{document}